\newtheorem{theoremcounter}{Theorem Counter}[section]
\theoremstyle{definition}
\newtheorem{definition}[theoremcounter]{Definition}
\newtheorem{example}[theoremcounter]{Example}
\theoremstyle{plain}
\newtheorem{lemma}[theoremcounter]{Lemma}
\newtheorem{proposition}[theoremcounter]{Proposition}
\newtheorem{corollary}[theoremcounter]{Corollary}
\newtheorem{theorem}[theoremcounter]{Theorem}
\numberwithin{equation}{section}
\newcommand{\Z}{\mathbb{Z}}
\newcommand{\Q}{\mathbb{Q}}
\newcommand{\R}{\mathbb{R}}
\newcommand{\C}{\mathbb{C}}
\newcommand{\bbH}{\mathbb{H}}
\DeclareMathOperator{\ImNew}{Im}
\renewcommand{\Im}{\ImNew}
\DeclareMathOperator{\ReNew}{Re}
\renewcommand{\Re}{\ReNew}
\DeclareMathOperator{\sgn}{sgn}
\DeclareMathOperator{\vol}{vol}
\newcommand{\pmat}[1]{\begin{pmatrix}#1\end{pmatrix}}
\newcommand{\smat}[1]{\bigl(\begin{smallmatrix}#1\end{smallmatrix}\bigr)}
\def\st#1#2{\genfrac{[}{]}{0pt}{}{#1}{#2}}
\begin{document}

\title[]{Hikami's observations on unified WRT invariants\\
and false theta functions} 

\author{Toshiki Matsusaka}
\address{Faculty of Mathematics, Kyushu University,
Motooka 744, Nishi-ku, Fukuoka 819-0395, Japan}
\email{matsusaka@math.kyushu-u.ac.jp} 

\subjclass[2020]{Primary 11F27; Secondary 57K16}



\maketitle

\emph{Dedicated to the memory of Toshie Takata.}

\begin{abstract}
	The object of this article is a family of $q$-series originating from Habiro's work on the Witten--Reshetikhin--Turaev invariants. The $q$-series usually make sense only when $q$ is a root of unity, but for some instances, it also determines a holomorphic function on the open unit disc. Such an example is Habiro's unified WRT invariant $H(q)$ for the Poincar\'{e} homology sphere. In 2007, Hikami observed its discontinuity at roots of unity. More precisely, the value of $H(\zeta)$ at a root of unity is $1/2$ times the limit value of $H(q)$ as $q$ tends towards $\zeta$ radially within the unit disc. In this article, we explain the appearance of the $1/2$-factor and generalize Hikami's observations by using Bailey's lemma and the theory of false theta functions.
\end{abstract}

\setcounter{tocdepth}{2}
\tableofcontents

\section{Introduction}

The WRT invariants are derived from the work of Witten~\cite{Witten1989} and Reshetikhin--Turaev~\cite{ReshetikhinTuraev1991}. Witten answered Atiyah's question on a $3$-dimensional definition of the Jones polynomials of knot theory and introduced certain invariants of $3$-manifolds using quantum field theory. Its rigorous mathematical definition was subsequently given by Reshetikhin and Turaev using the quantum group $U_q(sl_2)$ at roots of unity and has been extensively investigated.

Here is one example. The WRT-invariant $\tau_N(\Sigma(2,3,5))$ associated to the Poincar\'{e} homology sphere $M = \Sigma(2,3,5)$ is computed as
\begin{align*}
	e^{\frac{2\pi i}{N}\frac{121}{120}} (e^{\frac{2\pi i}{N}} - 1) \tau_N(\Sigma(2,3,5)) = \frac{e^{\pi i/4}}{2\sqrt{60N}} \sum_{\substack{0 \leq n \leq 60N-1 \\ N \nmid n}} e^{-\frac{\pi i n^2}{60N}} \frac{\prod_{j=1}^3 (e^{\frac{\pi in}{Np_j}} - e^{-\frac{\pi in}{Np_j}})}{e^{\frac{\pi in}{N}} - e^{-\frac{\pi in}{N}}}
\end{align*}
for $N \in \Z_{>1}$, where $(p_1, p_2, p_3) = (2,3,5)$ (see Lawrence--Rozansky~\cite{LawrenceRozansky1999} and Hikami~\cite{Hikami2005IJM}). One of the topics of research on the WRT invariants is to find a ``unified" function that can capture the values for all $N$. More precisely, we find a function $\tau(M): \mathcal{Z} \to \C$ defined on the set of all roots of unity $\mathcal{Z}$ such that the value $\tau(M)(e^{2\pi i/N})$ coincides with the WRT invariant $\tau_N(M)$. A number-theoretic (or analytic) approach was given by Lawrence--Zagier~\cite{LawrenceZagier1999} using false theta functions. They considered the $q$-series defined by
\[
	\widetilde{\Phi}_{(2,3,5)}^{(1,1,1)} (\tau) = \frac{1}{2} \sum_{n \in \Z} \sgn(n) \chi_{(2,3,5)}^{(1,1,1)}(n) q^{\frac{n^2}{120}} \qquad (q = e^{2\pi i\tau}),
\]
where $q^r = e^{2\pi i r \tau}$ for $r \in \Q$ and $\tau \in \bbH = \{\tau \in \C \mid \Im(\tau) > 0\}$, and 
\[
	\chi_{(2,3,5)}^{(1,1,1)} (n) = \begin{cases}
		1 &\text{if } n \equiv 31, 41, 49, 59 \pmod{60},\\
		-1 &\text{if } n \equiv 1, 11, 19, 29 \pmod{60},\\
		0 &\text{if otherwise}.
	\end{cases}
\]
Then they showed that
\[
	\lim_{t \to 0} \widetilde{\Phi}_{(2,3,5)}^{(1,1,1)} \left(\frac{1}{N} + it \right) = -\frac{1}{60N} \sum_{n=1}^{60N} n \chi_{(2,3,5)}^{(1,1,1)}(n) e^{\pi i \frac{n^2}{60N}}
\]
and
\[
	-\frac{e^{-\frac{\pi i}{60N}}}{2} \lim_{t \to 0} \widetilde{\Phi}_{(2,3,5)}^{(1,1,1)} \left(\frac{1}{N} + it \right) = 1 + e^{\frac{2\pi i}{N}} (1-e^{\frac{2\pi i}{N}}) \tau_N(\Sigma(2,3,5)).
\]
In this sense, the $q$-series $\widetilde{\Phi}_{(2,3,5)}^{(1,1,1)}(\tau)$ unifies the WRT-invariants via the limits to the roots of unity.

Another approach is developed by Habiro~\cite{Habiro2008}. Habiro constructed the unified WRT invariant $I_M(q)$ for the integral homology spheres $M$ with values in the set so-called ``Habiro ring" today. For instance, the unified WRT invariant $I_{\Sigma(2,3,5)}(q)$ he constructed is given by
\begin{align}\label{Habiro-235}
	H(q) := 1 + q(1-q) I_{\Sigma(2,3,5)}(q) = \sum_{n=0}^\infty q^n (q^n)_n,
\end{align}
where $(x)_n = (x; q)_n = \prod_{k=0}^{n-1} (1-xq^k)$ is the usual $q$-Pochhammer symbol. A characteristic of this type of series expression is that although an infinite sum defines it, substituting roots of unity for $q$ truncates the sum to a finite sum. Series with such properties were observed before Habiro. A few famous examples are Kontsevich's function $F(q) = \sum_{n=0}^\infty (q)_n$ studied in~\cite{Zagier2001} and Ramanujan's function $\sigma(q) = 1 + \sum_{n=1}^\infty (-1)^{n-1} q^n (q)_{n-1}$ discovered by Andrews~\cite{Andrews1986} and studied in~\cite{AndrewsDysonHickerson1988, Cohen1988}. In this case, $I_{\Sigma(2,3,5)}(e^{2\pi i/N}) = \tau_N(\Sigma(2,3,5))$, that is,
\[
	H(\zeta) = \sum_{n=0}^\infty \zeta^n (\zeta^n)_n = 1 + \zeta(1-\zeta) \tau_N(\Sigma(2,3,5))
\]
holds for $\zeta = e^{2\pi i/N}$.

Now we have two ways to unify the WRT invariants. Is there any direct relationship between them? First, it is worth noting that, by the term $q^n$ in the sum, Habiro's series in \eqref{Habiro-235} can be viewed as an element in $\Z[[q]]$, which is a feature not found in Kontsevich's function $F(q)$. Then, Hikami~\cite{Hikami2007} addressed this question and succeeded in showing the direct equation
\[
	H(q) = - q^{-\frac{1}{120}} \widetilde{\Phi}_{(2,3,5)}^{(1,1,1)}(\tau)
\]
as a holomorphic function on $|q| < 1$. However, we notice a strange phenomenon. By the above results, we see that
\begin{align}\label{Strange}
	H(e^{2\pi i/N}) = - \frac{1}{2} \lim_{\substack{\tau = 1/N + it \\ t \to 0}} q^{-\frac{1}{120}} \widetilde{\Phi}_{(2,3,5)}^{(1,1,1)}(\tau) = \frac{1}{2} \lim_{\substack{q = e^{2\pi i/N} e^{-t} \\ t \to 0}} H(q).
\end{align}
The mystery of the $1/2$-factor was pointed out by Habiro~\cite[Section 16]{Habiro2008}.

\subsection{Main results}

The article aims to generalize the relation between Habiro-type series and false theta functions studied by Hikami~\cite{Hikami2007} and provide a plausible explanation for the appearance of the $1/2$-factor. First, we review Hikami's results and observations.

For more general Brieskorn homology spheres $\Sigma(2,3,6p-1)$, Hikami explicitly expressed Habiro's unified WRT invariants as follows. For any integer $p > 1$, we have
\begin{align}\label{geometric-Habiro}
	H_p^{(1)}(q) := (1-q) I_{\Sigma(2,3,6p-1)}(q) = \sum_{s_p \geq \cdots \geq s_1 \geq 0} q^{s_p} (q^{s_p+1})_{s_p+1} \prod_{i=1}^{p-1} q^{s_i(s_i+1)} \st{s_{i+1}}{s_i}_q,
\end{align}
where $\st{\cdot}{\cdot}_q$ is the $q$-binomial coefficient defined by
\[
	\st{n}{k}_q = \frac{(q)_n}{(q)_k (q)_{n-k}}.
\]
Then substituting $q = e^{2\pi i/N}$ truncates the infinite sum defining the unified WRT invariant to a finite sum and $I_{\Sigma(2,3,6p-1)}(e^{2\pi i/N}) = \tau_N(\Sigma(2,3,6p-1))$ holds. On the other hand, Hikami~\cite{Hikami2005IJM} generalized Lawrence--Zagier's $q$-series as
\begin{align*}
	\widetilde{\Phi}_{(p_1, p_2, p_3)}^{(\ell_1,\ell_2,\ell_3)}(\tau) &= \frac{1}{2} \sum_{n \in \Z} \sgn(n)\chi_{(p_1,p_2,p_3)}^{(\ell_1,\ell_2, \ell_3)}(n) q^{\frac{n^2}{4p_1p_2p_3}}
\end{align*}
with a periodic function $\chi_{(p_1,p_2,p_3)}^{(\ell_1,\ell_2, \ell_3)}: \Z/2p_1p_2p_3\Z \to \{-1,0,1\}$, which we define later in \eqref{period-chi}. Then he showed that
\begin{align}\label{Hikami-Habiro-eq}
\begin{split}
	- \frac{1}{2} \lim_{\tau \to 1/N} q^{-\frac{(6p+5)^2}{24(6p-1)}} \widetilde{\Phi}_{(2,3,6p-1)}^{(1,1,1)}(\tau) &= (1-e^{2\pi i/N}) \tau_N(\Sigma(2,3,6p-1))\\
		&= H_p^{(1)}(e^{2\pi i/N})
\end{split}
\end{align}
for any $p > 1$. Here the limit is along the vertical line $\tau = 1/N + it$ as before. Similarly below, we will consider the vertical limit $\tau = 1/N + it \to 1/N$ or the radial limit $q = e^{2\pi i/N} e^{-t} \to e^{2\pi i/N}$ as limits. To observe a similarity to \eqref{Strange}, we are interested in comparing the limit 
\[
	\lim_{q \to e^{2\pi i/N}} H_p^{(1)}(q)
\]
from within the unit disc $|q| < 1$ and the value $H_p^{(1)}(e^{2\pi i/N})$ given in \eqref{Hikami-Habiro-eq}. In this case, however, numerical calculations show that the difference is no longer a constant multiple. More specifically, when $q$ tends to a root of unity from within the unit disc, we observe a divergence of $H_p^{(1)}(q)$. Our main theorem claims that the ``convergent part" of $H_p^{(1)}(q)$ converges to the value in \eqref{Hikami-Habiro-eq}.

\begin{theorem}[The precise statement is given in \cref{Habiro-false} and \cref{main}]\label{Intro-main}
	For any integer $p > 1$, as a holomorphic function on the open unit disc, the series has the expression
	\[
		H_p^{(1)}(q) = \frac{q^{-\frac{(6p+5)^2}{24(6p-1)}}}{2\eta(\tau)} \sum_{\bm{\varepsilon} = \smat{\varepsilon_1 \\ \varepsilon_2} \in \{0,1\}^2} (-1)^{\varepsilon_1 + \varepsilon_2} \bigg(\widetilde{\theta}_{\bm{\mu_1}+\bm{\varepsilon}, \bm{c_1}}^{(2)}(\tau) + \widetilde{\theta}_{\bm{\mu_1}+\bm{\varepsilon}, \bm{c_2}}^{(2)}(\tau) \bigg)
	\]
	in terms of false theta functions $\widetilde{\theta}_{\bm{\mu}+\bm{\varepsilon}, \bm{c}}^{(2)}(\tau)$ defined in \eqref{2-false-def} and the Dedekind eta function $\eta(\tau) = q^{1/24} (q)_\infty$. Then the first half converges in the vertical limit $\tau \to 1/N$ to
	\[
		\lim_{\tau \to 1/N} \frac{q^{-\frac{(6p+5)^2}{24(6p-1)}}}{2\eta(\tau)} \sum_{\bm{\varepsilon} \in \{0,1\}^2} (-1)^{\varepsilon_1 + \varepsilon_2} \widetilde{\theta}_{\bm{\mu_1}+\bm{\varepsilon}, \bm{c_1}}^{(2)}(\tau) = -\frac{1}{2} \lim_{\tau \to 1/N} q^{-\frac{(6p+5)^2}{24(6p-1)}} \widetilde{\Phi}_{(2,3,6p-1)}^{(1,1,1)} (\tau),
	\]
	which coincides with the value of $H_p^{(1)}(q)$ at $q = e^{2\pi i/N}$. The second half of the expression diverges in the same limit generally.
\end{theorem}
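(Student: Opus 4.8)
The strategy has three stages: (i) collapse Hikami's nested sum \eqref{geometric-Habiro} to a single exponential sum by iterating Bailey's lemma; (ii) recognise that sum as an $\eta$-quotient times a combination of depth-two false theta functions; (iii) read off the limit $\tau\to 1/N$ of each false theta piece by Euler--Maclaurin summation. For (i), observe that the summand of \eqref{geometric-Habiro} --- a product of $q$-binomial coefficients $\st{s_{i+1}}{s_i}_q$ weighted by the quadratic powers $q^{s_i(s_i+1)}$ --- is exactly the output of iterating the Bailey chain $p-1$ times. I would therefore fix the appropriate seed Bailey pair $(\alpha_n,\beta_n)$ relative to $a=q$, run the chain so that $\beta_n^{(p-1)}$ reproduces the inner $(p-1)$-fold sum, insert the outermost weight $q^{s_p}(q^{s_p+1})_{s_p+1}$, and then use the defining relation $\beta_n=\sum_{r\le n}\alpha_r/((q)_{n-r}(aq)_{n+r})$ together with a standard $q$-hypergeometric summation to collapse everything to a single sum $H_p^{(1)}(q)=\frac{1}{(q)_\infty}\sum_{n\in\Z}\chi(n)\,q^{Q(n)}$, with $\chi$ an explicit periodic coefficient and $Q$ a rational quadratic polynomial of leading coefficient $\frac{1}{24(6p-1)}$ --- the normalisation of $\widetilde\Phi_{(2,3,6p-1)}^{(1,1,1)}$. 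The points needing care are the bookkeeping of the total power of $q$, which produces $q^{-(6p+5)^2/(24(6p-1))}$, and of the overall numerical constant.

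For (ii), once $Q$ is put in a suitable form over a rank-two lattice the coefficient $\chi$ carries $\sgn$-factors in two variables. Applying in each variable the elementary identity that writes $\sgn$ as an alternating combination of truncated sums, I rewrite $\sum_n\chi(n)q^{Q(n)}$ as an alternating sum over $\bm\varepsilon=\smat{\varepsilon_1\\\varepsilon_2}\in\{0,1\}^2$, with sign $(-1)^{\varepsilon_1+\varepsilon_2}$, of the depth-two false theta functions $\widetilde\theta^{(2)}_{\bm\mu_1+\bm\varepsilon,\bm c}(\tau)$ of \eqref{2-false-def}; the two shift vectors $\bm c_1,\bm c_2$ arise from the two natural cone decompositions appearing in that definition. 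Dividing by $(q)_\infty$ and absorbing $q^{1/24}$ into $\eta(\tau)=q^{1/24}(q)_\infty$ gives the displayed identity, valid first as holomorphic functions on $|q|<1$ and hence, since $H_p^{(1)}(q)$ is manifestly in $\Z[[q]]$, as an identity of power series.

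For (iii), I would apply the Euler--Maclaurin summation formula (iteratively in the two lattice directions) to each $\widetilde\theta^{(2)}_{\bm\mu_1+\bm\varepsilon,\bm c}$ as $\tau=1/N+it\to 1/N$, expecting the two chambers to behave quite differently. In the $\bm c_1$-chamber, summing one lattice direction should collapse the alternating combination $\sum_{\bm\varepsilon}(-1)^{\varepsilon_1+\varepsilon_2}\widetilde\theta^{(2)}_{\bm\mu_1+\bm\varepsilon,\bm c_1}$ to a depth-one object, which a product-of-thetas calculation identifies with $-\eta(\tau)\,\widetilde\Phi_{(2,3,6p-1)}^{(1,1,1)}(\tau)$; its contribution is then $-\frac12 q^{-(6p+5)^2/(24(6p-1))}\widetilde\Phi_{(2,3,6p-1)}^{(1,1,1)}(\tau)$, whose vertical limit at $1/N$ exists by Lawrence--Zagier's classical computation and equals $H_p^{(1)}(e^{2\pi i/N})$ by \eqref{Hikami-Habiro-eq}. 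In the $\bm c_2$-chamber the analogous reduction leaves an unbalanced periodic character, so the Euler--Maclaurin expansion generically contains a term growing like a positive power of $1/t$ (of the $\sum q^{n^2}$-type); hence this half diverges, and since the $\bm c_1$-half already supplies the full value at the root of unity, its divergence is precisely the radial blow-up of $H_p^{(1)}(q)$ observed numerically.

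The main obstacle is stage (iii): depth-two false theta functions have no clean modular transformation law, so their behaviour at $1/N$ must be extracted by hand, and the essential point is to establish the chamber dichotomy --- that the $\bm c_1$-part of the alternating combination reduces to the depth-one false theta function $\widetilde\Phi_{(2,3,6p-1)}^{(1,1,1)}$ (and so inherits a finite limit, matching \eqref{Hikami-Habiro-eq}) while the $\bm c_2$-part genuinely does not. By contrast, stage (i) is a routine, if lengthy, iteration of Bailey's lemma once the correct seed pair is identified, and stage (ii) is the standard $\sgn$-splitting device combined with bookkeeping of the periodic character $\chi^{(1,1,1)}_{(2,3,6p-1)}$.
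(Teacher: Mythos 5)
Your proposal follows essentially the same route as the paper: Bailey chains plus the Fine summation (\cref{Fine}) yield the Hecke-type expansion of \cref{Hikami-Theorem3.5}, the cone sum is repackaged via the two vectors $\bm{c_1},\bm{c_2}$ into the alternating $\bm{\varepsilon}$-sum of two-dimensional false theta functions (\cref{Hecke-false}), and the chamber dichotomy you describe is exactly the paper's mechanism --- the $\bm{c_1}$-half collapses, via the decomposition of \cref{false-decompose1} and the fact that $\Theta^{(1)}_{12,\mu}\in\{0,\pm\eta\}$ (Euler's pentagonal number theorem), to a combination of unary false theta functions identified with $-\widetilde{\Phi}^{(1,1,1)}_{(2,3,6p-1)}$, whose vertical limit exists by the Lawrence--Zagier $L$-series asymptotics and equals $H_p^{(1)}(e^{2\pi i/N})$ by \eqref{Hikami-Habiro-eq}, while the $\bm{c_2}$-half involves $\Theta^{(1)}_{4(2p+1),\mu}/\eta$, which generically blows up. Only two details of your write-up need correcting: the intermediate Hecke-type expression is a genuinely two-dimensional sum $\bigl(\sum_{a,b\ge 0}-\sum_{a,b<0}\bigr)$ with the positive definite binary form $\left(p+\frac{1}{2}\right)a^2+2ab+\frac{3}{2}b^2$, not a one-variable series $\sum_{n\in\Z}\chi(n)q^{Q(n)}$ (a unary false theta function divided by $(q)_\infty$ would have no divergent half at all), and the divergence of the $\bm{c_2}$-part is exponential in $1/t$, driven by $1/\eta(1/N+it)$, rather than a power of $1/t$.
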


As for $p=1$, the function $H(q)$ given in \eqref{Habiro-235} is denoted by $H_1^{(2)}(q)$ in the following general notations. The series also has a similar expression
\[
	H_1^{(2)}(q) = \sum_{n=0}^\infty q^n(q^n)_n = \frac{q^{-\frac{1}{120}}}{2\eta(\tau)} \sum_{\bm{\varepsilon} \in \{0,1\}^2} (-1)^{\varepsilon_1 + \varepsilon_2} \bigg(\widetilde{\theta}_{\bm{\mu_2}+\bm{\varepsilon}, \bm{c_1}}^{(2)} (\tau) + \widetilde{\theta}_{\bm{\mu_2}+\bm{\varepsilon}, \bm{c_2}}^{(2)} (\tau) \bigg),
\]
and the first half converges to the value $H_1^{(2)}(e^{2\pi i/N})$ in the limit $\tau \to 1/N$. Furthermore, in this case, the first and second terms accidentally coincide. This fact follows from the symmetry of $a$ and $b$ in the expression given in \cref{Habiro-2}. Thus the limit of the whole $H_1^{(2)}(q)$ also converges, and its limit equals $2H_1^{(2)}(e^{2\pi i/N})$. That is a reason for the occurrence of the $1/2$-factor in \eqref{Strange}.

Hikami~\cite{Hikami2007} also gave many observations on the relations between other Habiro-type series and the limits of $\widetilde{\Phi}_{(2,3,6p-1)}^{(\ell_1, \ell_2, \ell_3)}(\tau)$. More precisely, he introduced another infinite family of Habiro-type series $H_p^{(5)}(q)$ and three more examples $H_2^{(2)}(q), H_2^{(3)}(q)$, and $H_2^{(4)}(q)$ in the following notations. Here we generalize Hikami's examples to five infinite families.

\begin{definition}\label{Habiro-elements}
	For any positive integer $p \geq 1$, we define five Habiro-type series by
	\allowdisplaybreaks[1]
	\begin{align*}
		H_p^{(1)}(q) &= \sum_{s_p \geq \cdots \geq s_1 \geq 0} q^{s_p} (q^{s_p+1})_{s_p+1} \prod_{i=1}^{p-1} q^{s_i(s_i+1)} \st{s_{i+1}}{s_i}_q,\\
		H_p^{(2)}(q) &= \sum_{s_p \geq \cdots \geq s_1 \geq 0} q^{s_p} (q^{s_p})_{s_p} \prod_{i=1}^{p-1} q^{s_i^2} \st{s_{i+1}}{s_i}_q,\\
		H_p^{(3)}(q) &= \sum_{s_p \geq \cdots \geq s_1 \geq 0} q^{2s_p} (q^{s_p+1})_{s_p} \prod_{i=1}^{p-1} q^{s_i(s_i+1)} \st{s_{i+1}}{s_i}_q,\\
		H_p^{(4)}(q) &= \sum_{s_p \geq \cdots \geq s_1 \geq 0} q^{s_p} (q^{s_p+1})_{s_p} \prod_{i=1}^{p-1} q^{s_i(s_i+1)} \st{s_{i+1}}{s_i}_q,\\
		H_p^{(5)}(q) &= \sum_{s_p \geq \cdots \geq s_1 \geq 0} q^{s_p} (q^{s_p+1})_{s_p} \prod_{i=1}^{p-1} q^{s_i^2} \st{s_{i+1}}{s_i}_q.
	\end{align*}
\end{definition}

If the notations are to match those adapted by the spirit of Hikami~\cite{Hikami2007}, then the above series should be named $H_p^{(1)}(q) = M_p^{(1)}(q)$, $H_p^{(2)}(q) = M_p^{(p)}(q)$, $H_p^{(3)}(q) = M_p^{(2p-1)}(q)$, $H_p^{(4)}(q) = M_p^{(2p)}(q)$, and $H_p^{(5)}(q) = M_p^{(3p-1)}(q)$. However, since the superscripts overlap when $p=1$, the notations here are purposely changed. These five series are infinite families that extend each of Hikami's $M_2^{(k)}(q)$ for $k=1,2,3,4,5$. Moreover, $H_1^{(2)}(q) = M_1^{(1)}(q)$ and $H_1^{(4)}(q) = H_1^{(5)}(q) = M_1^{(2)}(q)$ hold in Hikami's notations.

Our main theorems stated in \cref{Habiro-false} and \cref{main} give similar expressions in terms of false theta functions and limit formulas of these five families as in \cref{Intro-main}. For instance, we have
\begin{align*}
	\lim_{\tau \to 1/N} \text{convergent part of }H_2^{(2)}(q) &= -\frac{1}{2} \lim_{\tau \to 1/N} q^{-\frac{1}{264}} \widetilde{\Phi}_{(2,3,11)}^{(1,1,2)}(\tau)\\
		&= \frac{e^{-\frac{2\pi i}{264N}}}{264N} \sum_{n=1}^{132N} n \chi_{(2,3,11)}^{(1,1,2)}(n) e^{\pi i \frac{n^2}{132N}},
\end{align*}
where
\[
	\chi_{(2,3,11)}^{(1,1,2)} (n) = \begin{cases}
		1 &\text{if } n \equiv 67,89,109,131 \pmod{132},\\
		-1 &\text{if } n \equiv 1, 23,43,65 \pmod{132},\\
		0 &\text{if otherwise}.
	\end{cases}
\]
Moreover, numerical calculations suggest that the above limit value coincides with the value $H_2^{(2)}(e^{2\pi i/N})$, that is,
\begin{align}
	H_2^{(2)}(e^{2\pi i/N}) = \frac{e^{-\frac{2\pi i}{264N}}}{264N} \sum_{n=1}^{132N} n \chi_{(2,3,11)}^{(1,1,2)}(n) e^{\pi i \frac{n^2}{132N}}
\end{align}
holds. The similarity with \cref{Intro-main} leads us to expect the coincidence to hold, but it is a conjecture. For other cases, too, Hikami~\cite[Conjectures 1--3]{Hikami2007} conjectured the coincidence between the limits of $\widetilde{\Phi}_{(2,3,6p-1)}^{(\ell_1, \ell_2, \ell_3)}(\tau)$ and the values of Habiro-type series through numerical calculations, but they are still open problems.

To conclude this introduction section, we introduce some related studies. First, Hikami also studied the unified WRT invariants for the Brieskorn homology spheres $\Sigma(2,3,6p+1)$ with $p \geq 1$, but we do not deal with the cases in this article. Second, the above conjecture for $H_p^{(1)}(q)$,
\[
	\bigg(\lim_{\tau \to 1/N} \text{convergent part of } H_p^{(1)}(q) \bigg) = \bigg( \text{the value of } H_p^{(1)}(q) \text{ at } q = e^{2\pi i/N} \bigg)
\]
proved in \cref{Intro-main}, is derived from the fact that both sides have the topological interpretations \eqref{geometric-Habiro} and \eqref{Hikami-Habiro-eq}, namely begin the WRT invariants. On the other hand, the Habiro-type series defined in \cref{Habiro-elements} are found by numerical experiments so that the analogy of \cref{Intro-main} holds, and so far, its roles in the theory of WRT invariants are unclear. Third, many other known methods exist to unify the WRT invariants for more general $3$-manifolds. For instance, Hikami~\cite{Hikami2006JMP} further generalized Lawrence--Zagier's series $\widetilde{\Phi}_{\bm
{p}}^{\bm{\ell}}(\tau)$ for the Seifert fibered homology $3$-spheres with $n$-singular fibers. More recently, Gukov--Pei--Putrov--Vafa~\cite{GPPV2020} introduced $q$-series called homological blocks for any plumbed $3$-manifolds associated with negative definite plumbing tree graphs based on Gukov--Putrov--Vafa~\cite{GukovPutrovVafa2017}. Andersen--Misteg\aa rd~\cite{AM2022} and Fuji--Iwaki--H. Murakami--Terashima~\cite{FIMT2021} independently studied the limit of the homological blocks at roots of unity in different contexts and showed that the homological blocks also unify the WRT invariants for Seifert fibered integral homology $3$-spheres. As for other manifolds, Mori--Y. Murakami~\cite{MoriMurakami2022} dealt with the case for the $\mathrm{H}$-graph, and Y. Murakami~\cite{Murakami2022+} extended it to more general cases. Furthermore, the modular transformation theory for the homological blocks is developing by Bringmann--Mahlburg--Milas~\cite{BMM2020}, Bringmann--Kaszian--Milas--Nazaroglu~\cite{BKMN2023}, and Matsusaka--Terashima~\cite{MatsusakaTerashima2021} et al.

This article is organized as follows. In \cref{s2}, we give Hecke-type series expressions of the five families of the Habiro-type series. This expression yields the relation between the Habiro-type series and the false theta functions. Since the key to the proof is Bailey's work on the Rogers--Ramanujan identities, we begin by reviewing it in the first half of \cref{s2}. In \cref{s3}, we introduce the notion of the false theta functions based on the recent work of Bringmann--Nazaroglu~\cite{BringmannNazaroglu2019}. Then, under this setting, we review Hikami's work~\cite{Hikami2005IJM} on the function $\widetilde{\Phi}_{\bm{p}}^{\bm{\ell}}(\tau)$. In \cref{s3-2}, we give our first main theorem (\cref{Habiro-false}) on the expressions of the Habiro-type series in terms of the false theta functions $\widetilde{\theta}_{\bm{\mu}, \bm{c}}^{(2)}(\tau)$. The transformation called ``false theta decomposition" decomposes the false theta functions into a sum of products of the (lower-dimensional) false theta functions $\widetilde{\theta}_{M, \mu}^{(1)}(\tau)$ and the ordinary theta functions $\theta_{M,\mu}^{(1)}(\tau)$ (\cref{false-decompose1}). This decomposition allows us to calculate the limit of Habiro-type series at roots of unity and obtain our second main theorem (\cref{main}). Finally, we revisit Hikami's question on the modular transformation theory of the Hecke-type series related to the Habiro-type series.

\section{Hecke-type formulas}\label{s2}

This section aims to transform the five Habiro-type series defined in \cref{Habiro-elements} into a Hecke-type series. Since the basic idea is based on Bailey's lemma, developed by Andrews, we review it first. Then, as an application of Bailey's lemma, we show five critical identities related to the Habiro-type series in \cref{p-Bailey-result} and a series of lemmas. In \cref{s2-3}, we derive the desired Hecke-type expressions. Finally, in \cref{s2-4}, although off-topic, we remark on a well-known equation of multiple zeta values derived from Bailey's transform.

\subsection{Bailey's lemma}

Bailey's lemma has a long history, dating back to Bailey's work~\cite{Bailey1947, Bailey1948} in the 1940s, which clarifies the structure of Rogers' second proof of the Rogers--Ramanujan identities. The original idea of Bailey is simple but has several powerful applications. For example, Andrews~\cite{Andrews1986TAMS} found Hecke-type formulas of Ramanujan's mock theta functions by constructing particular Bailey pairs. This discovery by Andrews led Zwegers~\cite{Zwegers2002} to establish the modular transformation theory of mock theta functions. In this subsection, we recall the claims and ideas of Bailey's transform and Bailey's lemma. Its more detailed and extensive history can be found in Andrews~\cite{Andrews1986AMS}, Warnaar~\cite{Warnaar1999}, and Sills~\cite{Sills2018}.

\begin{lemma}[{Bailey's transform}]\label{Baileytransform}
	If sequences $(\alpha_n)_n, (\beta_n)_n, (\gamma_n)_n, (\delta_n)_n, (u_n)_n$, and $(v_n)_n$ satisfy suitable convergence conditions and the equations
	\begin{align*}
		\beta_n = \sum_{k=0}^n \alpha_k u_{n-k} v_{n+k}, \qquad \gamma_n = \sum_{k=n}^\infty \delta_k u_{k-n} v_{k+n},
	\end{align*}
	then we have
	\[
		\sum_{n=0}^\infty \alpha_n \gamma_n = \sum_{n=0}^\infty \beta_n \delta_n.
	\]
\end{lemma}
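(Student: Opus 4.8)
The plan is to prove the identity by a single interchange of the order of summation, starting from the right-hand side, which is the convenient end because the defining sum for $\beta_n$ is finite. First I would expand $\beta_n$ using its defining relation and regard the result as a sum over the triangular index set $\{(k,n) : 0 \le k \le n < \infty\}$:
\[
	\sum_{n=0}^\infty \beta_n \delta_n = \sum_{n=0}^\infty \delta_n \sum_{k=0}^n \alpha_k u_{n-k} v_{n+k} = \sum_{0 \le k \le n} \alpha_k \delta_n u_{n-k} v_{n+k}.
\]

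Next I would re-sum over $k$ first. The constraint $0 \le k \le n < \infty$ is the same as $0 \le k < \infty$ together with $n \ge k$, so
\[
	\sum_{n=0}^\infty \beta_n \delta_n = \sum_{k=0}^\infty \alpha_k \sum_{n=k}^\infty \delta_n u_{n-k} v_{n+k}.
\]
The inner sum is exactly $\gamma_k$: relabelling the summation variable in the definition $\gamma_n = \sum_{k=n}^\infty \delta_k u_{k-n} v_{k+n}$ and specializing the free index to $k$ gives $\gamma_k = \sum_{m=k}^\infty \delta_m u_{m-k} v_{m+k}$, which is precisely the inner sum above. Hence $\sum_{n=0}^\infty \beta_n \delta_n = \sum_{k=0}^\infty \alpha_k \gamma_k$, which is the asserted equality after renaming $k$ as $n$.

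The only genuine point requiring care is the legitimacy of the interchange of the two summations, and this is exactly what the unstated ``suitable convergence conditions'' are meant to supply: it suffices that the double series $\sum_{0 \le k \le n} |\alpha_k \delta_n u_{n-k} v_{n+k}|$ converges, in which case Fubini--Tonelli for sums (equivalently, Tannery's theorem applied to the partial sums) permits the rearrangement. In every application appearing later in this paper the sequences $u_n$ and $v_n$ are built from $q$-Pochhammer symbols with $|q| < 1$, so the relevant series are either finite or dominated by a convergent geometric series and this hypothesis holds automatically; in a careful writeup I would simply record the absolute-convergence assumption explicitly and invoke it at this step. There is no deeper obstacle — the whole content of the lemma is this bookkeeping, which is why Bailey's transform is at once elementary and remarkably flexible.
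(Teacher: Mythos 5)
Your proof is correct and is exactly the argument the paper has in mind: the paper itself notes that ``the proof is simply an exchange of the order of the sums,'' which is the interchange over the triangular index set $\{(k,n): 0 \le k \le n\}$ that you carry out, with the absolute-convergence hypothesis justifying the rearrangement. Nothing further is needed.
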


The proof is simply an exchange of the order of the sums, where the ``suitable convergence conditions" are required. In particular, let us choose $u_n = 1/(q)_n$ and $v_n = 1/(aq)_n$ with a complex number $a \in \C$. Here $(x)_n = (x; q)_n = \prod_{k=0}^{n-1} (1-xq^k)$ is the usual $q$-Pochhammer symbol with $|q| < 1$. Then the four sequences are required to satisfy the following equations.
\begin{align}\label{Bailey-transform}
	\beta_n = \sum_{k=0}^n \frac{\alpha_k}{(q)_{n-k} (aq)_{n+k}}, \qquad \gamma_n = \sum_{k=n}^\infty \frac{\delta_k}{(q)_{k-n} (aq)_{k+n}}.
\end{align}
A pair of sequences $(\alpha, \beta)$ satisfying the above first equation is called a \emph{Bailey pair relative to $a$}. Similarly, a pair $(\gamma, \delta)$ satisfying the second equation is called a \emph{conjugate Bailey pair relative to $a$}.

In applications, Bailey~\cite[\S.4]{Bailey1948} found the following conjugate Bailey pair $(\gamma, \delta)$.
\begin{lemma}\label{conjugate-Bailey-pair}
	For any $\rho_1, \rho_2 \in \C$ (such that no zeros appear in the denominators) and a non-negative integer $N \geq 0$, a pair of
	\begin{align*}
		\gamma_n &= \frac{(aq/\rho_1)_N (aq/\rho_2)_N}{(aq)_N (aq/\rho_1 \rho_2)_N} \frac{(-1)^n (\rho_1)_n (\rho_2)_n (q^{-N})_n}{(aq/\rho_1)_n (aq/\rho_2)_n (aq^{N+1})_n} \left( \frac{aq}{\rho_1 \rho_2}\right)^n q^{nN - \frac{n(n-1)}{2}},\\
		\delta_n &= \frac{(\rho_1)_n (\rho_2)_n (q^{-N})_n q^n}{(\rho_1 \rho_2 q^{-N}/a)_n}
	\end{align*}
	is a conjugate Bailey pair relative to $a$.
\end{lemma}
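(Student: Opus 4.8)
\emph{Proof proposal.} What must be verified is the second identity in \eqref{Bailey-transform} for the stated sequences, namely that
\[
	\gamma_n = \sum_{k=n}^{\infty} \frac{\delta_k}{(q)_{k-n}\,(aq)_{k+n}} .
\]
First I would note that $\delta_k$ carries the factor $(q^{-N})_k$, which vanishes for $k > N$, so the sum is finite --- hence the convergence hypothesis is automatic --- and both sides vanish when $n > N$, so one may assume $0 \le n \le N$. The plan from here is the classical one going back to Bailey: reduce the sum to a balanced terminating ${}_3\phi_2$ and apply the $q$-Saalsch\"utz summation.

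Concretely, I would write $k = n + m$ and split each $q$-Pochhammer symbol via $(x)_{n+m} = (x)_n\,(xq^n)_m$, together with $(aq)_{2n+m} = (aq)_{2n}\,(aq^{2n+1})_m$. The right-hand side then becomes the $m$-independent prefactor $\dfrac{(\rho_1)_n (\rho_2)_n (q^{-N})_n\, q^n}{(\rho_1\rho_2 q^{-N}/a)_n\,(aq)_{2n}}$ times
\[
	\sum_{m \ge 0} \frac{(\rho_1 q^n)_m\,(\rho_2 q^n)_m\,(q^{n-N})_m}{(q)_m\,(\rho_1\rho_2 q^{n-N}/a)_m\,(aq^{2n+1})_m}\, q^m .
\]
This is a terminating ${}_3\phi_2$ at argument $q$, and it is balanced: the product of its three numerator parameters times $q$ is $\rho_1\rho_2 q^{3n-N+1}$, which equals the product $(\rho_1\rho_2 q^{n-N}/a)(aq^{2n+1})$ of its two denominator parameters. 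So the $q$-Saalsch\"utz theorem sums it; taking the denominator parameter $aq^{2n+1}$ in the role of $c$, the value is $\dfrac{(aq^{n+1}/\rho_1)_{N-n}\,(aq^{n+1}/\rho_2)_{N-n}}{(aq^{2n+1})_{N-n}\,(aq/\rho_1\rho_2)_{N-n}}$.

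It then remains to match this product against the stated $\gamma_n$, which is routine $q$-Pochhammer bookkeeping: $(x)_N/(x)_n = (xq^n)_{N-n}$ turns $(aq^{n+1}/\rho_j)_{N-n}$ into $(aq/\rho_j)_N/(aq/\rho_j)_n$ and $(aq^{2n+1})_{N-n}$ into $(aq)_{N+n}/(aq)_{2n}$, whence the $(aq)_{2n}$ cancels and $(aq)_{N+n}$ splits as $(aq)_N(aq^{N+1})_n$; the remaining factors and the power of $q$ are then reconciled by applying the reversal formula $(x)_n = (-x)^n q^{\binom{n}{2}}(q^{1-n}/x)_n$ to $(\rho_1\rho_2 q^{-N}/a)_n$ and $(x)_N = (x)_{N-n}(xq^{N-n})_n$ to $(aq/\rho_1\rho_2)_N$. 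I expect the only real obstacle to be this last step: one has to track the signs and the powers of $q$ produced by the reversal carefully so that they collapse to precisely the factor $(-1)^n(aq/\rho_1\rho_2)^n q^{nN-\frac{n(n-1)}{2}}$ in the statement. Conceptually everything hinges on the single observation that the inner sum is Saalsch\"utzian.
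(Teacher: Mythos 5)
Your proposal is correct and follows exactly the route the paper takes: the paper's proof simply cites Andrews and notes that the key is the $q$-Saalsch\"utz summation of a terminating balanced ${}_3\phi_2$, which is precisely the reduction you carry out (and your shift $k=n+m$, the balancedness check, and the Saalsch\"utz evaluation are all correct). The final bookkeeping you flag does work out: applying $(x)_n=(-x)^nq^{\binom{n}{2}}(q^{1-n}/x)_n$ to $(\rho_1\rho_2 q^{-N}/a)_n$ produces exactly the factor $(-1)^n(aq/\rho_1\rho_2)^n q^{nN-\frac{n(n-1)}{2}}$, so the proof is complete.
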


\begin{proof}
	The key to the proof is $q$-analogue of the Saalsch\"{u}tz summation formula for the $q$-hypergeometric series ${}_3 \phi_2$. The proof can be found in Andrews~\cite[p.25--27]{Andrews1986AMS}.
\end{proof}

Since $\gamma_n = \delta_n = 0$ for $n>N$, the ``suitable convergence conditions" required in \cref{Baileytransform} is satisfied. The following Bailey's lemma tells us that a Bailey pair $(\alpha, \beta)$ yields a new Bailey pair $(\alpha', \beta')$.

\begin{theorem}[{Bailey's lemma}]
	If $(\alpha, \beta)$ is a Bailey pair relative to $a$, then a pair of
	\begin{align*}
		\alpha'_n &= \frac{(\rho_1)_n (\rho_2)_n \left(\frac{aq}{\rho_1 \rho_2}\right)^n}{(aq/\rho_1)_n (aq/\rho_2)_n} \alpha_n,\\
		\beta'_n &= \sum_{j=0}^n \frac{(\rho_1)_j (\rho_2)_j (aq/\rho_1 \rho_2)_{n-j} \left(\frac{aq}{\rho_1 \rho_2}\right)^j}{(q)_{n-j} (aq/\rho_1)_n (aq/\rho_2)_n} \beta_j
	\end{align*}
	is also a Bailey pair relative to $a$, that is,
	\begin{align*}
		\beta'_n = \sum_{k=0}^n \frac{\alpha'_k}{(q)_{n-k} (aq)_{n+k}}
	\end{align*}
	holds.
\end{theorem}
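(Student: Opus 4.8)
The plan is to verify the defining relation $\beta'_n = \sum_{k=0}^{n} \alpha'_k/\bigl((q)_{n-k}(aq)_{n+k}\bigr)$ directly. For each fixed $n$ every sum that appears below is finite, so no convergence subtlety arises, and it suffices to compare the two sides as linear combinations of the $\alpha_k$ and check that the coefficient of each $\alpha_k$ agrees.

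First I would take the definition of $\beta'_n$ and replace each $\beta_j$ occurring in it by its Bailey-pair expansion $\beta_j = \sum_{k=0}^{j}\alpha_k/\bigl((q)_{j-k}(aq)_{j+k}\bigr)$. This rewrites $\beta'_n$ as a double sum over $0\le k\le j\le n$. Interchanging the order of summation (sum $k$ from $0$ to $n$, then $j$ from $k$ to $n$) and pulling $\alpha_k$ together with all factors independent of $j$ outside the inner sum, the desired identity collapses to the single closed-form evaluation
\[
	\sum_{j=k}^{n} \frac{(\rho_1)_j (\rho_2)_j (aq/\rho_1\rho_2)_{n-j}}{(q)_{n-j}\,(q)_{j-k}\,(aq)_{j+k}}\Bigl(\frac{aq}{\rho_1\rho_2}\Bigr)^{j}
	= \frac{(\rho_1)_k (\rho_2)_k (aq/\rho_1)_n (aq/\rho_2)_n}{(aq/\rho_1)_k (aq/\rho_2)_k\,(q)_{n-k}\,(aq)_{n+k}}\Bigl(\frac{aq}{\rho_1\rho_2}\Bigr)^{k}
\]
for every $0\le k\le n$; once this is known, the coefficient of $\alpha_k$ in the double sum is exactly $\alpha'_k/\bigl((q)_{n-k}(aq)_{n+k}\bigr)$, which is what is wanted.

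To prove the displayed identity I would substitute $j=k+m$ and use the elementary splitting and reflection rules for $q$-Pochhammer symbols (of the type $(x)_{k+m}=(x)_k(xq^k)_m$, and $(x)_{n-k-m}$ expressed through $(x)_{n-k}$ and $(q^{k-n}/x)_m$ up to an explicit power of $q$ and a sign) to recast the sum over $m$ as a terminating ${}_3\phi_2$; one then checks that it is balanced (Saalsch\"{u}tzian) and applies the $q$-Saalsch\"{u}tz summation formula, the same tool invoked in the proof of \cref{conjugate-Bailey-pair}, and simplifies the resulting product of $q$-Pochhammer symbols down to the right-hand side above. I expect this middle step — the reindexing into standard ${}_3\phi_2$ form, the verification of the balancing condition, and the final Pochhammer simplification — to be the main obstacle: it is conceptually routine but is precisely the kind of $q$-series bookkeeping where sign errors and off-by-one index shifts are easy to introduce. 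A slightly less hands-on alternative is to feed the $N\to\infty$ specialization of the conjugate Bailey pair of \cref{conjugate-Bailey-pair} into Bailey's transform (\cref{Baileytransform}), which yields the summed form $\sum_n \alpha_n\gamma_n=\sum_n\beta_n\delta_n$ of Bailey's lemma directly; the iterative statement above is then recovered as the version obtained before passing to the limit in $N$. Either way, the engine is the $q$-Saalsch\"{u}tz summation formula.
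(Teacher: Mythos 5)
Your proposal is correct, and the stated coefficient identity is the right one: expanding $\beta_j$ inside $\beta'_n$, swapping the order of summation, and shifting $j=k+m$ does produce a terminating balanced ${}_3\phi_2$ with upper parameters $\rho_1q^k,\rho_2q^k,q^{-(n-k)}$ and lower parameters $aq^{2k+1},\rho_1\rho_2q^{k-n}/a$ (one checks $q\cdot\rho_1\rho_2q^{2k}\cdot q^{k-n}=aq^{2k+1}\cdot\rho_1\rho_2q^{k-n}/a$, so it is Saalsch\"{u}tzian), and the $q$-Saalsch\"{u}tz evaluation simplifies to the claimed right-hand side. The paper runs the same computation in the opposite direction: it starts from $\sum_k\alpha'_k/\bigl((q)_{N-k}(aq)_{N+k}\bigr)$, recognizes the \emph{finite-$N$} conjugate Bailey pair $(\gamma,\delta)$ of \cref{conjugate-Bailey-pair} (where $q$-Saalsch\"{u}tz is already buried), and invokes Bailey's transform to land on $\sum_k\beta_k\delta_k$, which is then massaged into $\beta'_N$. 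The two arguments are the same $q$-Saalsch\"{u}tz computation packaged differently; yours is more self-contained (a single explicit terminating sum per coefficient, no need to set up $(\gamma,\delta)$ separately), while the paper's factors the work through a reusable lemma. One small correction to your closing remark: the paper does not pass to the $N\to\infty$ limit of the conjugate Bailey pair and then "recover" the finite statement --- it uses the conjugate Bailey pair at finite $N$ directly, where $\gamma_n=\delta_n=0$ for $n>N$ makes Bailey's transform a finite identity whose level-$N$ instance \emph{is} the Bailey pair relation for $\beta'_N$; the $N\to\infty$ limit is what you take later (as in \cref{Bailey-lemma-special} and \cref{Ex:RR-id}), not here.
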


\begin{proof}
	We give a sketch of the proof given in~\cite[p.27]{Andrews1986AMS}. A direct calculation yields
	\begin{align*}
		\sum_{k=0}^N \frac{\alpha'_k}{(q)_{N-k} (aq)_{N+k}} &= \frac{(aq/\rho_1 \rho_2)_N}{(aq/\rho_1)_N (aq/\rho_2)_N (q)_N} \sum_{k=0}^\infty \gamma_k \alpha_k,
	\end{align*}
	where $\gamma_k$ is defined in \cref{conjugate-Bailey-pair} and $\gamma_k=0$ for $k > N$. By \cref{Baileytransform}, the last sum equals $\sum_{k=0}^N \beta_k \delta_k$. A more direct calculation yields the definition of $\beta'_N$.
\end{proof}

For later applications, we will compute the particular case of Bailey's lemma.

\begin{corollary}\label{Bailey-lemma-special}
	If $(\alpha, \beta)$ is a Bailey pair relative to $a$, then a pair of
	\begin{align*}
		\alpha'_n = a^n q^{n^2} \alpha_n, \qquad \beta'_n = \sum_{j=0}^n \frac{a^j q^{j^2}}{(q)_{n-j}} \beta_j
	\end{align*}
	is also a Bailey pair relative to $a$.
\end{corollary}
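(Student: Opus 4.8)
The plan is to obtain \cref{Bailey-lemma-special} as a limiting case of the general Bailey's lemma, taking $\rho_1, \rho_2 \to \infty$ in an appropriate way. First I would examine the effect of letting $\rho_1 \to \infty$ (and then $\rho_2 \to \infty$) on each factor appearing in the formulas for $\alpha'_n$ and $\beta'_n$. For the Pochhammer symbols, one uses the elementary asymptotics that as $\rho \to \infty$,
\[
	(\rho)_j = \prod_{k=0}^{j-1}(1 - \rho q^k) \sim (-\rho)^j q^{\binom{j}{2}},
	\qquad
	\frac{1}{(aq/\rho)_j} \to 1,
\]
since $aq/\rho \to 0$. So in $\alpha'_n$ the prefactor $\dfrac{(\rho_1)_n (\rho_2)_n}{(aq/\rho_1)_n (aq/\rho_2)_n}\left(\dfrac{aq}{\rho_1\rho_2}\right)^n$ behaves like $(-\rho_1)^n q^{\binom{n}{2}} \cdot (-\rho_2)^n q^{\binom{n}{2}} \cdot (aq)^n (\rho_1\rho_2)^{-n}$, and the $\rho_1, \rho_2$ powers cancel, leaving $(aq)^n q^{2\binom{n}{2}} = a^n q^n q^{n^2 - n} = a^n q^{n^2}$, exactly the claimed $\alpha'_n$.

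Next I would carry out the same limit inside the sum defining $\beta'_n$. The summand contains $(\rho_1)_j (\rho_2)_j (aq/\rho_1\rho_2)_{n-j}\left(\tfrac{aq}{\rho_1\rho_2}\right)^j / \big[(q)_{n-j}(aq/\rho_1)_n(aq/\rho_2)_n\big]$. As $\rho_1,\rho_2 \to \infty$: the middle factor $(aq/\rho_1\rho_2)_{n-j} \to 1$; the denominators $(aq/\rho_1)_n, (aq/\rho_2)_n \to 1$; and $(\rho_1)_j(\rho_2)_j\left(\tfrac{aq}{\rho_1\rho_2}\right)^j \sim (-\rho_1)^j q^{\binom{j}{2}} (-\rho_2)^j q^{\binom{j}{2}} (aq)^j (\rho_1\rho_2)^{-j} = (aq)^j q^{2\binom{j}{2}} = a^j q^{j^2}$. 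Hence the summand tends to $\dfrac{a^j q^{j^2}}{(q)_{n-j}} \beta_j$, giving the asserted $\beta'_n$ termwise; since the sum is finite (over $0 \le j \le n$), the termwise limit is the limit of the sum. Finally, the Bailey-pair relation $\beta'_n = \sum_{k=0}^n \alpha'_k / \big[(q)_{n-k}(aq)_{n+k}\big]$ is preserved under the limit: for each fixed $n$ it is a finite identity in the parameters $\rho_1,\rho_2$, true for all admissible values, so it passes to the limit. Alternatively, and perhaps more cleanly, one simply invokes the general Bailey's lemma with $(\alpha',\beta')$ as computed above and notes the relation holds because $(\alpha',\beta')$ is literally the limit of Bailey pairs relative to $a$; but I would prefer to substitute the closed forms into $\sum_{k=0}^n \alpha'_k/[(q)_{n-k}(aq)_{n+k}]$ and recognize it as a known evaluation if a self-contained check is wanted.

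The main obstacle is purely bookkeeping: one must track the powers of $\rho_1$ and $\rho_2$ carefully to confirm they cancel exactly (they do, because $\alpha'$ and each $\beta'$-summand are homogeneous of degree $0$ in $(\rho_1,\rho_2)$ in the relevant sense), and one must be slightly careful that the asymptotic $\sim$ relations can be upgraded to genuine limits after the cancellation — which is automatic here since after cancelling the leading powers the remaining factors like $1/(aq/\rho_i)_n$ converge to $1$ honestly, not just asymptotically. No convergence subtleties arise because every sum involved is finite. So I expect the proof to be a short, direct computation with no real difficulty beyond careful algebra, mirroring the derivation in Andrews~\cite[p.27]{Andrews1986AMS}.
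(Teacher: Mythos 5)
Your proof is correct and is exactly the paper's approach: the paper's proof consists of the single line ``In the definition of $(\alpha'_n, \beta'_n)$, we take a limit as $\rho_1, \rho_2 \to \infty$,'' and your computation simply supplies the routine asymptotics $(\rho)_j \sim (-\rho)^j q^{\binom{j}{2}}$ and the cancellation of $\rho$-powers that make that one-liner work. Nothing is missing; the finiteness of all sums involved justifies the termwise passage to the limit as you note.
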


\begin{proof}
	In the definition of $(\alpha'_n, \beta'_n)$, we take a limit as $\rho_1, \rho_2 \to \infty$.
\end{proof}

\begin{example}\label{Ex:RR-id}
	We explain how Rogers--Ramanujan's identities follow from Bailey's lemma. The most basic example of Bailey pairs (relative to $a$) is the \emph{unit Bailey pair} defined by
	\begin{align}\label{unite-Bailey}
	\begin{split}
		\alpha_n &= \frac{(1-aq^{2n}) (a)_n (-1)^n q^{\frac{n(n-1)}{2}}}{(1-a) (q)_n},\\
		\beta_n &= \delta_{n,0} = \begin{cases}
			1 &\text{if } n=0,\\
			0 &\text{if } n>0
		\end{cases}
	\end{split}
	\end{align}
	(see Andrews~\cite[(2.12) and (2.13)]{Andrews1984PJM}). First, we let $a=1$. By applying \cref{Bailey-lemma-special} twice, we see that a pair of
	\begin{align}
	\begin{split}
		\alpha''_n &= q^{2n^2} \alpha_n = \begin{cases}
			1 &\text{if } n =0,\\
			(-1)^n q^{\frac{n(5n-1)}{2}} (1+q^n) &\text{if } n>0,
		\end{cases}\\
		\beta''_n &= \sum_{j=0}^n \frac{q^{j^2}}{(q)_{n-j}} \sum_{k=0}^j \frac{q^{k^2}}{(q)_{j-k}} \beta_k = \sum_{j=0}^n \frac{q^{j^2}}{(q)_{n-j} (q)_j}
	\end{split}
	\end{align}
	is also a Bailey pair relative to $1$. By the first relation in \eqref{Bailey-transform} and taking a limit as $n \to \infty$, we have
	\begin{align*}
		\sum_{j=0}^\infty \frac{q^{j^2}}{(q)_j} = \frac{1}{(q)_\infty} \sum_{k \in \Z} (-1)^k q^{\frac{k(5k-1)}{2}}.
	\end{align*}
	By Jacobi's triple product
	\begin{align}\label{Jacobi-triple}
		\sum_{n \in \Z} (-1)^n q^{\frac{n(n-1)}{2}} \zeta^n = (q)_\infty (\zeta)_\infty (\zeta^{-1}q)_\infty,
	\end{align}
	we have
	\[
		\sum_{j=0}^\infty \frac{q^{j^2}}{(q)_j} = \frac{(q^5; q^5)_\infty (q^2; q^5)_\infty (q^3; q^5)_\infty}{(q;q)_\infty} = \frac{1}{(q;q^5)_\infty (q^4; q^5)_\infty},
	\]
	which is so-called \emph{Rogers--Ramanujan's first identity}.
	
	Similarly, we let $a=q$ in \eqref{unite-Bailey}. Again, from the twice application of \cref{Bailey-lemma-special} and Jacobi's triple product, we obtain the \emph{second Rogers--Ramanujan identity},
	\[
		\sum_{j=0}^\infty \frac{q^{j(j+1)}}{(q)_j} = \frac{1}{(q)_\infty} \sum_{k \in \Z} (-1)^k q^{\frac{k(5k + 3)}{2}} = \frac{1}{(q^2; q^5)_\infty (q^3; q^5)_\infty}.
	\]
\end{example}

\subsection{Bailey chains related to the Rogers--Ramanujan identities}\label{s2-2}

As seen in \cref{Ex:RR-id}, repeated application of Bailey's lemma yields a sequence of Bailey pairs. We call the sequence a \emph{Bailey chain}. To obtain Hecke-type expansions of the Habiro-type series defined in \cref{Habiro-elements}, we recall two Bailey chains considered by Hikami~\cite{Hikami2007} and show three auxiliary lemmas below.

Let $\st{\cdot}{\cdot}_q$ be the $q$-binomial coefficient defined by
\[
	\st{n}{k}_q = \frac{(q)_n}{(q)_k (q)_{n-k}}.
\]

The first Bailey chain follows from a unit Bailey pair with $a=1$ considered in \cref{Ex:RR-id} related to the first Rogers--Ramanujan identity.

\begin{proposition}\label{Bailey-chain-1}
	For any integer $p \geq 1$, a pair $(\alpha^{(1,p)}, \beta^{(1,p)})$ defined by
	\begin{align*}
		\alpha_n^{(1,p)} &= (-1)^n q^{\left(p+\frac{1}{2}\right)n^2 -\frac{1}{2}n} (1+q^n) - \delta_{n,0},\\
		\beta_n^{(1,p)} &= \frac{1}{(q)_n} \sum_{n = s_p \geq \cdots \geq s_1 \geq 0} \prod_{i=1}^{p-1} q^{s_i^2} \st{s_{i+1}}{s_i}_q
	\end{align*}
	is a Bailey pair relative to $1$, where the empty product is understood as $1$, that is, $\beta_n^{(1,1)} = 1/(q)_n$.
\end{proposition}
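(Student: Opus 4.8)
The plan is to obtain $(\alpha^{(1,p)},\beta^{(1,p)})$ by iterating \cref{Bailey-lemma-special} with $a=1$, starting from the unit Bailey pair of \eqref{unite-Bailey}, and to verify the two explicit formulas simultaneously by induction on $p$.

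First I would record the starting point. Specializing \eqref{unite-Bailey} to $a=1$ (taking the limit $a\to 1$, so that $(a)_n/(1-a)\to(q)_{n-1}$ for $n\geq 1$) gives the unit Bailey pair relative to $1$, namely $\alpha_0=1$, $\alpha_n=(-1)^nq^{n(n-1)/2}(1+q^n)$ for $n\geq 1$, and $\beta_n=\delta_{n,0}$; this is exactly the pair appearing in \cref{Ex:RR-id}. Applying \cref{Bailey-lemma-special} once with $a=1$ replaces $\alpha_n$ by $q^{n^2}\alpha_n$ and $\beta_n$ by $\sum_{j=0}^n q^{j^2}\beta_j/(q)_{n-j}=1/(q)_n$. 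Since $q^{n^2+n(n-1)/2}=q^{(3/2)n^2-n/2}$ and $q^{n^2}\delta_{n,0}=\delta_{n,0}$, the resulting pair is exactly $(\alpha^{(1,1)},\beta^{(1,1)})$, which settles the base case $p=1$ (here $\beta_n^{(1,1)}=1/(q)_n$ is the stated empty-product convention).

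For the inductive step, assume $(\alpha^{(1,p-1)},\beta^{(1,p-1)})$ is a Bailey pair relative to $1$ with the claimed formulas, and apply \cref{Bailey-lemma-special} once more (still with $a=1$). On the $\alpha$-side one gets $\alpha_n^{(1,p)}=q^{n^2}\alpha_n^{(1,p-1)}$; the extra factor $q^{n^2}$ changes the exponent $((p-1)+\tfrac12)n^2-\tfrac12 n$ into $(p+\tfrac12)n^2-\tfrac12 n$ and, since $q^{n^2}$ equals $1$ at $n=0$, leaves the correction term $-\delta_{n,0}$ untouched, which is the desired formula. On the $\beta$-side one gets $\beta_n^{(1,p)}=\sum_{j=0}^n q^{j^2}\beta_j^{(1,p-1)}/(q)_{n-j}$; substituting the induction hypothesis, putting $s_p=n$ and $s_{p-1}=j$, and using the identity $q^{j^2}/((q)_{n-j}(q)_j)=(q^{s_{p-1}^2}/(q)_{s_p})\st{s_p}{s_{p-1}}_q$ (merely the definition of the $q$-binomial coefficient), the factor $1/(q)_j$ produced by $\beta_j^{(1,p-1)}$ is absorbed and the $i=p-1$ term $q^{s_{p-1}^2}\st{s_p}{s_{p-1}}_q$ of the product is created, so that $\beta_n^{(1,p)}=\frac{1}{(q)_n}\sum_{n=s_p\geq\cdots\geq s_1\geq 0}\prod_{i=1}^{p-1}q^{s_i^2}\st{s_{i+1}}{s_i}_q$, completing the induction. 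By \cref{Bailey-lemma-special} each pair in this chain is again a Bailey pair relative to $1$, which is the assertion.

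There is no genuine obstacle here: the statement is a direct unfolding of the Bailey chain of \cref{Bailey-lemma-special}. The only places needing a little care are the exponent bookkeeping $pn^2+n(n-1)/2=(p+\tfrac12)n^2-\tfrac12 n$ and checking that the correction term $\delta_{n,0}$ propagates unchanged through the iteration — which it does, because every new factor introduced is of the form $q^{cn^2}$ and hence trivial at $n=0$.
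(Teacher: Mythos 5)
Your proof is correct and follows essentially the same route as the paper: induction on $p$, iterating \cref{Bailey-lemma-special} with $a=1$ starting from the unit Bailey pair, with the recursions $\alpha_n^{(1,p)}=q^{n^2}\alpha_n^{(1,p-1)}$ and $\beta_n^{(1,p)}=\sum_{j}q^{j^2}\beta_j^{(1,p-1)}/(q)_{n-j}$. Your write-up is in fact more explicit than the paper's sketch, correctly handling the $a\to 1$ limit in the unit Bailey pair, the propagation of the $-\delta_{n,0}$ term, and the $q$-binomial bookkeeping on the $\beta$-side.
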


\begin{proof}
	It follows from induction on $p$. The initial case of $p=1$ is given by applying \cref{Bailey-lemma-special} to the unit Bailey pair with $a=1$. By the definition, we see that
	\begin{align*}
		\alpha_n^{(1,p)} = q^{n^2} \alpha_n^{(1,p-1)}, \qquad \beta_n^{(1,p)} = \sum_{s_{p-1}=0}^n \frac{q^{s_{p-1}^2}}{(q)_{n-s_{p-1}}} \beta_{s_{p-1}}^{(1,p-1)}.
	\end{align*}
	The induction assumption and Bailey's lemma in~\cref{Bailey-lemma-special} imply that $(\alpha^{(1,p)}, \beta^{(1,p)})$ is also a Bailey pair relative to $1$.
\end{proof}

The following second Bailey chain is related to the second Rogers--Ramanujan identity.

\begin{proposition}\label{Bailey-chain-2}
	For any integer $p \geq 1$, a pair $(\alpha^{(2,p)}, \beta^{(2,p)})$ defined by
	\begin{align*}
		\alpha_n^{(2,p)} &= (-1)^n q^{\left(p+\frac{1}{2}\right)n^2 + \left(p-\frac{1}{2}\right)n} \frac{1-q^{2n+1}}{1-q},\\
		\beta_n^{(2,p)} &= \frac{1}{(q)_n} \sum_{n = s_p \geq \cdots \geq s_1 \geq 0} \prod_{i=1}^{p-1} q^{s_i(s_i+1)} \st{s_{i+1}}{s_i}_q
	\end{align*}
	is a Bailey pair relative to $q$.
\end{proposition}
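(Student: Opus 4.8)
The plan is to argue by induction on $p$, in exact parallel with the proof of \cref{Bailey-chain-1}, the only change being that the bottom of the Bailey chain is now the unit Bailey pair relative to $a=q$ from \eqref{unite-Bailey} rather than the one relative to $a=1$.

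For the base case $p=1$, I would take the unit Bailey pair \eqref{unite-Bailey} with $a=q$, in which $(a)_n=(q)_n$ cancels the $(q)_n$ in the denominator, giving $\alpha_n=\dfrac{(1-q^{2n+1})(-1)^n q^{n(n-1)/2}}{1-q}$ and $\beta_n=\delta_{n,0}$. Applying \cref{Bailey-lemma-special} once with $a=q$ sends $\alpha_n\mapsto q^n q^{n^2}\alpha_n=q^{n^2+n}\alpha_n$, whose exponent of $q$ is $n^2+n+\tfrac{n(n-1)}{2}=\tfrac32 n^2+\tfrac12 n$, so the result is exactly $\alpha_n^{(2,1)}$; and it sends $\beta_n\mapsto\sum_{j=0}^n\frac{q^j q^{j^2}}{(q)_{n-j}}\delta_{j,0}=\frac{1}{(q)_n}$, which is $\beta_n^{(2,1)}$ (empty product $=1$). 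Unlike the $a=1$ case, no $-\delta_{n,0}$ correction is needed: at $n=0$ the prefactor $\frac{1-q^{2n+1}}{1-q}$ equals $1$ and the iterated $q$-powers equal $1$, so the closed formula is already valid there.

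For the inductive step, assume $(\alpha^{(2,p-1)},\beta^{(2,p-1)})$ is a Bailey pair relative to $q$ and apply \cref{Bailey-lemma-special} with $a=q$ once more. On the $\alpha$-side this yields $q^{n^2+n}\alpha_n^{(2,p-1)}$, and adding $n^2+n$ to the exponent $(p-\tfrac12)n^2+(p-\tfrac32)n$ gives $(p+\tfrac12)n^2+(p-\tfrac12)n$, matching $\alpha_n^{(2,p)}$. On the $\beta$-side it yields
\[
\beta_n^{(2,p)}=\sum_{s_{p-1}=0}^n\frac{q^{s_{p-1}^2+s_{p-1}}}{(q)_{n-s_{p-1}}}\,\beta_{s_{p-1}}^{(2,p-1)}.
\]
Substituting the inductive formula for $\beta_{s_{p-1}}^{(2,p-1)}$, using $\frac{1}{(q)_{n-s_{p-1}}(q)_{s_{p-1}}}=\frac{1}{(q)_n}\st{n}{s_{p-1}}_q$, relabelling $n=s_p$, and folding the new index $s_{p-1}$ with its weight $q^{s_{p-1}(s_{p-1}+1)}\st{s_p}{s_{p-1}}_q$ into the nested sum as the $i=p-1$ factor produces $\frac{1}{(q)_n}\sum_{n=s_p\ge\cdots\ge s_1\ge 0}\prod_{i=1}^{p-1}q^{s_i(s_i+1)}\st{s_{i+1}}{s_i}_q$, which is $\beta_n^{(2,p)}$. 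Then \cref{Bailey-lemma-special} asserts $(\alpha^{(2,p)},\beta^{(2,p)})$ is again a Bailey pair relative to $q$, closing the induction.

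I do not expect a genuine obstacle here: the argument is purely formal once the base case is identified. The only points requiring care are the exponent bookkeeping on the $\alpha$-side and the verification that the $n=0$ value of the closed form for $\alpha^{(2,p)}$ agrees with what the chain produces, so that (in contrast to \cref{Bailey-chain-1}) no $\delta_{n,0}$ term is needed. Convergence is automatic because the conjugate Bailey pair underlying \cref{Bailey-lemma-special} is supported on finitely many indices, as noted after \cref{conjugate-Bailey-pair}.
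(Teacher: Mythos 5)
Your proposal is correct and is exactly the paper's argument: the paper's proof simply says the claim follows by repeatedly applying \cref{Bailey-lemma-special} to the unit Bailey pair with $a=q$, as in \cref{Bailey-chain-1}. Your exponent bookkeeping and the observation that no $\delta_{n,0}$ correction is needed (since $\alpha_0=1$ already for $a=q$) are both accurate.
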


\begin{proof}
	The proof is the same as that of \cref{Bailey-chain-1}. More precisely, the claim follows from repeatedly applying \cref{Bailey-lemma-special} to the unit Bailey pair with $a=q$.
\end{proof}

Returning to the definition of Bailey pairs, the above two propositions are equivalent to the following equations.

\begin{proposition}\label{p-Bailey-result}
	For any integer $p \geq 1$, we have
	\begin{align*}
		\beta_n^{(1,p)} &= \sum_{k=-n}^n \frac{(-1)^k q^{\left(p+\frac{1}{2}\right)k^2 - \frac{1}{2}k}}{(q)_{n-k} (q)_{n+k}},\\
		\beta_n^{(2,p)} &= \sum_{k=-n-1}^n \frac{(-1)^k q^{\left(p+\frac{1}{2}\right)k^2 + \left(p-\frac{1}{2}\right)k}}{(q)_{n-k} (q)_{n+k+1}}.
	\end{align*}
\end{proposition}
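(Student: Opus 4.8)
The plan is to unwind the definition of a Bailey pair and substitute the explicit sequences from \cref{Bailey-chain-1} and \cref{Bailey-chain-2}. Recall that $(\alpha^{(1,p)}, \beta^{(1,p)})$ being a Bailey pair relative to $1$ means precisely
\begin{align*}
	\beta_n^{(1,p)} = \sum_{k=0}^n \frac{\alpha_k^{(1,p)}}{(q)_{n-k} (q)_{n+k}},
\end{align*}
so I would just plug in $\alpha_k^{(1,p)} = (-1)^k q^{(p+1/2)k^2 - k/2}(1+q^k) - \delta_{k,0}$ and simplify the right-hand side into the claimed two-sided sum. The $\delta_{k,0}$ term contributes $-1/(q)_n^2$ when $k=0$, while the main term with $k=0$ contributes $2/(q)_n^2$ (because of the factor $1+q^0 = 2$), so together the $k=0$ contribution is $1/(q)_n^2$, which is exactly the $k=0$ term of a symmetric sum $\sum_{k=-n}^n (-1)^k q^{(p+1/2)k^2 - k/2}/((q)_{n-k}(q)_{n+k})$. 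For $k \geq 1$ the factor $(1+q^k)$ splits into two pieces: the piece coming from $1$ gives the $k$-th term of the symmetric sum, and the piece coming from $q^k$ should, after the substitution $k \mapsto -k$ in the exponent, reproduce the $(-k)$-th term. The identity to check here is that $(p+\tfrac12)k^2 - \tfrac12 k + k = (p+\tfrac12)(-k)^2 - \tfrac12(-k)$ and that $(q)_{n-k}(q)_{n+k}$ is symmetric under $k \mapsto -k$; both are immediate. This reassembles the one-sided sum over $k \in \{0,1,\dots,n\}$ into the two-sided sum over $k \in \{-n,\dots,n\}$, proving the first formula.

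For the second formula I would proceed identically, using that $(\alpha^{(2,p)}, \beta^{(2,p)})$ is a Bailey pair relative to $q$, so
\begin{align*}
	\beta_n^{(2,p)} = \sum_{k=0}^n \frac{\alpha_k^{(2,p)}}{(q)_{n-k} (q)_{n+k+1}}
\end{align*}
with $\alpha_k^{(2,p)} = (-1)^k q^{(p+1/2)k^2 + (p-1/2)k}(1-q^{2k+1})/(1-q)$. The only real difference is bookkeeping: the factor $(1-q^{2k+1})/(1-q)$ should be rewritten as a telescoping-type difference so that, after shifting the index in one of the two pieces, the combined sum runs over $k \in \{-n-1, \dots, n\}$ and the shifted $q$-Pochhammer denominators $(q)_{n-k}$ and $(q)_{n+k+1}$ match up correctly. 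Concretely, $\frac{1-q^{2k+1}}{1-q} = \frac{(1-q^{k+1}) + q^{k+1}(1-q^k) - q^k(1-q^{k+1}) + \cdots}{1-q}$ — more cleanly, one observes $\frac{1-q^{2k+1}}{1-q} = [2k+1]_q$ and uses the standard trick of writing the $k$-th term of a Hecke-type sum as a difference of two half-lattice terms; the exponent $(p+\tfrac12)k^2 + (p-\tfrac12)k$ is designed so that replacing $k$ by $-k-1$ leaves it invariant, which is what allows the reflection. I would verify $(p+\tfrac12)k^2 + (p-\tfrac12)k = (p+\tfrac12)(-k-1)^2 + (p-\tfrac12)(-k-1)$ directly (both sides equal $(p+\tfrac12)k^2 + (p+\tfrac12)k + \text{const}$, and the constant must be checked to vanish), and then match the denominators under $k \mapsto -k-1$: $(q)_{n-k} \leftrightarrow (q)_{n+k+1}$, which is manifest.

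The main obstacle — though it is really only a matter of care rather than depth — is getting the reflection symmetry and the index shift exactly right in the second case, where the asymmetric denominators $(q)_{n-k}(q)_{n+k+1}$ and the extra linear term in the exponent conspire so that the natural symmetry is $k \mapsto -k-1$ rather than $k \mapsto -k$. One has to make sure the half-integer shift in the quadratic form is consistent with this, i.e.\ that $(p+\tfrac12)k^2 + (p-\tfrac12)k$ really is a function of $k + \tfrac12$ up to an additive constant, and that this constant does not introduce a spurious power of $q$ when passing between the two forms. Once that symmetry is pinned down, the rest is the routine observation that $\sum_{k=0}^n f(k) + \sum_{k=0}^n f(k)q^{k}|_{\text{reflected}}$ (respectively the analogous split for the $[2k+1]_q$ factor) reassembles into a single sum over the full symmetric (respectively shifted-symmetric) range, and the proof is complete. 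I expect no genuine difficulty beyond this; the content of the proposition is essentially a restatement of \cref{Bailey-chain-1} and \cref{Bailey-chain-2} after folding a one-sided sum into a two-sided one.
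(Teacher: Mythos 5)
Your overall strategy is exactly the paper's: unwind the defining relation \eqref{Bailey-transform} for the Bailey pairs of \cref{Bailey-chain-1} and \cref{Bailey-chain-2}, split the factor $(1+q^k)$ (resp.\ $(1-q^{2k+1})$), and fold one piece onto negative indices. Your treatment of the first identity is complete and correct. Two points in the second case, however, are wrong as written.

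First, for a Bailey pair relative to $a=q$ the denominator in \eqref{Bailey-transform} is $(q)_{n-k}(q^2)_{n+k}$, and since $(q^2)_{n+k}=(q)_{n+k+1}/(1-q)$ the correctly unwound relation is
\[
\beta_n^{(2,p)}=\sum_{k=0}^n\frac{(1-q)\,\alpha_k^{(2,p)}}{(q)_{n-k}\,(q)_{n+k+1}}
=\sum_{k=0}^n\frac{(-1)^k q^{\left(p+\frac12\right)k^2+\left(p-\frac12\right)k}\,\bigl(1-q^{2k+1}\bigr)}{(q)_{n-k}\,(q)_{n+k+1}},
\]
the factor $(1-q)$ cancelling the $1/(1-q)$ built into $\alpha_k^{(2,p)}$. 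The relation you state omits this $(1-q)$ and is therefore off by that factor.

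Second, and more substantively, your key symmetry claim is false: $(p+\tfrac12)k^2+(p-\tfrac12)k$ is \emph{not} invariant under $k\mapsto-k-1$. One has $(p+\tfrac12)(-k-1)^2+(p-\tfrac12)(-k-1)=(p+\tfrac12)k^2+(p+\tfrac32)k+1$, so already the linear coefficients $p-\tfrac12$ and $p+\tfrac32$ disagree; equivalently, the quadratic is a function of $k+\tfrac{2p-1}{2(2p+1)}$, not of $k+\tfrac12$. The verification you propose would therefore fail rather than confirm the step. The reflection only works after including the extra power of $q$ produced by splitting $1-q^{2k+1}$ into $1$ and $-q^{2k+1}$: the second piece carries the exponent $(p+\tfrac12)k^2+(p-\tfrac12)k+2k+1=(p+\tfrac12)k^2+(p+\tfrac32)k+1$, and it is \emph{this} exponent which transforms under $k\mapsto-k-1$ back into $(p+\tfrac12)k^2+(p-\tfrac12)k$, while the sign $-(-1)^k$ becomes $(-1)^{k'}$ and the denominators $(q)_{n-k}$, $(q)_{n+k+1}$ swap as you say. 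Thus the $-q^{2k+1}$ piece over $0\le k\le n$ supplies exactly the terms $-n-1\le k\le -1$ of the two-sided sum. With these two corrections your argument closes and coincides with the paper's proof.
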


\begin{proof}
	The first equation follows from a straightforward calculation. As for the second equation, we have
	\begin{align*}
		\beta_n^{(2,p)} = \sum_{k=0}^n \frac{\alpha_k^{(2,p)}}{(q)_{n-k} (q^2)_{n+k}} = \sum_{k=0}^n \frac{(-1)^k q^{\left(p+\frac{1}{2}\right)k^2 + \left(p-\frac{1}{2}\right)k}}{(q)_{n-k} (q)_{n+k+1}} (1-q^{2k+1}).
	\end{align*}
	We obtain the desired equation by dividing the sum into two parts and changing the variable.
\end{proof}


As can be immediately expected from the definitions of $\beta_n^{(1,p)}$ and $\beta_n^{(2,p)}$, these are closely related to the Habiro-type series. For later calculations, we prepare three auxiliary lemmas.

\begin{lemma}\label{auxiliary1}
	For any integer $n \geq 1$, we have
	\[
		(1-q^n) \beta_n^{(1,p)} = (1-q^n) \sum_{k=-n}^n \frac{(-1)^k q^{\left(p+\frac{1}{2}\right)k^2 - \frac{1}{2}k}}{(q)_{n-k} (q)_{n+k}} = \sum_{k=-n+1}^n \frac{(-1)^k q^{\left(p+\frac{1}{2}\right)k^2 - \frac{1}{2}k}}{(q)_{n-k} (q)_{n+k-1}}.
	\]
\end{lemma}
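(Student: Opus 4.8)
The plan is to work backwards from the right-hand side and recover $(1-q^n)\beta_n^{(1,p)}$. The first equality in the statement is just the closed form for $\beta_n^{(1,p)}$ recorded in \cref{p-Bailey-result}, so everything reduces to the second equality. Write $c_k = (-1)^k q^{(p+\frac12)k^2 - \frac12 k}$ for the common coefficient, and note the elementary symmetry $c_{-k} = q^k c_k$, which is immediate from the definition of $c_k$.

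First I would clear the shifted denominator on the right-hand side: since $n+k \ge 1$ for every $k$ in the range $-n+1 \le k \le n$, we may write $(q)_{n+k-1} = (q)_{n+k}/(1-q^{n+k})$, so the right-hand side becomes $\sum_{k=-n+1}^{n} (1-q^{n+k}) c_k / \bigl((q)_{n-k}(q)_{n+k}\bigr)$. Next I would apply the splitting $1-q^{n+k} = (1-q^n) + q^n(1-q^k)$, which separates this into a principal term $(1-q^n)\sum_{k=-n+1}^{n} c_k/\bigl((q)_{n-k}(q)_{n+k}\bigr)$ plus a correction $q^n \sum_{k=-n+1}^{n} (1-q^k) c_k/\bigl((q)_{n-k}(q)_{n+k}\bigr)$.

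Two small pieces of bookkeeping then remain. First, the principal term differs from $(1-q^n)\beta_n^{(1,p)}$ only by the missing $k=-n$ summand $(1-q^n)c_{-n}/(q)_{2n}$; I would recover exactly this from the $k=-n$ term of the correction sum, using $q^n(1-q^{-n}) = -(1-q^n)$, so that including $k=-n$ in the correction sum simultaneously restores the boundary term. Second, the resulting full correction $q^n\sum_{k=-n}^{n}(1-q^k)c_k/\bigl((q)_{n-k}(q)_{n+k}\bigr)$ vanishes: the $k=0$ term is zero, and under $k \mapsto -k$ together with $c_{-k}=q^k c_k$ the summand is odd, so the symmetric sum cancels in pairs. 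Combining these, the right-hand side equals $(1-q^n)\beta_n^{(1,p)}$.

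I do not expect a genuine obstacle here — the lemma is a finite rearrangement of a Hecke-type sum. The only point requiring care is the $q$-Pochhammer symbol at a negative index, occurring at $k=-n$: rather than adopt a convention such as $1/(q)_{-1}=0$, I will keep the right-hand side sum over $-n+1 \le k \le n$ exactly as stated and account for the boundary term explicitly as above, keeping the argument self-contained.
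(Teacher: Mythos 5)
Your argument is correct, and it is essentially the paper's proof in a different packaging: the paper verifies the same cancellation pairwise, checking $a_{n,-n}+a_{n,n}=b_{n,n}$ and $a_{n,-k}+a_{n,k}=b_{n,-k}+b_{n,k}$ for $0\le k\le n-1$ (with $a_{n,k}$, $b_{n,k}$ the summands of the two sides), which amounts exactly to your splitting $1-q^{n+k}=(1-q^n)+q^n(1-q^k)$ combined with the antisymmetry of the correction sum under $k\mapsto -k$. Both proofs hinge on the coefficient symmetry $c_{-k}=q^k c_k$, and your explicit treatment of the $k=-n$ boundary term is sound.
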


\begin{proof}
	We put
	\begin{align*}
		a_{n,k} &= (1-q^n) \frac{(-1)^k q^{\left(p+\frac{1}{2}\right)k^2 - \frac{1}{2}k}}{(q)_{n-k} (q)_{n+k}} \quad (-n \leq k \leq n),\\
		b_{n,k} &= \frac{(-1)^k q^{\left(p+\frac{1}{2}\right)k^2 - \frac{1}{2}k}}{(q)_{n-k} (q)_{n+k-1}} \quad (-n+1 \leq k \leq n).
	\end{align*}
	Then we can quickly check that $a_{n,-n} + a_{n,n} = b_{n,n}$ and $a_{n,-k} + a_{n,k} = b_{n,-k} + b_{n,k}$ for any $0 \leq k \leq n-1$, which concludes the proof.
\end{proof}

\begin{lemma}\label{auxiliary2}
	For any integer $n \geq 0$, we have
	\[
		q^n \beta_n^{(2,p)} = q^{n} \sum_{k=-n-1}^n \frac{(-1)^k q^{\left(p+\frac{1}{2}\right)k^2+\left(p-\frac{1}{2}\right)k}}{(q)_{n-k} (q)_{n+k+1}} = \sum_{k=-n}^n \frac{(-1)^k q^{\left(p+\frac{1}{2}\right)k^2+\left(p+\frac{1}{2}\right)k}}{(q)_{n-k} (q)_{n+k}}.
	\]
\end{lemma}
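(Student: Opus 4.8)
The first equality is just the formula for $\beta_n^{(2,p)}$ from \cref{p-Bailey-result} multiplied by $q^n$, so the whole content lies in the second equality. The plan is to extract a factor $q^{n-k}$ from the exponent and telescope. Using the identity $\left(p+\frac{1}{2}\right)k^2 + \left(p-\frac{1}{2}\right)k = \left(p+\frac{1}{2}\right)k(k+1) - k$, we get $q^n \cdot q^{\left(p+\frac{1}{2}\right)k^2+\left(p-\frac{1}{2}\right)k} = q^{\left(p+\frac{1}{2}\right)k(k+1)}\,q^{n-k}$, hence
\[
q^n\beta_n^{(2,p)} = \sum_{k=-n-1}^n (-1)^k q^{\left(p+\frac{1}{2}\right)k(k+1)}\,\frac{q^{n-k}}{(q)_{n-k}(q)_{n+k+1}}.
\]
Then I would substitute $\dfrac{q^{n-k}}{(q)_{n-k}} = \dfrac{1}{(q)_{n-k}} - \dfrac{1}{(q)_{n-k-1}}$, valid for $0\le n-k$ with the usual convention that $1/(q)_m = 0$ for $m<0$ (this also covers the boundary case $k=n$). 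This splits the sum as $A-B$, where
\[
A = \sum_{k=-n-1}^n \frac{(-1)^k q^{\left(p+\frac{1}{2}\right)k(k+1)}}{(q)_{n-k}(q)_{n+k+1}}, \qquad B = \sum_{k=-n-1}^n \frac{(-1)^k q^{\left(p+\frac{1}{2}\right)k(k+1)}}{(q)_{n-k-1}(q)_{n+k+1}}.
\]

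Next I would show $A=0$ by a symmetry argument: the index range $-n-1\le k\le n$ is invariant under $k\mapsto -k-1$, the exponent $\left(p+\frac{1}{2}\right)k(k+1)$ is invariant under the same substitution, the two $q$-Pochhammer factors in the denominator swap, and $(-1)^{-k-1} = -(-1)^k$; hence the substitution sends the summand to its negative, forcing $A=-A=0$. For $B$, the $k=n$ term vanishes by the convention above, and on the remaining range $-n-1\le k\le n-1$ I would reindex by $k\mapsto k+1$ and then $k\mapsto -k$, using $\left(p+\frac{1}{2}\right)(k-1)k\mapsto\left(p+\frac{1}{2}\right)k(k+1)$ under $k\mapsto -k$, to obtain $B = -\sum_{k=-n}^n (-1)^k q^{\left(p+\frac{1}{2}\right)k(k+1)}/\big((q)_{n-k}(q)_{n+k}\big)$. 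Therefore $q^n\beta_n^{(2,p)} = A-B = -B$ is exactly the claimed right-hand side after rewriting $\left(p+\frac{1}{2}\right)k(k+1) = \left(p+\frac{1}{2}\right)k^2 + \left(p+\frac{1}{2}\right)k$.

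Everything here is a manipulation of finite sums of rational functions of $q$ — the $k$-range is finite and all exponents are non-negative integers — so no convergence issue arises and the lemma follows. I expect the only mildly delicate point to be the bookkeeping at the boundary indices $k=n$ and $k=-n-1$: one must check that the telescoping identity for $q^{n-k}/(q)_{n-k}$ is applied consistently there and that the $k=n$ term of $B$ is correctly read as zero. This is routine once the convention $1/(q)_m=0$ for $m<0$ is fixed, and is of the same flavour as the term-by-term check carried out in the proof of \cref{auxiliary1}.
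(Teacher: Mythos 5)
Your proof is correct. It arrives at the same identity as the paper but by a visibly different decomposition: the paper's proof pairs the terms $a_{n,k}$ and $a_{n,-k-1}$ directly and verifies $a_{n,-n-1}+a_{n,n}=b_{n,n}$ and $a_{n,-k-1}+a_{n,k}=b_{n,-k-1}+b_{n,k}$ case by case, whereas you first telescope the single factor $q^{n-k}/(q)_{n-k}=1/(q)_{n-k}-1/(q)_{n-k-1}$ to split the sum as $A-B$, kill $A$ by the involution $k\mapsto -k-1$ (under which the summand is antisymmetric and which has no integer fixed point), and reindex $B$. Unwinding your argument pair by pair recovers exactly the paper's identities --- the recursion $(q)_m=(1-q^m)(q)_{m-1}$ and the symmetry $k\mapsto -k-1$ are the only inputs in either version --- so the mathematical content is the same; what your organization buys is that the cancellation mechanism becomes structural (antisymmetry of $A$ under the involution) rather than a term-by-term check, at the mild cost of having to fix the convention $1/(q)_m=0$ for $m<0$ to handle the boundary indices $k=n$ and $k=-n-1$, which you do correctly.
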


\begin{proof}
	We put
	\begin{align*}
		a_{n,k} &= q^n \frac{(-1)^k q^{\left(p+\frac{1}{2}\right)k^2+\left(p-\frac{1}{2}\right)k}}{(q)_{n-k} (q)_{n+k+1}} \quad (-n-1 \leq k \leq n),\\
		b_{n,k} &= \frac{(-1)^k q^{\left(p+\frac{1}{2}\right)k^2+\left(p+\frac{1}{2}\right)k}}{(q)_{n-k} (q)_{n+k}} \quad (-n \leq k \leq n).
	\end{align*}
	Again, we can also show by a direct calculation that $a_{n,-n-1} + a_{n,n} = b_{n,n}$ and $a_{n,-k-1} + a_{n,k} = b_{n,-k-1} + b_{n,k}$ for any $0 \leq k \leq n-1$.
\end{proof}

\begin{lemma}\label{auxiliary3}
	For any integer $n \geq 1$, we have
	\[
		(1-q^{2n}) \beta_n^{(2,p)} = (1-q^{2n}) \sum_{k=-n-1}^n \frac{(-1)^k q^{\left(p+\frac{1}{2}\right)k^2+\left(p-\frac{1}{2}\right)k}}{(q)_{n-k} (q)_{n+k+1}} = \sum_{k=-n+1}^n \frac{(-1)^k q^{\left(p+\frac{1}{2}\right)k^2+\left(p-\frac{1}{2}\right)k}}{(q)_{n-k} (q)_{n+k-1}}.
	\]
\end{lemma}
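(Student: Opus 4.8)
The plan is to reduce the identity—using the reflection $k \mapsto -k-1$ that already governs $\beta_n^{(2,p)}$—to the vanishing of a single sum whose summands cancel in pairs. Throughout I write $f(k) = (-1)^k q^{\left(p+\frac{1}{2}\right)k^2 + \left(p-\frac{1}{2}\right)k}$ for the numerator occurring in \cref{p-Bailey-result}, so that by \cref{p-Bailey-result} we have $\beta_n^{(2,p)} = \sum_{k=-n-1}^{n} f(k)/\bigl((q)_{n-k}(q)_{n+k+1}\bigr)$.

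First I would rewrite the right-hand side. For $-n+1 \le k \le n$ one has the factorization $(q)_{n+k+1} = (1-q^{n+k})(1-q^{n+k+1})(q)_{n+k-1}$, while for $k = -n$ and $k = -n-1$ the factor $(1-q^{n+k})(1-q^{n+k+1})$ contains $1 - q^{0} = 0$ (and the corresponding denominator $(q)_{n+k+1}$ is nonzero); hence the right-hand side of the lemma equals $\sum_{k=-n-1}^{n} (1-q^{n+k})(1-q^{n+k+1}) f(k)/\bigl((q)_{n-k}(q)_{n+k+1}\bigr)$. Combining this with \cref{p-Bailey-result} for the left-hand side, the lemma becomes the assertion that $\sum_{k=-n-1}^{n} c_k\, f(k)/\bigl((q)_{n-k}(q)_{n+k+1}\bigr) = 0$, where $c_k := (1-q^{2n}) - (1-q^{n+k})(1-q^{n+k+1}) = q^{n+k} + q^{n+k+1} - q^{2n} - q^{2n+2k+1}$.

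Next I would check the pointwise symmetries under $k \mapsto -k-1$. The denominator $(q)_{n-k}(q)_{n+k+1}$ is invariant, and an elementary computation gives $f(-k-1) = -q^{2k+1} f(k)$ (this is the reflection used to prove \cref{p-Bailey-result}) together with $c_k = q^{2k+1} c_{-k-1}$. Multiplying these, $c_{-k-1} f(-k-1) = -c_k f(k)$, so the $k$-th and $(-k-1)$-th summands of the sum above cancel. Since $\{-n-1, \dots, n\}$ is the disjoint union of the pairs $\{k, -k-1\}$ for $0 \le k \le n$ and this involution has no fixed point, the sum is zero, which completes the proof.

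The hard part here is conceptual rather than computational: the naive imitation of \cref{auxiliary1} and \cref{auxiliary2}—seeking a term-by-term identity between the summands of the two sides—does not work, because the extra factor is now $1-q^{2n}$ instead of $1-q^{n}$ (one checks that the would-be pairing no longer matches the two sides once $n \ge 3$). The point is instead to clear the denominators on the right so that the claim becomes a vanishing sum, and then to observe that the polynomial $c_k$ satisfies exactly the reflection relation $c_k = q^{2k+1} c_{-k-1}$ needed for the pairwise cancellation. After that, only the same kind of routine $q$-Pochhammer bookkeeping used in the previous two lemmas remains.
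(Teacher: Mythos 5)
Your proof is correct, and at its core it is the same argument the paper uses: the involution $k \mapsto -k-1$, under which the denominator $(q)_{n-k}(q)_{n+k+1}$ is invariant and $f(-k-1) = -q^{2k+1}f(k)$, together with a pairwise cancellation over the fixed-point-free pairs $\{k,-k-1\}$. The paper's proof of \cref{auxiliary3} organizes this as three families of identities between the summands $a_{n,k}$ of the left side and $b_{n,k}$ of the right side, namely $a_{n,-n-1}+a_{n,n}=b_{n,n}$, $a_{n,-n}+a_{n,n-1}=b_{n,n-1}$, and $a_{n,-k-1}+a_{n,k}=b_{n,-k-1}+b_{n,k}$ for $0\le k\le n-2$; your version puts everything over the common denominator $(q)_{n-k}(q)_{n+k+1}$ first, so that the two boundary pairs are absorbed into the generic case (the adjoined terms at $k=-n,-n-1$ vanish) and the whole lemma reduces to the single polynomial identity $c_k = q^{2k+1}c_{-k-1}$. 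That is a genuinely cleaner packaging of the same computation. One correction to your closing remark: the claim that the paper-style paired identity ``no longer matches the two sides once $n\ge 3$'' is not accurate --- the identity $a_{n,-k-1}+a_{n,k}=b_{n,-k-1}+b_{n,k}$ does hold for all $n$ and all $0\le k\le n-2$ (it is equivalent to $(1-q^{2n})(1-q^{2k+1}) = (1-q^{n+k})(1-q^{n+k+1}) - q^{2k+1}(1-q^{n-k-1})(1-q^{n-k})$, which one checks directly), and this is exactly how the paper proceeds, with only the two top pairs needing separate (but analogous) treatment. This misstatement does not affect the validity of your proof.
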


\begin{proof}
	As in the above two proofs, we put
	\begin{align*}
		a_{n,k} &= (1-q^{2n}) \frac{(-1)^k q^{\left(p+\frac{1}{2}\right)k^2+\left(p-\frac{1}{2}\right)k}}{(q)_{n-k} (q)_{n+k+1}} \quad (-n-1 \leq k \leq n),\\
		b_{n,k} &= \frac{(-1)^k q^{\left(p+\frac{1}{2}\right)k^2+\left(p-\frac{1}{2}\right)k}}{(q)_{n-k} (q)_{n+k-1}} \quad (-n+1 \leq k \leq n).
	\end{align*}
	The situation becomes slightly different, but $a_{n,-n-1} + a_{n,n} = b_{n,n}$, $a_{n,-n} + a_{n,n-1} = b_{n,n-1}$, and $a_{n,-k-1} + a_{n,k} = b_{n,-k-1} + b_{n,k}$ $(0 \leq k \leq n-2)$ are still shown in the same way.
\end{proof}

\subsection{Hecke-type expansions of Habiro-type series}\label{s2-3}

Using the Bailey chains prepared in the previous subsection, we transform the five Habiro-type series into the Hecke-type series. Since Hikami~\cite{Hikami2007} has done the first and the fifth cases, we will review his proofs and then work on the remaining three. We note that for the remaining three, Hikami led to Hecke-type expansions only for the case $p=2$. Our result extends Hikami's results, but our proof method is slightly different from his.

The following equation is also used in the proof (see Hikami~\cite[Lemma 3.6]{Hikami2007}).

\begin{lemma}\label{Fine}
	For any non-negative integer $c \geq 0$, we have
	\[
	\sum_{m=0}^\infty q^m \frac{(q)_{2m+c}}{(q)_{m+c} (q)_m} = \frac{1}{(q)_\infty} \sum_{k=0}^\infty (-1)^k q^{\frac{3}{2} k^2 + \left(c+\frac{3}{2} \right)k}.
\]
\end{lemma}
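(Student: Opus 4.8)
The plan is to prove this by a short, direct $q$-series manipulation that uses only the $q$-binomial theorem and Euler's identity; although the rest of this section leans on Bailey's lemma, this particular identity is cleanest to establish by hand. First I would rewrite the shifting factor as a $q$-Pochhammer symbol,
\[
	\frac{(q)_{2m+c}}{(q)_{m+c}} = (q^{m+c+1})_m,
\]
and expand it by the finite $q$-binomial theorem $(z)_m = \sum_{k=0}^m (-1)^k q^{\frac{k(k-1)}{2}} \st{m}{k}_q z^k$, specialised to $z = q^{m+c+1}$. Using $\st{m}{k}_q = (q)_m/((q)_k (q)_{m-k})$ and $m + (m+c+1)k = m(k+1) + (c+1)k$, this turns the summand into
\[
	q^m \frac{(q)_{2m+c}}{(q)_{m+c}(q)_m} = \sum_{k=0}^m \frac{(-1)^k q^{\frac{k(k-1)}{2} + m(k+1) + (c+1)k}}{(q)_k (q)_{m-k}}.
\]

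Next I would sum over $m \ge 0$ and interchange the two summations; this is legitimate in $\Z[[q]]$ because the exponent $\frac{k(k-1)}{2} + m(k+1) + (c+1)k$ is at least $m$, so only finitely many pairs $(m,k)$ contribute to any given power of $q$. Pulling out the factors depending only on $k$, the left-hand side of the lemma becomes
\[
	\sum_{k \ge 0} \frac{(-1)^k q^{\frac{k(k-1)}{2} + (c+1)k}}{(q)_k} \sum_{m \ge k} \frac{q^{m(k+1)}}{(q)_{m-k}}.
\]
Setting $j = m-k$ in the inner sum and invoking Euler's identity $\sum_{j \ge 0} z^j/(q)_j = 1/(z)_\infty$ with $z = q^{k+1}$ gives
\[
	\sum_{m \ge k} \frac{q^{m(k+1)}}{(q)_{m-k}} = \frac{q^{k(k+1)}}{(q^{k+1})_\infty} = \frac{q^{k(k+1)}(q)_k}{(q)_\infty}.
\]
Substituting this back, the factor $(q)_k$ cancels and the exponent collapses to $\frac{k(k-1)}{2} + (c+1)k + k(k+1) = \frac{3}{2}k^2 + \left(c+\frac{3}{2}\right)k$, which is exactly the desired right-hand side $\frac{1}{(q)_\infty}\sum_{k \ge 0}(-1)^k q^{\frac{3}{2}k^2 + (c+\frac{3}{2})k}$.

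I do not expect a genuine obstacle here: the whole argument is an elementary computation, and the only points requiring a word of care are the tracking of the quadratic exponent and the justification — immediate in the formal power series ring — of the interchange of the two summations. As a sanity check one may also verify the $c \to \infty$ limit directly, where both sides reduce to $\sum_{m \ge 0} q^m/(q)_m = 1/(q)_\infty$.
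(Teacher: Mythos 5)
Your proof is correct, but it takes a genuinely different route from the paper. The paper treats the lemma as a parameter specialization of a known identity of Fine (equation (25.96) of his book), setting $a = q^c$, $b = 0$, $t = q$, and so the entire content of the proof is outsourced to that reference. You instead derive the identity from scratch: rewriting $\frac{(q)_{2m+c}}{(q)_{m+c}} = (q^{m+c+1})_m$, expanding by the finite $q$-binomial theorem, interchanging the two sums, and evaluating the inner sum by Euler's identity $\sum_{j \ge 0} z^j/(q)_j = 1/(z)_\infty$ with $z = q^{k+1}$, after which $(q^{k+1})_\infty = (q)_\infty/(q)_k$ cancels the $(q)_k$ and the exponents collapse to $\tfrac{3}{2}k^2 + (c+\tfrac{3}{2})k$ as claimed. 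Every step checks out, including the justification of the interchange in $\Z[[q]]$ (the exponent dominates $m$, and $k \le m$) and the final exponent bookkeeping. What each approach buys: the paper's citation is shorter on the page but opaque without Fine's book in hand, whereas your argument is self-contained, uses only the two most standard $q$-series facts, and in effect re-proves the $b=0$ case of Fine's identity — which is arguably more in the spirit of this section than invoking the general Rogers--Fine machinery.
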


\begin{proof}
	In the equation given in \cite[(25.96)]{Fine1988},
	\[
		\sum_{m=0}^\infty \frac{(aq)_{2m} (bq)_m}{(aq)_m (q)_m} t^m = \frac{(btq)_\infty}{(t)_\infty} \sum_{k=0}^\infty \frac{(bq)_k (t)_k}{(q)_k (btq)_{2k}} (-at)^k q^{\frac{k(3k+1)}{2}},
	\]
	we take $a = q^c, b=0$ and $t = q$.
\end{proof}

\begin{theorem}[{Hikami~\cite[Theorem 3.5]{Hikami2007}}]\label{Hikami-Theorem3.5}
	For $p \geq 1$, we have the Hecke-type expansion for $H_p^{(1)}(q)$ as
	\[
		H_p^{(1)}(q) = \frac{1}{(q)_\infty} \left(\sum_{a, b \geq 0} - \sum_{a, b < 0} \right) (-1)^{a+b} q^{\left(p+\frac{1}{2}\right)a^2 + 2ab + \frac{3}{2} b^2 + \frac{2p+1}{2} a + \frac{5}{2} b}.
	\]
\end{theorem}

\begin{proof}
	By the definition,
	\begin{align*}
		H_p^{(1)}(q) = \sum_{s_p=0}^\infty q^{s_p} (q^{s_p+1})_{s_p+1} (q)_{s_p} \beta_{s_p}^{(2,p)} = \sum_{b=0}^\infty q^b (q)_{2b+1} \beta_b^{(2,p)}.
	\end{align*}
	Then, \cref{p-Bailey-result} implies that
	\begin{align*}
		H_p^{(1)}(q) &= \sum_{b=0}^\infty q^b (q)_{2b+1} \sum_{a=-b-1}^b \frac{(-1)^a q^{\left(p+\frac{1}{2}\right)a^2 + \left(p-\frac{1}{2}\right)a}}{(q)_{b-a} (q)_{b+a+1}}.
	\end{align*}
	By dividing the range of the inner sum into $0\leq a \leq b$ and $-b-1 \leq a \leq -1$ and changing the order of the sums,
	\begin{align*}
		H_p^{(1)} &=  \sum_{a\geq 0} (-1)^a q^{\left(p+\frac{1}{2}\right)a^2 + \left(p-\frac{1}{2}\right)a} \sum_{b=a}^\infty \frac{q^b (q)_{2b+1}}{(q)_{b-a} (q)_{b+a+1}}\\
			&\qquad + \sum_{a<0} (-1)^a q^{\left(p+\frac{1}{2}\right)a^2 + \left(p-\frac{1}{2}\right)a} \sum_{b=-a-1}^\infty \frac{q^b (q)_{2b+1}}{(q)_{b-a} (q)_{b+a+1}}.
	\end{align*}
	Changing the variables in two sums so that the range of $b$ is $b \geq 0$ implies
	\begin{align*}
		H_p^{(1)}(q) &= \sum_{a\geq 0} (-1)^a q^{\left(p+\frac{1}{2}\right)a^2 + \left(p-\frac{1}{2}\right)a} \sum_{b=0}^\infty \frac{q^{a+b} (q)_{2b+2a+1}}{(q)_b (q)_{b+2a+1}} \\
			&\qquad + \sum_{a<0} (-1)^a q^{\left(p+\frac{1}{2}\right)a^2 + \left(p-\frac{1}{2}\right)a} \sum_{b=0}^\infty \frac{q^{b-a-1} (q)_{2b-2a-1}}{(q)_b(q)_{b-2a-1}}.
	\end{align*}
	Finally, \cref{Fine} yields that
	\begin{align*}
		H_p^{(1)}(q) &= \frac{1}{(q)_\infty} \sum_{a\geq 0} (-1)^a q^{\left(p+\frac{1}{2}\right)a^2 + \left(p+\frac{1}{2}\right)a} \sum_{b=0}^\infty (-1)^b q^{\frac{3}{2} b^2 + \left(2a+\frac{5}{2} \right)b}\\
			&\qquad + \frac{1}{(q)_\infty} \sum_{a<0} (-1)^a q^{\left(p+\frac{1}{2}\right)a^2 + \left(p-\frac{3}{2}\right)a-1} \sum_{b=0}^\infty (-1)^b q^{\frac{3}{2}b^2 + \left(-2a+\frac{1}{2}\right)b}\\
			&= \frac{1}{(q)_\infty} \left(\sum_{a, b \geq 0} - \sum_{a,b < 0} \right) (-1)^{a+b} q^{\left(p+\frac{1}{2}\right) a^2 + 2ab + \frac{3}{2} b^2 + \left(p+\frac{1}{2}\right)a + \frac{5}{2} b},
	\end{align*}
	which is the desired Hecke-type expansion.
\end{proof}

\begin{theorem}[{Hikami~\cite[Theorem 3.9]{Hikami2007}}]\label{Habiro-5}
	For $p \geq 1$, we have the Hecke-type expansion for $H_p^{(5)}(q)$ as
	\[
		H_p^{(5)}(q) = \frac{1}{(q)_\infty} \left(\sum_{a, b \geq 0} - \sum_{a, b < 0} \right) (-1)^{a+b} q^{\left(p+\frac{1}{2}\right)a^2 + 2ab + \frac{3}{2} b^2 + \frac{1}{2} a + \frac{3}{2} b}.
	\]
\end{theorem}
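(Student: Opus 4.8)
The approach is to run Hikami's proof of \cref{Hikami-Theorem3.5} with the roles of the two Rogers--Ramanujan Bailey chains interchanged: here the inner weight is $q^{s_i^2}$ rather than $q^{s_i(s_i+1)}$, which matches the first Bailey chain of \cref{Bailey-chain-1} instead of the second. Concretely, for fixed $s_p$ the inner multi-sum over $s_{p-1}\geq\cdots\geq s_1\geq 0$ appearing in $H_p^{(5)}(q)$ is exactly $(q)_{s_p}\,\beta_{s_p}^{(1,p)}$ by the definition of $\beta^{(1,p)}$ in \cref{Bailey-chain-1}. Combined with the elementary identity $(q^{m+1})_m(q)_m=(q)_{2m}$ (so that $q^{s_p}(q^{s_p+1})_{s_p}(q)_{s_p}=q^{s_p}(q)_{2s_p}$), this rewrites the defining series as
\[
	H_p^{(5)}(q) = \sum_{b=0}^\infty q^b (q^{b+1})_b (q)_b\, \beta_b^{(1,p)} = \sum_{b=0}^\infty q^b (q)_{2b}\, \beta_b^{(1,p)}.
\]

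Next I would insert the Hecke-type form $\beta_b^{(1,p)} = \sum_{a=-b}^b (-1)^a q^{(p+1/2)a^2 - a/2}/\bigl((q)_{b-a}(q)_{b+a}\bigr)$ from \cref{p-Bailey-result}, split the $a$-sum into the ranges $0\leq a\leq b$ and $-b\leq a\leq -1$, and interchange the order of summation. In the first range I reindex $b\mapsto b+a$, turning the summand into $q^{b+a}(q)_{2b+2a}/\bigl((q)_b(q)_{b+2a}\bigr)$; in the second range I reindex $b\mapsto b-a$, turning it into $q^{b-a}(q)_{2b-2a}/\bigl((q)_{b-2a}(q)_b\bigr)$; in both cases $b$ now runs over $b\geq 0$. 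These rearrangements are harmless because, for each power of $q$, only finitely many terms contribute (equivalently, everything converges absolutely for $|q|<1$), so the resulting identity holds both as holomorphic functions on the unit disc and at the level of formal power series.

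Then I apply \cref{Fine} to each inner sum over $b$, with $c=2a$ in the first range and $c=-2a$ (a non-negative integer since $a\leq -1$) in the second; this replaces the inner sum by $\frac{1}{(q)_\infty}\sum_{b\geq 0}(-1)^b q^{\frac32 b^2 + (c+\frac32)b}$. Collecting exponents, the $a\geq 0$ part becomes precisely $\frac{1}{(q)_\infty}\sum_{a,b\geq 0}(-1)^{a+b}q^{(p+1/2)a^2 + 2ab + \frac32 b^2 + \frac12 a + \frac32 b}$, which is the first sum in the asserted expansion, and the $a<0$ part becomes $\frac{1}{(q)_\infty}\sum_{a<0,\,b\geq 0}(-1)^{a+b}q^{(p+1/2)a^2 - 2ab + \frac32 b^2 - \frac32 a + \frac32 b}$.

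It remains to recognize this last double sum as $-\frac{1}{(q)_\infty}\sum_{a,b<0}(-1)^{a+b}q^{(p+1/2)a^2 + 2ab + \frac32 b^2 + \frac12 a + \frac32 b}$; I would do this via the substitution $b\mapsto -b-1$, which sends $b<0$ to $b\geq 0$, supplies the sign factor $(-1)^{-b-1}=-(-1)^b$ that cancels the leading minus, and (after expanding the quadratic in the exponent) reproduces the exponent above term by term while leaving $a<0$ untouched. Adding the two pieces gives the claimed Hecke-type formula for $H_p^{(5)}(q)$. I expect the only genuinely delicate point to be exactly this sign-and-index bookkeeping in the $a<0$ range — the same subtlety present in Hikami's handling of $H_p^{(1)}(q)$ in \cref{Hikami-Theorem3.5} — while everything else is a routine combination of \cref{p-Bailey-result} and \cref{Fine}.
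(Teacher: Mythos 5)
Your proposal is correct and follows essentially the same route as the paper: rewrite $H_p^{(5)}(q)=\sum_{b\geq 0}q^b(q)_{2b}\beta_b^{(1,p)}$, substitute the expression from \cref{p-Bailey-result}, split and reindex the $a$-ranges, and apply \cref{Fine}; your final substitution $b\mapsto -b-1$ to package the $a<0$ part checks out. The only difference is that you spell out the sign-and-index bookkeeping that the paper leaves implicit by appeal to the proof of \cref{Hikami-Theorem3.5}.
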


\begin{proof}
	The idea of the proof is entirely the same as that of \cref{Hikami-Theorem3.5}. By the definition,
	\[
		H_p^{(5)}(q) = \sum_{s_p=0}^\infty q^{s_p} (q^{s_p+1})_{s_p} (q)_{s_p} \beta_{s_p}^{(1, p)} = \sum_{b=0}^\infty q^b (q)_{2b} \beta_b^{(1,p)}.
	\]
	Then, \cref{p-Bailey-result} implies that
	\[
		H_p^{(5)}(q) = \sum_{b=0}^\infty q^b (q)_{2b} \sum_{a=-b}^b \frac{(-1)^a q^{\left(p+\frac{1}{2} \right)a^2 - \frac{1}{2}a}}{(q)_{b-a} (q)_{b+a}}.
	\]
	By dividing the range of the sum into $0 \leq a \leq b$ and $-b \leq a \leq -1$, changing the order of the sums, and changing the variables in two sums so that the range of $b$ is $b \geq 0$, we have
	\begin{align*}
		H_p^{(5)}(q) &= \sum_{a \geq 0} (-1)^a q^{\left(p+\frac{1}{2} \right)a^2 + \frac{1}{2}a} \sum_{b=0}^\infty \frac{q^{b} (q)_{2b+2a}}{(q)_{b+2a} (q)_{b}}\\
			&\qquad + \sum_{a<0} (-1)^a q^{\left(p+\frac{1}{2}\right)a^2 - \frac{3}{2}a} \sum_{b=0}^\infty \frac{q^{b} (q)_{2b-2a}}{(q)_{b-2a} (q)_{b}}.
	\end{align*}
	Finally, \cref{Fine} yields the result.
\end{proof}

In the two cases above, the straightforward calculations transformed the Habiro-type series into a form in which \cref{Fine} can be applied. However, in the remaining three cases, additional modifications are required.

\begin{theorem}\label{Habiro-2}
	For $p \geq 1$, we have the Hecke-type expansion for $H_p^{(2)}(q)$ as
	\[
		H_p^{(2)}(q) = \frac{1}{(q)_\infty} \left(\sum_{a, b \geq 0} - \sum_{a, b < 0} \right) (-1)^{a+b} q^{\left(p+\frac{1}{2}\right)a^2 + 2ab + \frac{3}{2} b^2 + \frac{1}{2} a + \frac{1}{2} b}.
	\]
\end{theorem}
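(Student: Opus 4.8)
The plan is to follow the strategy used above for $H_p^{(1)}$ and $H_p^{(5)}$, now with two extra ingredients: \cref{auxiliary1}, to absorb a factor $(1-q^{b})$ before \cref{Fine} becomes applicable, and the Jacobi triple product \eqref{Jacobi-triple}, to dispose of a leftover boundary term at the end. Starting from the Bailey chain in \cref{Bailey-chain-1}, I would first rewrite
\[
	H_p^{(2)}(q)=\sum_{b=0}^\infty q^{b}(q^{b})_{b}(q)_{b}\,\beta_{b}^{(1,p)},
\]
note that the $b=0$ summand is $1$ and that $(q^{b})_{b}(q)_{b}=(1-q^{b})(q)_{2b-1}$ for $b\ge1$, and then substitute the second formula of \cref{auxiliary1} for $(1-q^{b})\beta_{b}^{(1,p)}$, obtaining
\[
	H_p^{(2)}(q)=1+\sum_{b=1}^\infty q^{b}(q)_{2b-1}\sum_{k=1-b}^{b}\frac{(-1)^{k}q^{\left(p+\frac12\right)k^2-\frac12 k}}{(q)_{b-k}(q)_{b+k-1}}.
\]

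Next I would interchange the two sums --- this is legitimate as formal power series because every summand has $q$-order at least $b$ --- split the inner range into the blocks $k\ge1$, $k=0$, $k\le-1$, and shift the outer index in each block so it runs over $b\ge0$, via $b=k+m$, $b=1+m$, $b=1-k+m$ respectively ($m\ge0$). Each resulting inner sum over $m$ then has exactly the shape $\sum_{m\ge0}q^{m}(q)_{2m+c}/\bigl((q)_{m+c}(q)_{m}\bigr)$ of \cref{Fine}, with $c=2k-1$, $c=1$, $c=1-2k$ respectively, all nonnegative in the relevant ranges. Applying \cref{Fine} and collecting exponents, I expect
\[
	H_p^{(2)}(q)=1+\frac{1}{(q)_\infty}\Biggl(\ \sum_{a\ge1,\,b\ge0}(-1)^{a+b}q^{Q(a,b)}\ +\ \sum_{l\ge0,\,j\ge0}(-1)^{l+j}q^{R(l,j)}\ \Biggr),
\]
where $Q(a,b)=\left(p+\tfrac12\right)a^2+2ab+\tfrac32 b^2+\tfrac12 a+\tfrac12 b$ is the quadratic form in the statement and $R(l,j)=\left(p+\tfrac12\right)l^2+2lj+\tfrac32 j^2+\tfrac32 l+\tfrac52 j+1$; the first double sum comes from the $k\ge1$ block, and the $l=0$, resp.\ $l\ge1$, part of the second from the $k=0$, resp.\ $k\le-1$, block.

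To conclude, I would reorganize this into the claimed form. A direct computation gives $R(l,j)=Q(-l,-1-j)$ and $(-1)^{l+j}=-(-1)^{(-l)+(-1-j)}$, so under $l=-a$, $j=-1-b$ the second double sum turns into $-\sum_{a\le0,\,b<0}(-1)^{a+b}q^{Q(a,b)}$. On the other hand $Q(0,b)=\tfrac32 b^2+\tfrac12 b$, and specializing \eqref{Jacobi-triple} (replace $q$ by $q^3$ and put $\zeta=q^2$) gives $\sum_{b\in\Z}(-1)^{b}q^{\frac32 b^2+\frac12 b}=(q)_\infty$. Writing $1=(q)_\infty/(q)_\infty$ and replacing this $(q)_\infty$ by the theta sum, the $a=0$ row of the second double sum cancels the $a=0,\,b<0$ part of the theta sum, and what survives is exactly $\frac{1}{(q)_\infty}\bigl(\sum_{a,b\ge0}-\sum_{a,b<0}\bigr)(-1)^{a+b}q^{Q(a,b)}$, the desired Hecke-type expansion.

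Every individual step is elementary, so I do not expect a genuine obstacle; the one point requiring care --- and where the index bookkeeping is easiest to get wrong --- is this last reorganization, where the spurious constant $1$ (equivalently the Euler factor $(q)_\infty$) is reabsorbed using the Jacobi triple product and the reflection $(a,b)\mapsto(-a,-1-b)$. It is the exact analogue of the closing move in the Rogers--Ramanujan derivation of \cref{Ex:RR-id}, and for $p=1$ it is what creates the symmetry $a\leftrightarrow b$ of $Q$ responsible for the coincidence of the two false-theta terms mentioned in the introduction.
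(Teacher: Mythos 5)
Your proposal is correct and follows essentially the same route as the paper's proof: rewrite $H_p^{(2)}$ via $\beta^{(1,p)}$, absorb the factor $(1-q^b)$ with \cref{auxiliary1}, split the inner sum by the sign of the index, apply \cref{Fine}, and dispose of the leftover constant $1$ via Euler's pentagonal number theorem (your Jacobi-triple-product specialization with $\zeta=q^2$ is the same identity). Your explicit reflection $(l,j)\mapsto(-l,-1-j)$ with $R(l,j)=Q(-l,-1-j)$ just spells out what the paper compresses into ``rearranging the equation,'' and all the index bookkeeping checks out.
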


\begin{proof}
	By the definition, we have
	\[
		H_p^{(2)}(q) = \sum_{s_p=0}^\infty q^{s_p} (q^{s_p})_{s_p} (q)_{s_p} \beta_{s_p}^{(1,p)} = 1 + \sum_{b=1}^\infty q^{b} (q)_{2b-1} (1-q^{b}) \beta_{b}^{(1,p)}.
	\]
	By the auxiliary \cref{auxiliary1}, 
	\[
		H_p^{(2)}(q) = 1+\sum_{b=1}^\infty q^{b} (q)_{2b-1} \sum_{a=-b+1}^{b} \frac{(-1)^a q^{\left(p+\frac{1}{2}\right)a^2 - \frac{1}{2}a}}{(q)_{b-a} (q)_{b+a-1}}.
	\]
	We now divide the range of the sum into $a>0$, $a=0$, and $a<0$. Then the same transformation yields that
	\begin{align*}
		H_p^{(2)}(q) &= 1+ q \sum_{b=0}^\infty \frac{q^b (q)_{2b+1}}{(q)_b (q)_{b+1}} + \sum_{a>0} (-1)^a q^{\left(p+\frac{1}{2}\right)a^2 - \frac{1}{2}a} \sum_{b=a}^\infty \frac{q^{b} (q)_{2b-1}}{(q)_{b-a} (q)_{b+a-1}}\\
			&\qquad + \sum_{a<0} (-1)^a q^{\left(p+\frac{1}{2}\right)a^2 - \frac{1}{2}a} \sum_{b=-a+1}^\infty \frac{q^{b} (q)_{2b-1}}{(q)_{b-a} (q)_{b+a-1}}.
	\end{align*}
	By applying \cref{Fine}, 
	\begin{align*}
	H_p^{(2)}(q) &= 1+ \frac{1}{(q)_\infty} \sum_{b=0}^\infty (-1)^b q^{\frac{3}{2}b^2 +\frac{5}{2} b + 1} + \frac{1}{(q)_\infty} \sum_{a >0} (-1)^a q^{\left(p+\frac{1}{2}\right)a^2 + \frac{1}{2}a} \sum_{b=0}^\infty (-1)^b q^{\frac{3}{2} b^2 + \left(2a+\frac{1}{2} \right)b}\\
		&\qquad + \frac{1}{(q)_\infty} \sum_{a<0} (-1)^a q^{\left(p+\frac{1}{2}\right)a^2 - \frac{3}{2} a+1} \sum_{b=0}^\infty (-1)^b q^{\frac{3}{2}b^2 + \left(-2a+\frac{5}{2}\right)b}.
\end{align*}
	Finally, in Jacobi's triple product~\eqref{Jacobi-triple}, by setting $q \mapsto q^3$ and $\zeta \mapsto q$, we obtain Euler's pentagonal number theorem
	\begin{align}\label{Pentagon}
		(q)_\infty = \sum_{b \in \Z} (-1)^b q^{\frac{3}{2}b^2 - \frac{1}{2}b} = -\sum_{b \geq 0} (-1)^b q^{\frac{3}{2} b^2 + \frac{5}{2} b + 1} + \sum_{b \geq 0} (-1)^b q^{\frac{3}{2} b^2 + \frac{1}{2} b}.
	\end{align}
	Then we obtain the desired result by rearranging the equation.
\end{proof}

\begin{theorem}\label{Habiro-3}
	For $p \geq 1$, we have the Hecke-type expansion for $H_p^{(3)}(q)$ as
	\[
		H_p^{(3)}(q) = \frac{1}{(q)_\infty} \left(\sum_{a, b \geq 0} - \sum_{a, b < 0} \right) (-1)^{a+b} q^{\left(p+\frac{1}{2}\right)a^2 + 2ab + \frac{3}{2} b^2 + \frac{2p+3}{2} a + \frac{3}{2} b}.
	\]
\end{theorem}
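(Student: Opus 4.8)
The plan is to run exactly the machinery of the proof of \cref{Habiro-5}; the extra factor $q^{s_p}$ in the definition of $H_p^{(3)}(q)$ (compared with $H_p^{(5)}(q)$) is precisely what is needed to symmetrize the relevant Bailey sum. First I would collapse the nested sum using the second Bailey chain. Fixing $s_p = b$ and summing over $s_{p-1}\geq\cdots\geq s_1$, the product $\prod_{i=1}^{p-1} q^{s_i(s_i+1)}\st{s_{i+1}}{s_i}_q$ sums to $(q)_b\,\beta_b^{(2,p)}$ by \cref{Bailey-chain-2}; combined with the identity $(q^{b+1})_b(q)_b = (q)_{2b}$ this yields
\[
	H_p^{(3)}(q) = \sum_{b\geq 0} q^{2b}(q)_{2b}\,\beta_b^{(2,p)}.
\]

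Next I would peel off one factor of $q^b$ and invoke the auxiliary identity \cref{auxiliary2}, which replaces $q^b\beta_b^{(2,p)}$ by the \emph{symmetric} sum $\sum_{k=-b}^b (-1)^k q^{(p+\frac12)k^2 + (p+\frac12)k}/\bigl((q)_{b-k}(q)_{b+k}\bigr)$. This gives
\[
	H_p^{(3)}(q) = \sum_{b\geq 0} q^b (q)_{2b} \sum_{k=-b}^b \frac{(-1)^k q^{(p+\frac12)k^2 + (p+\frac12)k}}{(q)_{b-k}(q)_{b+k}},
\]
which is literally the expression handled in the proof of \cref{Habiro-5}, except that the linear exponent $-\tfrac12 k$ is replaced by $(p+\tfrac12)k$. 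From here I would copy that proof verbatim: (a) split the inner sum at $k=0$ into the ranges $0\leq k\leq b$ and $-b\leq k\leq -1$ and interchange the order of summation; (b) shift the variable $b$ in each of the two pieces so that its range becomes $b\geq 0$; (c) apply \cref{Fine} with $c=2a$ to the first piece and with $c=-2a$ (which is $\geq 0$ since $a<0$) to the second. This produces the double sum $\tfrac{1}{(q)_\infty}\sum_{a,b\geq 0}(-1)^{a+b} q^{(p+\frac12)a^2 + 2ab + \frac32 b^2 + (p+\frac32)a + \frac32 b}$ together with a second sum over $a<0,\ b\geq 0$.

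Finally, in the second double sum I would perform the substitution $a\mapsto -a$, $b\mapsto -b-1$; a short computation shows this carries its summand onto $-(-1)^{a+b} q^{(p+\frac12)a^2 + 2ab + \frac32 b^2 + (p+\frac32)a + \frac32 b}$ with $a,b<0$, so that, using $p+\tfrac32 = \tfrac{2p+3}{2}$, the two contributions combine into the asserted Hecke-type expansion. The computations are all routine, of the same type as in \cref{Hikami-Theorem3.5} and \cref{Habiro-5}, and no new convergence issue arises. The only place where genuine care is needed is the last recombination step, where one must verify that the half-integer index shift $b\mapsto -b-1$ reproduces exactly the stated quadratic and linear exponents and flips the overall sign; the one structural choice in the argument is selecting the right auxiliary lemma (\cref{auxiliary2}, rather than \cref{auxiliary1} or \cref{auxiliary3}), dictated by the presence of $q^{2s_p}$ in the definition of $H_p^{(3)}$.
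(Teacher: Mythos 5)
Your proposal is correct and follows essentially the same route as the paper: collapse the nested sum via the second Bailey chain to get $\sum_b q^{2b}(q)_{2b}\beta_b^{(2,p)}$, symmetrize one factor of $q^b$ with \cref{auxiliary2}, then split, reindex, and apply \cref{Fine} exactly as in \cref{Habiro-5}. Your final reindexing $(a,b)\mapsto(-a,-b-1)$ of the $a<0$ piece checks out and is the same (implicit) recombination the paper uses; correctly, no appeal to Euler's pentagonal number theorem is needed here, unlike in \cref{Habiro-2} and \cref{Habiro-4}.
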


\begin{proof}
	By the definition and \cref{auxiliary2}, we have
	\begin{align*}
		H_p^{(3)}(q) &= \sum_{s_p=0}^\infty q^{2s_p} (q^{s_p+1})_{s_p} (q)_{s_p} \beta_{s_p}^{(2, p)} = \sum_{b=0}^\infty q^b (q)_{2b} q^b \beta_b^{(2, p)}\\
		&= \sum_{b=0}^\infty q^b (q)_{2b} \sum_{a=-b}^b \frac{(-1)^a q^{\left(p+\frac{1}{2}\right)a^2+\left(p+\frac{1}{2}\right)a}}{(q)_{b-a} (q)_{b+a}}.
	\end{align*}
	The same calculation and \cref{Fine} yield that
	\begin{align*}
		H_p^{(3)}(q) &= \sum_{a\geq 0} (-1)^a q^{\left(p+\frac{1}{2}\right)a^2+\left(p+\frac{3}{2}\right)a} \sum_{b=0}^\infty \frac{q^{b} (q)_{2b+2a}}{(q)_{b+2a} (q)_{b}}\\
			&\qquad + \sum_{a<0} (-1)^a q^{\left(p+\frac{1}{2}\right)a^2+\left(p-\frac{1}{2}\right)a} \sum_{b=0}^\infty \frac{q^{b} (q)_{2b-2a}}{(q)_{b-2a} (q)_{b}}\\
			&= \frac{1}{(q)_\infty} \sum_{a\geq 0} (-1)^a q^{\left(p+\frac{1}{2}\right)a^2 + \left(p+\frac{3}{2}\right)a} \sum_{b=0}^\infty (-1)^b q^{\frac{3}{2} b^2 + \left(2a+\frac{3}{2} \right)b}\\
		&\qquad + \frac{1}{(q)_\infty} \sum_{a<0} (-1)^a q^{\left(p+\frac{1}{2}\right)a^2 + \left(p-\frac{1}{2}\right)a} \sum_{b=0}^\infty (-1)^b q^{\frac{3}{2}b^2 + \left(-2a+\frac{3}{2}\right)b},
	\end{align*}
	which finishes the proof.
\end{proof}

\begin{theorem}\label{Habiro-4}
	For $p \geq 1$, we have the Hecke-type expansion for $H_p^{(4)}(q)$ as
	\[
		H_p^{(4)}(q) = \frac{1}{(q)_\infty} \left(\sum_{a, b \geq 0} - \sum_{a, b < 0} \right) (-1)^{a+b} q^{\left(p+\frac{1}{2}\right)a^2 + 2ab + \frac{3}{2} b^2 + \frac{2p+1}{2} a + \frac{1}{2} b}.
	\]
\end{theorem}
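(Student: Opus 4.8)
The plan is to mirror the arguments already used for \cref{Hikami-Theorem3.5}, \cref{Habiro-2}, and \cref{Habiro-3}: express $H_p^{(4)}(q)$ through the Bailey pair $(\alpha^{(2,p)},\beta^{(2,p)})$ of \cref{Bailey-chain-2}, reshuffle the $q$-Pochhammer factors with one of the auxiliary lemmas of \cref{s2-2}, interchange the two summations, and finish with \cref{Fine}.

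Concretely, I would first record the elementary identity $(q^{n+1})_n (q)_n = (q)_{2n}$, which together with the relation $(q)_{s_p}\beta_{s_p}^{(2,p)} = \sum_{s_p = s_p \geq \cdots \geq s_1 \geq 0} \prod_{i=1}^{p-1} q^{s_i(s_i+1)} \st{s_{i+1}}{s_i}_q$ coming from \cref{Bailey-chain-2} gives
\[
	H_p^{(4)}(q) = \sum_{b=0}^\infty q^b (q^{b+1})_b (q)_b\, \beta_b^{(2,p)} = \sum_{b=0}^\infty q^b (q)_{2b}\, \beta_b^{(2,p)}.
\]
This is the same reduction that produces $H_p^{(3)}(q) = \sum_b q^{2b}(q)_{2b}\beta_b^{(2,p)}$, only with $q^b$ in place of $q^{2b}$. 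For $b \geq 1$ I would then factor $(q)_{2b} = (1-q^{2b})(q)_{2b-1}$ and apply \cref{auxiliary3} to rewrite $(1-q^{2b})\beta_b^{(2,p)}$ as $\sum_{a=-b+1}^{b}(-1)^a q^{(p+\frac12)a^2+(p-\frac12)a}/\bigl((q)_{b-a}(q)_{b+a-1}\bigr)$; the $b=0$ summand contributes the isolated constant $1$ (since $\beta_0^{(2,p)}=1$). This is precisely where the argument departs from \cref{Habiro-3}: because the weight is $q^{s_p}$ rather than $q^{2s_p}$, one needs \cref{auxiliary3} rather than \cref{auxiliary2}, and an isolated constant appears.

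Next I would interchange the order of summation, split the $a$-range into $a>0$, $a=0$, $a<0$, and in each piece shift $b$ so that it runs over $b\geq 0$. After the shift, every inner $b$-sum has exactly the shape $\sum_{m\geq 0} q^m (q)_{2m+c}/\bigl((q)_{m+c}(q)_m\bigr)$ handled by \cref{Fine}, with $c=2a-1\geq 1$ for $a>0$, $c=1$ for $a=0$, and $c=1-2a\geq 3$ for $a<0$; applying \cref{Fine} produces a factor $1/(q)_\infty$ and converts each inner sum into a theta-like series. The $a>0$ piece already carries the exponent $(p+\frac12)a^2+2ab+\frac32 b^2+\frac{2p+1}{2}a+\frac12 b$ with the correct sign $(-1)^{a+b}$, and the $a<0$ piece, after the reindexing $b\mapsto -b-1$ (which flips the sign and shifts the exponent exactly as in \cref{Hikami-Theorem3.5}), becomes the $-\sum_{a,b<0}$ term. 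Finally, the leftover constant $1$ together with the $a=0$ contribution $\frac{1}{(q)_\infty}\sum_{b\geq 0}(-1)^b q^{\frac32 b^2+\frac52 b+1}$ are merged, via Euler's pentagonal number theorem \eqref{Pentagon}, into $\frac{1}{(q)_\infty}\sum_{b\geq 0}(-1)^b q^{\frac32 b^2+\frac12 b}$, which is exactly the $a=0$ slice of $\frac{1}{(q)_\infty}\sum_{a,b\geq 0}(-1)^{a+b}q^{(p+\frac12)a^2+2ab+\frac32 b^2+\frac{2p+1}{2}a+\frac12 b}$. Assembling the three blocks yields the claimed formula.

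I do not expect a genuine obstacle here: every step is a variant of a computation already carried out in \cref{Hikami-Theorem3.5}, \cref{Habiro-2}, or \cref{Habiro-3}. The only things demanding care are clerical — checking that the three arguments fed into \cref{Fine} are nonnegative, verifying that the affine-in-$a,b$ exponents reassemble into the single quadratic form after the $b$-shifts, and tracking signs through the substitution $b\mapsto -b-1$ in the $a<0$ block and through the pentagonal-number-theorem step for the $a=0$ block. These are the same bookkeeping checks that appear in the proof of \cref{Habiro-2}, and they present no conceptual difficulty.
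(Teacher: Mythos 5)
Your proposal is correct and follows essentially the same route as the paper: reduce $H_p^{(4)}(q)$ to $\sum_b q^b (q)_{2b}\beta_b^{(2,p)}$, peel off the factor $(1-q^{2b})$ via \cref{auxiliary3}, split into $a>0$, $a=0$, $a<0$, apply \cref{Fine} to each block, and absorb the leftover constant $1$ together with the $a=0$ contribution using Euler's pentagonal number theorem \eqref{Pentagon}. The exponent bookkeeping you describe (including $c=2a-1$, $c=1$, $c=1-2a$ and the reindexing $b\mapsto -b-1$) checks out against the paper's displayed intermediate formulas.
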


\begin{proof}
	The structure of the proof is entirely the same as that for $H_p^{(2)}(q)$. First, by the definition and \cref{auxiliary3},
	\begin{align*}
		H_p^{(4)}(q) &= \sum_{s_p=0}^\infty q^{s_p} (q^{s_p+1})_{s_p} (q)_{s_p} \beta_{s_p}^{(2, p)} = 1 + \sum_{b=1}^\infty q^b (q)_{2b-1} (1-q^{2b}) \beta_b^{(2, p)}\\
			&= 1+ \sum_{b=1}^\infty q^b (q)_{2b-1} \sum_{a=-b+1}^b \frac{(-1)^a q^{\left(p+\frac{1}{2}\right)a^2+\left(p-\frac{1}{2}\right)a}}{(q)_{b-a} (q)_{b+a-1}}.
\end{align*}
	By dividing the range of the sum into $a>0$, $a=0$, and $a<0$, the equation can be transformed to
	\begin{align*}
		H_p^{(4)}(q) &= 1 + q \sum_{b=0}^\infty \frac{q^{b} (q)_{2b+1}}{(q)_{b+1} (q)_{b}} + \sum_{a>0} (-1)^a q^{\left(p+\frac{1}{2}\right)a^2+\left(p+\frac{1}{2}\right)a} \sum_{b=0}^\infty \frac{q^{b} (q)_{2b+2a-1}}{(q)_{b} (q)_{b+2a-1}}\\
			&\qquad + \sum_{a<0} (-1)^a q^{\left(p+\frac{1}{2}\right)a^2+\left(p-\frac{3}{2}\right)a+1} \sum_{b=0}^\infty  \frac{q^{b} (q)_{2b-2a+1}}{(q)_{b} (q)_{b-2a+1}}.
	\end{align*}
	Finally, \cref{Fine} and Euler's pentagonal number theorem~\eqref{Pentagon} yield the desired Hecke-type expansion.
\end{proof}

In conclusion, the goal of this section, which is to transform the five Habiro-type series into a Hecke-type series, has been achieved.

\subsection{Appendix on another conjugate Bailey pair and multiple zeta values}\label{s2-4}

Although we are in the middle of a discussion, we would like to make one more observation about a (conjugate) Bailey pair for another pair $(u, v)$. 

Here we consider the most straightforward pair $u_n = v_n = 1$ for any $n \geq 0$ instead of the $q$-Pochhammer symbol. Then using a similar approach to the Bailey transform, if sequences $(\alpha)_n$, $(\beta_n)_n$, $(\gamma_n)_n$, and $(\delta_n)_n$ satisfy suitable convergence conditions and the equation
\begin{align}\label{Bailey-11}
	\beta_n = \sum_{k=0}^{n-1} \alpha_k, \qquad \gamma_n = \sum_{k=n+1}^\infty \delta_k,
\end{align}
then we have
\begin{align}\label{trivial-Bailey}
	\sum_{n=0}^\infty \alpha_n \gamma_n = \sum_{n=1}^\infty \beta_n \delta_n.
\end{align}
In this setting, we try finding an interesting (conjugate) Bailey pair.

\begin{lemma}
	For any positive integer $m > 0$, the pair of sequences defined by
	\[
		\gamma_n = \frac{1}{m} \frac{n! m!}{(n+m)!}, \qquad \delta_n = \frac{1}{n} \frac{n! m!}{(n+m)!}
	\]
	satisfies the condition in~\eqref{Bailey-11}.
\end{lemma}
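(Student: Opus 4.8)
The plan is to establish the single relation that is actually at issue here, namely $\gamma_n = \sum_{k=n+1}^\infty \delta_k$ for all $n \ge 0$; the other equation in \eqref{Bailey-11} is not claimed, since $(\gamma,\delta)$ is only required to be a \emph{conjugate} Bailey pair for the trivial choice $u_n = v_n = 1$. First I would record the closed form $\delta_k = \dfrac{(k-1)!\,m!}{(k+m)!}$ valid for $k \ge 1$, obtained simply by cancelling the factor $k$ in $k!/k = (k-1)!$. This also makes clear that the ill-defined term $\delta_0$ never enters any sum under consideration, all of which run over indices $k \ge n+1 \ge 1$.

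The heart of the proof is the telescoping identity
\[
	\delta_k = \gamma_{k-1} - \gamma_k \qquad (k \ge 1),
\]
which I would verify by a one-line factorial manipulation: writing
\[
	\gamma_{k-1} - \gamma_k = \frac{m!}{m}\Bigl(\frac{(k-1)!}{(k-1+m)!} - \frac{k!}{(k+m)!}\Bigr)
		= \frac{m!}{m}\cdot\frac{(k-1)!\,\bigl((k+m) - k\bigr)}{(k+m)!},
\]
where I used $(k+m)! = (k+m)(k-1+m)!$; the bracket collapses to $m$, which cancels the prefactor $1/m$ and returns exactly $\delta_k$.

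Summing over $k$, for any $N > n$ we get $\sum_{k=n+1}^N \delta_k = \sum_{k=n+1}^N(\gamma_{k-1} - \gamma_k) = \gamma_n - \gamma_N$. Since
\[
	\gamma_N = \frac{1}{m}\,\frac{N!\,m!}{(N+m)!} = \frac{(m-1)!}{(N+1)(N+2)\cdots(N+m)} = O\bigl(N^{-m}\bigr)
\]
tends to $0$ as $N \to \infty$ — and here the hypothesis $m \ge 1$ is precisely what guarantees both that this tail vanishes and that the series $\sum_k \delta_k$ converges at all — letting $N \to \infty$ yields $\gamma_n = \sum_{k=n+1}^\infty \delta_k$, which is the asserted condition in \eqref{Bailey-11}.

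I do not expect a genuine obstacle: the entire content lies in spotting the telescoping, and the only points needing a word of care are the harmless exclusion of the undefined term $\delta_0$ and the (immediate, given $m \ge 1$) decay of $\gamma_N$. One could additionally note that feeding this conjugate pair together with a Bailey pair $(\alpha,\beta)$ into the transform \eqref{trivial-Bailey} produces the advertised multiple zeta value identity, but that belongs to the discussion following the lemma rather than to its proof.
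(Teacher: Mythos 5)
Your proof is correct and follows essentially the same route as the paper's: the paper's one-line argument is precisely the telescoping sum $\sum_{k=n+1}^\infty \delta_k = \frac{1}{m}\sum_{k=n+1}^\infty\bigl(\frac{(k-1)!\,m!}{(k-1+m)!} - \frac{k!\,m!}{(k+m)!}\bigr) = \gamma_n$, which is your identity $\delta_k = \gamma_{k-1}-\gamma_k$ summed up. Your additional remarks (verifying the telescoping explicitly, noting $\gamma_N \to 0$, and excluding the undefined $\delta_0$) merely make explicit what the paper leaves implicit.
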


\begin{proof}
	By a telescoping sum, we have
	\[
		\sum_{k=n+1}^\infty \delta_k = \frac{1}{m} \sum_{k=n+1}^\infty \left(\frac{(k-1)! m!}{(k-1+m)!} - \frac{k! m!}{(k+m)!} \right) = \frac{1}{m} \frac{n! m!}{(n+m)!} = \gamma_n.
	\]
\end{proof}

This lemma is a critical identity in Seki--Yamamoto's work~\cite{SekiYamamoto2019} on the proof of duality of multiple zeta values. What is the ``unit Bailey pair/chain" in this case? A simple observation yields that if a pair $(\alpha, \beta)$ satisfies the condition~\eqref{Bailey-11}, the equation \eqref{trivial-Bailey} implies an identity on the sequence $(\alpha_n)_n$,
\begin{align}\label{alpha-eq}
	\sum_{n=0}^\infty \frac{\alpha_n}{m} \frac{n! m!}{(n+m)!} = \sum_{n=1}^\infty \frac{\beta_n}{n} \frac{n! m!}{(n+m)!} = \sum_{n = 1}^\infty \left(\sum_{k=0}^{n-1} \frac{\alpha_k}{n} \right) \frac{n! m!}{(n+m)!}.
\end{align}
Putting $\alpha^{(1)}_0 = 0$ and $\alpha^{(1)}_n = \sum_{k=0}^{n-1} \alpha_k/n$ and applying \eqref{alpha-eq}, we have
\[
	\sum_{n=0}^\infty \frac{\alpha_n^{(1)}}{m} \frac{n! m!}{(n+m)!} = \sum_{n=1}^\infty \left(\sum_{k=0}^{n-1} \frac{\alpha_k^{(1)}}{n} \right) \frac{n! m!}{(n+m)!} = \sum_{n=1}^\infty \left(\sum_{n > k > l \geq 0} \frac{\alpha_l}{kn} \right) \frac{n! m!}{(n+m)!}.
\]
By repeating this process, we obtain the following.
\begin{lemma}
	For any sequence $(\alpha_n)_n$ with suitable convergence condition, it holds that
	\[
		\frac{1}{m^p} \sum_{n=0}^\infty \alpha_n \frac{n!m!}{(n+m)!} = \sum_{n=0}^\infty \sum_{n = s_p > s_{p-1} > \cdots > s_0 \geq 0} \frac{\alpha_{s_0}}{s_p s_{p-1} \cdots s_1} \frac{n! m!}{(n+m)!}.
	\]
\end{lemma}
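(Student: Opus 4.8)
The plan is a single induction on $p$, with the single-step identity \eqref{alpha-eq} as the engine. First I would isolate that engine cleanly: for a sequence $\gamma = (\gamma_n)_n$ write $\gamma^{(1)}$ for the sequence with $\gamma^{(1)}_0 = 0$ and $\gamma^{(1)}_n = \tfrac{1}{n}\sum_{k=0}^{n-1}\gamma_k$ for $n \geq 1$. Then \eqref{alpha-eq}, read for a general $\gamma$ in place of $\alpha$, says exactly
\[
	\frac{1}{m}\sum_{n=0}^\infty \gamma_n\,\frac{n!\,m!}{(n+m)!} = \sum_{n=0}^\infty \gamma^{(1)}_n\,\frac{n!\,m!}{(n+m)!},
\]
the term $n=0$ on the right contributing nothing since $\gamma^{(1)}_0 = 0$. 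Setting $\alpha^{(0)} = \alpha$ and $\alpha^{(j+1)} = (\alpha^{(j)})^{(1)}$ and iterating this identity $p$ times gives
\[
	\frac{1}{m^p}\sum_{n=0}^\infty \alpha_n\,\frac{n!\,m!}{(n+m)!} = \sum_{n=0}^\infty \alpha^{(p)}_n\,\frac{n!\,m!}{(n+m)!}.
\]

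Second I would identify $\alpha^{(p)}_n$ explicitly by an inner induction on $p$. For $p=0$ the claim $\alpha^{(0)}_n = \alpha_n$ is the empty-chain case ($s_0 = n$, empty denominator product). For the inductive step, the definition of $(\cdot)^{(1)}$ and the inductive hypothesis give
\[
	\alpha^{(p)}_n = \frac{1}{n}\sum_{k=0}^{n-1}\alpha^{(p-1)}_k = \frac{1}{n}\sum_{k=0}^{n-1}\ \sum_{k = s_{p-1} > \cdots > s_0 \geq 0}\frac{\alpha_{s_0}}{s_{p-1}\cdots s_1},
\]
and absorbing the outer index as $s_p = n$ turns the right-hand side into $\sum_{n = s_p > s_{p-1} > \cdots > s_0 \geq 0}\frac{\alpha_{s_0}}{s_p s_{p-1}\cdots s_1}$, which is the empty sum (hence $0$) when $n < p$, consistent with the fact that $\alpha^{(p)}_n$ is built only from $\alpha^{(p-1)}_k$ that vanish for $k < p-1$. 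Substituting this closed form into the previous display yields the asserted identity, the outer index $n$ there playing the role of $s_p$.

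Third I would address the only delicate point, namely that invoking \eqref{alpha-eq} for $\alpha^{(1)}, \alpha^{(2)}, \dots$ requires each $\alpha^{(j)}$ to satisfy the same ``suitable convergence condition'' as $\alpha$, and that the finite-depth multiple sum on the right may be rearranged freely. Since $\alpha^{(1)}_n$ is an average of $\alpha_0,\dots,\alpha_{n-1}$ we have $|\alpha^{(1)}_n| \leq \max_{0 \leq k < n}|\alpha_k|$, so any decay/summability hypothesis strong enough to make the original series and all the iterated sums absolutely convergent — e.g.\ $\alpha_n = O(r^n)$ with $|r|<1$, or more generally $\sum_n |\alpha_n|\,\tfrac{n!\,m!}{(n+m)!} < \infty$ with a little room to spare — is inherited by every $\alpha^{(j)}$ and legitimizes the reorderings. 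Everything else is the bookkeeping of chains and empty products above; I expect no conceptual obstacle beyond stating the convergence hypothesis precisely enough to justify the term-by-term manipulations.
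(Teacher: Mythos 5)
Your proposal is correct and follows essentially the same route as the paper: the paper also obtains the lemma by iterating the single-step identity \eqref{alpha-eq} $p$ times (it simply writes ``by repeating this process''), with the iterates $\alpha^{(j)}$ defined exactly as you define them. Your write-up is in fact more careful than the paper's, since you make the induction identifying $\alpha^{(p)}_n$ with the multiple sum explicit and note that the convergence hypothesis is inherited by each iterate.
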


We now let $\alpha_n = \delta_{n,0}$. Then the above lemma yields that
\[
	\frac{1}{m^p} = \sum_{s_p > s_{p-1} > \cdots > s_1 > 0} \frac{1}{s_p s_{p-1} \cdots s_1} \frac{s_p! m!}{(s_p+m)!}
\]
for any $p > 0$. Moreover, by taking the sum over $m$, we have
\begin{align*}
	\zeta(p+1) &= \sum_{m=1}^\infty \frac{1}{m^{p+1}} = \sum_{s_p > s_{p-1} > \cdots > s_1 > 0} \frac{1}{s_{p-1} \cdots s_1} \sum_{m=1}^\infty \frac{(s_p-1)!(m-1)!}{(s_p+m)!}\\
	&= \sum_{s_p > s_{p-1} > \cdots > s_1 > 0} \frac{1}{s_p^2 s_{p-1} \cdots s_1} = \zeta(2, \underbrace{1, \dots, 1}_{p-1}),
\end{align*}
the particular case of the sum formula of multiple zeta values~\cite{Granville1997}. We also note that the factor $(n-1)! (m-1)!/(n+m)!$ appears in Cloitre and Oloa's expressions of the Riemann zeta values $\zeta(2)$ and $\zeta(3)$ (see Kawamura--Maesaka--Seki's recent work~\cite{KawamuraMaesakaSeki2022}). It will be interesting to see what kind of identities can be obtained by considering various pairs $(u,v)$, $(\gamma, \delta)$, and $(\alpha, \beta)$.

\section{False theta functions}\label{s3}

False theta functions also come from the work of Rogers~\cite{Rogers1917London}. The defining equation looks like the ordinary theta functions but contains an extra sign term that breaks the modular property of the theta functions. The name ``false theta functions" also appeared in Ramanujan's last letter to Hardy, and it became known at about the same time as Ramanujan's ``mock theta functions". However, as Sills~\cite{Sills2018} points out, mock theta functions have long been the subject of active research, starting with Watson~\cite{Watson1936}, whereas false theta functions have not received much attention until recently. One of the reasons why the study of false theta functions has become active in recent years is due to their relationship with quantum invariants, as revealed by Lawrence--Zagier~\cite{LawrenceZagier1999}. Their work was subsequently generalized extensively by Hikami~\cite{Hikami2005IJM, Hikami2006JMP, Hikami2007} et al., and the advent of the notion of ``quantum modular forms" introduced by Zagier~\cite{Zagier2010} has further accelerated the research. More recently, as a counterpart to Zwegers' discovery~\cite{Zwegers2002} of the modular aspect of the mock theta functions, a framework for modular properties of false theta functions has been revealed by Bringmann--Nazaroglu et al.~\cite{BringmannNazaroglu2019, BKMN2023} and Goswami--Osburn~\cite{GoswamiOsburn2021} (see also Matsusaka--Terashima~\cite{MatsusakaTerashima2021}). 

Let us define the false theta functions. Let $A$ be a symmetric $r \times r$-matrix with integer coefficients, which is positive definite. We consider a bilinear form $B: \R^r \times \R^r \to \R$, $B(\bm{x}, \bm{y}) = \bm{x}^T A \bm{y}$ and the associated quadratic form $Q(\bm{x}) = \frac{1}{2} B(\bm{x}, \bm{x})$. We take a lattice $L \subset \R^r$ of rank $r$ such that $B(L \times L) \subset \Z$. Then the \emph{dual lattice} $L^*$ is defined by
\[
	L^* = \{\bm{x} \in \R^r \mid B(\bm{x}, \bm{y}) \in \Z \text{ for any $\bm{y} \in L$}\}.
\]
\begin{definition}\label{def-false-theta}
	For an arbitrary vector $\bm{c} \in \R^r$ satisfying $2Q(\bm{c}) = 1$ and $\bm{\mu} \in L^*$, the \emph{false theta function} $\widetilde{\theta}_{\bm{\mu}, \bm{c}}(\tau) = \widetilde{\theta}_{L, B, \bm{\mu}, \bm{c}}(\tau)$ is defined by
	\[
		\widetilde{\theta}_{\bm{\mu}, \bm{c}}(\tau) = \sum_{\bm{n} \in L + \bm{\mu}} \sgn(B(\bm{n}, \bm{c})) q^{Q(\bm{n})},
	\]
	where $\tau \in \bbH = \{\tau \in \C \mid \Im(\tau) > 0\}$, $q^{Q(\bm{n})} = e^{2\pi i Q(\bm{n}) \tau}$, and $\sgn: \R \to \{-1, 0, 1\}$ is the usual sign function with $\sgn(0) = 0$.
\end{definition}

This article considers only the cases $r=1, 2$ and only special lattices and bilinear forms.

\subsection{One dimensional case}

This subsection aims to recall Hikami's function $\widetilde{\Phi}_{\bm{p}}^{\bm{\ell}}(\tau)$ in terms of false theta functions and re-prove \cref{Hikami-Prop3}. We let $A=(1)$ and $L = \sqrt{M} \Z$ for a positive integer $M > 0$. Then the bilinear form is $B(x,y) = xy$, and the dual lattice is given by $L^* = (1/\sqrt{M}) \Z$. In this setting, the false theta function is defined as
\[
	\widetilde{\theta}_{\mu, c}(\tau) = \sum_{n \in \sqrt{M} \Z + \mu} \sgn(n) q^{\frac{n^2}{2}}
\]
for $\mu \in (1/\sqrt{M})\Z/\sqrt{M}\Z \cong \Z/M\Z$ and $c = 1$. To clarify necessary and unnecessary subscripts, we redefine the false theta function.

\begin{definition}
	For a positive integer $M > 0$ and $\mu \in \{0, 1, \dots, M-1\}$, we define
	\[
		\widetilde{\theta}_{M, \mu}^{(1)}(\tau) = \sum_{n \equiv \mu\ (M)} \sgn(n) q^{\frac{n^2}{2M}}.
	\]
\end{definition}

Although not directly used in this article, the following modular transformation formulas are known by Bringmann--Nazaroglu~\cite{BringmannNazaroglu2019} (see also~\cite{MatsusakaTerashima2021}).
\begin{proposition}\label{S-trans-ft1}
	For $\Re(\tau) \neq 0$, we have
	\begin{align*}
		&\widetilde{\theta}_{M, \mu}^{(1)} \left(-\frac{1}{\tau} \right) + \sgn(\Re(\tau)) \frac{(-i\tau)^{1/2}}{\sqrt{M}} \sum_{\nu =0}^{M-1} e^{2\pi i \frac{\mu \nu}{M}} \widetilde{\theta}_{M, \nu}^{(1)} (\tau) = \frac{-i}{\sqrt{M}} \int_0^{i\infty} \frac{\vartheta_{M,\mu}^{(1)} (z)}{\sqrt{-i (z+1/\tau)}} dz,
	\end{align*} 
	where $\vartheta_{M,\mu}^{(1)}(\tau)$ is the classical holomorphic theta function of weight $3/2$ defined by
	\[
		\vartheta_{M, \mu}^{(1)}(\tau) = \sum_{n \equiv \mu\ (M)} n q^{\frac{n^2}{2M}}.
	\]
\end{proposition}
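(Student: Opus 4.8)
The plan is to reduce the claimed identity to a standard Gauss-sum–type transformation by applying Poisson summation to the non-modular ``false'' sum after splitting off the sign. Concretely, I would first write the left-hand side as a difference: $\widetilde{\theta}_{M,\mu}^{(1)}(\tau) = \sum_{n \equiv \mu\ (M)} \sgn(n) q^{n^2/(2M)}$, and use the elementary identity $\sgn(n) = \mathrm{sgn}_\varepsilon(n) \cdot \text{(something real-analytic)}$; more precisely, I would follow the approach of Bringmann--Nazaroglu and write $\sgn(x) = \frac{2}{\sqrt{\pi}}\int_0^\infty e^{-\pi t^2 x^2}\,x\sqrt{\pi}\,dt$ (an integral representation of the sign function), so that the false theta function becomes a $t$-integral of a genuine unary theta function of weight $3/2$, namely $\vartheta_{M,\mu}^{(1)}$ rescaled. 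This is the step that converts the ``false'' object into something to which Poisson summation / the classical theta transformation applies.

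Next I would perform the $S$-transformation on the resulting integrand. Under $\tau \mapsto -1/\tau$, the holomorphic weight-$3/2$ theta function $\vartheta_{M,\mu}^{(1)}$ transforms (via Poisson summation in $n \equiv \mu \pmod M$, i.e.\ summing $n = \mu + Mk$ over $k\in\Z$) into a sum $\frac{1}{\sqrt M}(-i\tau)^{3/2}\sum_{\nu} e^{2\pi i \mu\nu/M}\vartheta_{M,\nu}^{(1)}$ up to the usual Gauss-sum normalization — this is entirely classical. Feeding this back into the integral representation and swapping the order of the $t$-integral and the $k$-summation, one of the two pieces reassembles into $\sum_\nu e^{2\pi i \mu\nu/M}\widetilde{\theta}_{M,\nu}^{(1)}(\tau)$ times the Jacobian factor $(-i\tau)^{1/2}/\sqrt M$, while the ``non-closing'' piece — coming from the fact that $\sgn$ is not smooth at $0$, equivalently that the $t$-integral does not close up to a full real line — becomes precisely the error integral $\frac{-i}{\sqrt M}\int_0^{i\infty} \vartheta_{M,\mu}^{(1)}(z)/\sqrt{-i(z+1/\tau)}\,dz$ on the right-hand side. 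The appearance of $\sgn(\Re(\tau))$ is dictated by which half-plane the contour for the $t$-integral can be rotated into; this is where the hypothesis $\Re(\tau)\neq 0$ enters, since on the imaginary axis the rotation is ambiguous and the formula degenerates.

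The main obstacle I anticipate is making the contour manipulations rigorous: one must justify interchanging $\sum_k$ with $\int_0^\infty dt$, track the branch of $(-i\tau)^{1/2}$ and of $\sqrt{-i(z+1/\tau)}$ consistently across the rotation, and verify convergence of the error integral near $z = i\infty$ (where $\vartheta_{M,\mu}^{(1)}(z)$ decays like $q^{\mu^2/(2M)}$ if $\mu\neq 0$, and one must treat the $\mu=0$ term — for which $\vartheta_{M,0}^{(1)}$ has constant term $0$ anyway — separately) and near the lower endpoint $z=0$ (where the singularity $1/\sqrt{-i(z+1/\tau)}$ is integrable as long as $\tau\notin i\R_{<0}$, guaranteed by $\Re(\tau)\neq 0$). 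Since the statement is attributed to Bringmann--Nazaroglu and is not used later in the paper, I would in fact simply cite \cite{BringmannNazaroglu2019} for the full argument; the sketch above records how one would reconstruct it. The routine Gauss-sum bookkeeping (checking that the $e^{2\pi i\mu\nu/M}$ weights and the $1/\sqrt M$ normalizations match) I would relegate to a one-line remark rather than carry out in detail.
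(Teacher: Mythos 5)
Your outline is correct and is essentially the argument the paper relies on: the proposition is stated without proof, with a citation to Bringmann--Nazaroglu, but the final subsection of the paper carries out precisely your strategy for the rank-two analogue --- the Eichler-integral representation $\widetilde{\theta}^{(1)}_{M,\mu}(\tau)=\frac{-i}{\sqrt{M}}\int_\tau^{i\infty}\vartheta^{(1)}_{M,\mu}(z)\,(-i(z-\tau))^{-1/2}\,dz$ (your $\sgn$-as-Gaussian-integral step), Poisson summation for the weight-$3/2$ theta function, splitting the contour at $0$, and tracking the branch of the square root to produce the $\sgn(\Re(\tau))$ factor. There is no gap worth flagging, and deferring the bookkeeping to \cite{BringmannNazaroglu2019} is exactly what the author does.
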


In the above transformation formula, a remarkable feature of the false theta functions is that an error term expressed by the integral of a modular form appears. A similar phenomenon is observed in the transformation formula for the mock theta functions presented by Watson~\cite{Watson1936}. This similarity may be a part of the ``mock vs. false" phenomenon discussed by Lawrence--Zagier~\cite{LawrenceZagier1999}, Zwegers~\cite[Conjecture 2.2]{Zwegers2001}, and Hikami~\cite{Hikami2005RCD} et al., but many mysteries remain.

Since we will use the transformation formulas for the ordinary theta functions later, we review the definition and the claim together.

\begin{lemma}\label{theta-modular}
	For a positive even integer $M > 0$ and $\mu \in \{0, 1, \dots, M-1\}$, the ordinary theta function defined by
	\[
		\theta_{M,\mu}^{(1)}(\tau) = \sum_{n \equiv \mu\ (M)} q^{\frac{n^2}{2M}}
	\]
	satisfies
	\begin{align*}
		\theta_{M,\mu}^{(1)} (\tau+1) &= e^{2\pi i \frac{\mu^2}{2M}} \theta_{M, \mu}^{(1)}(\tau),\\
		\theta_{M,\mu}^{(1)} \left(-\frac{1}{\tau} \right) &= \frac{(-i\tau)^{1/2}}{\sqrt{M}} \sum_{\nu=0}^{M-1} e^{2\pi i \frac{\mu \nu}{M}} \theta_{M,\nu}^{(1)}(\tau).
	\end{align*}
\end{lemma}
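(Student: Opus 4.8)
The plan is to prove the two transformation formulas for the ordinary theta function
\[
	\theta_{M,\mu}^{(1)}(\tau) = \sum_{n \equiv \mu\ (M)} q^{\frac{n^2}{2M}}
\]
by elementary manipulation and Poisson summation, exactly as in the classical theory of Jacobi theta functions.

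For the translation formula $\theta_{M,\mu}^{(1)}(\tau+1) = e^{2\pi i \mu^2/(2M)} \theta_{M,\mu}^{(1)}(\tau)$, I would simply observe that replacing $\tau$ by $\tau+1$ multiplies the $n$-th summand by $e^{2\pi i n^2/(2M)}$, so it suffices to check that $n^2 \equiv \mu^2 \pmod{2M}$ whenever $n \equiv \mu \pmod{M}$. Writing $n = \mu + Mk$ gives $n^2 = \mu^2 + 2M\mu k + M^2 k^2$, and since $M$ is even, $M^2 k^2 \equiv 0 \pmod{2M}$; hence $n^2 \equiv \mu^2 \pmod{2M}$ and the phase $e^{2\pi i n^2/(2M)} = e^{2\pi i \mu^2/(2M)}$ can be pulled out of the sum. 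This is the easy half.

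For the modular inversion formula, I would write $\theta_{M,\mu}^{(1)}(\tau) = \sum_{k \in \Z} f(k)$ with $f(x) = e^{2\pi i (\mu + Mx)^2 \tau/(2M)} = e^{\pi i \tau M (x + \mu/M)^2}$, a Gaussian, and apply the Poisson summation formula $\sum_{k \in \Z} f(k) = \sum_{\nu \in \Z} \widehat{f}(\nu)$. Computing the Fourier transform of the shifted Gaussian via the standard identity $\int_{\R} e^{\pi i \tau M t^2} e^{-2\pi i \xi t}\, dt = (-i M\tau)^{-1/2} e^{-\pi i \xi^2/(M\tau)}$ (valid for $\tau \in \bbH$ with the principal branch, since $\Im(\tau)>0$ guarantees decay), together with the shift $x \mapsto x + \mu/M$ which produces a factor $e^{2\pi i \mu \nu/M}$, yields
\[
	\theta_{M,\mu}^{(1)}(\tau) = \frac{(-iM\tau)^{-1/2}}{1} \sum_{\nu \in \Z} e^{2\pi i \frac{\mu\nu}{M}} e^{-\frac{\pi i \nu^2}{M\tau}}.
\]
Then I would split the sum over $\nu \in \Z$ into residue classes $\nu \equiv r \pmod{M}$; writing $\nu = r + Mj$ and noting $e^{2\pi i \mu\nu/M} = e^{2\pi i \mu r/M}$ and $-\pi i \nu^2/(M\tau) = 2\pi i \cdot \frac{(r+Mj)^2}{2M}\cdot(-1/\tau)$, the inner sum over $j$ reassembles to $\theta_{M,r}^{(1)}(-1/\tau)$. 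Rearranging, applying this identity at $-1/\tau$ in place of $\tau$, and cleaning up the factor $(-iM\tau)^{-1/2}$ against $\sqrt{M}$ gives precisely the stated formula $\theta_{M,\mu}^{(1)}(-1/\tau) = M^{-1/2}(-i\tau)^{1/2} \sum_{\nu=0}^{M-1} e^{2\pi i \mu\nu/M} \theta_{M,\nu}^{(1)}(\tau)$.

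The main obstacle is bookkeeping rather than conceptual: one must be careful with the branch of the square root (using $\Im(\tau)>0$ so that $-iM\tau$ lies in the right half-plane and the principal branch is unambiguous), with the convergence justifying Poisson summation (guaranteed since the Gaussian and its transform are Schwartz for fixed $\tau \in \bbH$), and with the rescaling that converts $(-iM\tau)^{-1/2}$ evaluated at argument $-1/\tau$ into $M^{-1/2}(-i\tau)^{1/2}$. Since these are entirely standard facts about Jacobi theta functions, I expect the write-up to be short, and indeed the paper may simply cite a standard reference rather than reproduce the Poisson summation computation in full.
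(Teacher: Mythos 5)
Your proof is correct. The paper itself gives no proof of \cref{theta-modular} --- it is stated as a standard fact about theta functions --- and your argument (the congruence $n^2\equiv\mu^2\pmod{2M}$ for the $T$-transformation, using that $M$ is even, and Poisson summation for the shifted Gaussian followed by the substitution $\tau\mapsto -1/\tau$ for the $S$-transformation) is exactly the classical derivation one would cite; the branch and normalization bookkeeping, including $(iM/\tau)^{-1/2}=(-i\tau)^{1/2}/\sqrt{M}$ on the principal branch for $\tau\in\bbH$, all checks out.
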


We will now rewrite the function $\widetilde{\Phi}_{(p_1, p_2, p_3)}^{(\ell_1, \ell_2, \ell_3)} (\tau)$ that Hikami considered in~\cite{Hikami2005IJM, Hikami2007} in terms of the false theta functions we have just prepared. Let $\bm{p} = (p_1, p_2, p_3)$ be a triple of pairwise coprime positive integers, and $P = p_1 p_2 p_3$. For any triple $\bm{\ell} = (\ell_1, \ell_2, \ell_3) \in \Z^3$ satisfying $0 < \ell_j < p_j$, we define an odd periodic function $\chi_{\bm{p}}^{\bm{\ell}}: \Z/2P\Z \to \Z$ by
\begin{align}\label{period-chi}
	\chi_{\bm{p}}^{\bm{\ell}} (n) = \begin{cases}
		-\varepsilon_1 \varepsilon_2 \varepsilon_3 &\text{if } n \equiv P \left(1 + \sum_{j=1}^3 \frac{\varepsilon_j \ell_j}{p_j} \right) \pmod{2P},\\
		0 &\text{if otherwise},
	\end{cases}
\end{align}
where $\bm{\varepsilon} = (\varepsilon_1, \varepsilon_2, \varepsilon_3) \in \{\pm 1\}^3$. Then the function $\widetilde{\Phi}_{\bm{p}}^{\bm{\ell}}(\tau)$ is defined by
\[
	\widetilde{\Phi}_{\bm{p}}^{\bm{\ell}} (\tau) = \sum_{n=0}^\infty \chi_{\bm{p}}^{\bm{\ell}}(n) q^{\frac{n^2}{4P}}.
\]

\begin{lemma}\label{Eichler-false}
	The function $\widetilde{\Phi}_{\bm{p}}^{\bm{\ell}} (\tau)$ is expressed in terms of false theta functions as
	\[
		\widetilde{\Phi}_{\bm{p}}^{\bm{\ell}} (\tau) = -\frac{1}{2} \sum_{\bm{\varepsilon} \in \{\pm 1\}^3} \varepsilon_1 \varepsilon_2 \varepsilon_3 \widetilde{\theta}_{2P, \mu(\bm{\varepsilon}, \bm{\ell})}^{(1)}(\tau),
	\]
	where we put
	\[
		\mu(\bm{\varepsilon}, \bm{\ell}) \equiv P \left(1 + \sum_{j=1}^3 \frac{\varepsilon_j \ell_j}{p_j} \right) \pmod{2P}.
	\]
\end{lemma}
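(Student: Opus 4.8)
The plan is to unfold all the definitions and reorganize the single sum defining $\widetilde{\Phi}_{\bm{p}}^{\bm{\ell}}(\tau)$ as a sum over residue classes modulo $2P$, each class contributing one of the one-dimensional false theta functions $\widetilde{\theta}^{(1)}_{2P,\mu}$. First I would record (or re-derive) that $\chi_{\bm{p}}^{\bm{\ell}}$ is odd modulo $2P$: replacing $n$ by $-n$ in the congruence $n\equiv P(1+\sum_{j}\varepsilon_{j}\ell_{j}/p_{j})\pmod{2P}$ flips all three signs $\varepsilon_{j}$, hence multiplies $-\varepsilon_{1}\varepsilon_{2}\varepsilon_{3}$ by $(-1)^{3}=-1$; in particular $\chi_{\bm{p}}^{\bm{\ell}}(0)=0$. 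Consequently $\widetilde{\Phi}_{\bm{p}}^{\bm{\ell}}(\tau)=\sum_{n\ge 1}\chi_{\bm{p}}^{\bm{\ell}}(n)q^{n^{2}/4P}$, and folding the $n<0$ terms onto the $n>0$ terms via oddness gives the symmetric form
\[
	\widetilde{\Phi}_{\bm{p}}^{\bm{\ell}}(\tau)=\frac{1}{2}\sum_{n\in\Z}\sgn(n)\,\chi_{\bm{p}}^{\bm{\ell}}(n)\,q^{n^{2}/4P},
\]
all rearrangements being harmless since $q^{n^{2}/4P}$ decays super-exponentially for $\tau\in\bbH$.

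Next, since $\chi_{\bm{p}}^{\bm{\ell}}$ is $2P$-periodic I would split the sum over $n$ according to its residue class $\mu\in\{0,1,\dots,2P-1\}$ and recognize the inner sums as false theta functions:
\[
	\widetilde{\Phi}_{\bm{p}}^{\bm{\ell}}(\tau)=\frac{1}{2}\sum_{\mu=0}^{2P-1}\chi_{\bm{p}}^{\bm{\ell}}(\mu)\sum_{n\equiv\mu\ (2P)}\sgn(n)\,q^{n^{2}/4P}=\frac{1}{2}\sum_{\mu=0}^{2P-1}\chi_{\bm{p}}^{\bm{\ell}}(\mu)\,\widetilde{\theta}^{(1)}_{2P,\mu}(\tau).
\]
Then I substitute the case-definition of $\chi_{\bm{p}}^{\bm{\ell}}$ written as $\chi_{\bm{p}}^{\bm{\ell}}(\mu)=-\sum_{\bm{\varepsilon}\in\{\pm1\}^{3}}\varepsilon_{1}\varepsilon_{2}\varepsilon_{3}\,\bigl[\mu\equiv\mu(\bm{\varepsilon},\bm{\ell})\bmod 2P\bigr]$ and interchange the two finite sums over $\mu$ and $\bm{\varepsilon}$; the Iverson bracket collapses the $\mu$-sum, and since $\widetilde{\theta}^{(1)}_{2P,\mu}(\tau)$ depends only on $\mu$ modulo $2P$, what remains is exactly $-\frac{1}{2}\sum_{\bm{\varepsilon}}\varepsilon_{1}\varepsilon_{2}\varepsilon_{3}\,\widetilde{\theta}^{(1)}_{2P,\mu(\bm{\varepsilon},\bm{\ell})}(\tau)$, which is the asserted identity.

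The argument is a purely formal manipulation, so there is no genuine obstacle; the one point deserving a remark is that the case-definition of $\chi_{\bm{p}}^{\bm{\ell}}$ is well posed, i.e. the eight residues $\mu(\bm{\varepsilon},\bm{\ell})$, $\bm{\varepsilon}\in\{\pm1\}^{3}$, are pairwise distinct modulo $2P$, so that $\chi_{\bm{p}}^{\bm{\ell}}$ indeed takes values in $\{-1,0,1\}$. I would verify this from the pairwise coprimality of $p_{1},p_{2},p_{3}$: an equality $\mu(\bm{\varepsilon},\bm{\ell})\equiv\mu(\bm{\varepsilon}',\bm{\ell})\pmod{2P}$ forces $\sum_{j}\tfrac{\varepsilon_{j}-\varepsilon_{j}'}{2}\,\ell_{j}\,\tfrac{P}{p_{j}}\equiv0\pmod{P}$, and reducing this modulo each $p_{i}$ kills all terms but the $i$-th (since $p_{i}\mid P/p_{j}$ for $j\ne i$), leaving $\tfrac{\varepsilon_{i}-\varepsilon_{i}'}{2}\ell_{i}\,\tfrac{P}{p_{i}}\equiv0\pmod{p_{i}}$, hence $\tfrac{\varepsilon_{i}-\varepsilon_{i}'}{2}\ell_{i}\equiv0\pmod{p_{i}}$ because $\gcd(P/p_{i},p_{i})=1$; as $0<\ell_{i}<p_{i}$ and $\tfrac{\varepsilon_{i}-\varepsilon_{i}'}{2}\in\{-1,0,1\}$, this forces $\varepsilon_{i}=\varepsilon_{i}'$. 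In fact the displayed identity holds verbatim even without this observation, provided one simply reads $\chi_{\bm{p}}^{\bm{\ell}}$ as $-\sum_{\bm{\varepsilon}}\varepsilon_{1}\varepsilon_{2}\varepsilon_{3}\,[\,\cdot\equiv\mu(\bm{\varepsilon},\bm{\ell})\,]$.
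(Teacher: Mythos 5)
Your proposal is correct and follows essentially the same route as the paper: use the oddness of $\chi_{\bm{p}}^{\bm{\ell}}$ to symmetrize the sum over $n \in \Z$ with a $\sgn(n)$ factor, then split according to the residue classes $\mu(\bm{\varepsilon}, \bm{\ell}) \pmod{2P}$ to recognize the one-dimensional false theta functions. The extra checks you include (well-posedness of the case-definition via pairwise coprimality, and the Iverson-bracket reformulation) are sound but not needed in the paper, which simply takes the oddness and periodicity of $\chi_{\bm{p}}^{\bm{\ell}}$ as given from its definition.
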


\begin{proof}
	Since $\chi_{\bm{p}}^{\bm{\ell}}(-n) = -\chi_{\bm{p}}^{\bm{\ell}}(n)$ holds for any $n \in \Z$, we have
	\begin{align*}
		\widetilde{\Phi}_{\bm{p}}^{\bm{\ell}} (\tau) &= \frac{1}{2} \sum_{n \in \Z} \sgn(n) \chi_{\bm{p}}^{\bm{\ell}}(n) q^{\frac{n^2}{4P}}\\
			&= \frac{1}{2} \sum_{\bm{\varepsilon} \in \{\pm 1\}^3} \sum_{n \equiv \mu(\bm{\varepsilon}, \bm{\ell})\ (2P)} \sgn(n) (-\varepsilon_1 \varepsilon_2 \varepsilon_3)  q^{\frac{n^2}{4P}},
	\end{align*}
	which concludes the proof.
\end{proof}

In this setting, we re-prove the limit formula of $\widetilde{\Phi}_{\bm{p}}^{\bm{\ell}} (\tau)$ as $\tau \to 1/N$ shown by Hikami~\cite[Proposition 3]{Hikami2005IJM}. To this end, we recall the following two analytic lemmas by Lawrence--Zagier~\cite{LawrenceZagier1999}.

\begin{lemma}
	Let $C: \Z \to \C$ be a periodic function whose period is $M$. If its mean value equals $0$, that is,
	\[
		\sum_{m=1}^M C(m) = 0,
	\]
	then the Dirichlet series $L(s, C) = \sum_{m=1}^\infty C(m)m^{-s}$ defines a holomorphic function in $\Re(s) > 1$ and is analytically continued to the whole $\C$-plane. The special values at negative integers satisfy
	\[
		L(-r, C) = -\frac{M^r}{r+1} \sum_{m=1}^M C(m) B_{r+1} \left(\frac{m}{M} \right),
	\] 
	where $B_m(x)$ is the $m$-th Bernoulli polynomial defined by
	\[
		\sum_{m=0}^\infty B_m(x) \frac{t^m}{m!} = \frac{t e^{xt}}{e^t-1}.
	\]
\end{lemma}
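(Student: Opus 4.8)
The plan is to reduce the statement to standard analytic facts about the Hurwitz zeta function $\zeta(s,x) = \sum_{k\ge 0}(k+x)^{-s}$. First I would split the Dirichlet series according to residue classes modulo $M$: since $C$ has period $M$, for $\Re(s)>1$ one has
\[
	L(s,C) = \sum_{a=1}^M C(a) \sum_{\substack{n\ge 1\\ n\equiv a\ (M)}} n^{-s} = M^{-s}\sum_{a=1}^M C(a)\,\zeta\!\left(s,\frac{a}{M}\right).
\]
This already exhibits $L(s,C)$ as holomorphic on $\Re(s)>1$. Each $\zeta(s,a/M)$ continues meromorphically to $\C$ with a single simple pole at $s=1$ of residue $1$, independent of $a/M$; hence the combination $\sum_{a}C(a)\zeta(s,a/M)$ has at worst a simple pole at $s=1$ of residue $\sum_{a=1}^M C(a)=0$, so it — and therefore $L(s,C)$ — is entire. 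This is exactly where the mean-value hypothesis is used, and it gives the first assertion.

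For the special values I would invoke the classical evaluation $\zeta(-r,x) = -B_{r+1}(x)/(r+1)$ for integers $r\ge 0$. Substituting $s=-r$ into the displayed identity yields
\[
	L(-r,C) = M^{r}\sum_{a=1}^M C(a)\,\zeta\!\left(-r,\frac{a}{M}\right) = -\frac{M^{r}}{r+1}\sum_{a=1}^M C(a)\,B_{r+1}\!\left(\frac{a}{M}\right),
\]
which is the claimed formula.

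A self-contained alternative, closer to the generating-function language used elsewhere in the paper, goes through the Mellin transform $\Gamma(s)L(s,C) = \int_0^\infty g(t)\,t^{s-1}\,dt$ for $\Re(s)>1$, where $g(t) = \sum_{m\ge 1}C(m)e^{-mt} = \bigl(\sum_{m=1}^M C(m)e^{-mt}\bigr)/(1-e^{-Mt})$. The hypothesis $\sum_{m=1}^M C(m)=0$ forces the numerator to vanish at $t=0$, so $g$ is holomorphic there; writing $\tfrac{Mt\,e^{-mt}}{1-e^{-Mt}} = \sum_{n\ge 0}B_n(1-m/M)(Mt)^n/n!$ via the Bernoulli generating function and reading off Taylor coefficients of $g$, one finds $L(-r,C) = (-1)^r r!\,[t^r]g(t)$, and then the reflection $B_{r+1}(1-x) = (-1)^{r+1}B_{r+1}(x)$ produces the stated form and, in particular, the overall sign $-M^r/(r+1)$. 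Either way the proof is essentially bookkeeping; the only points demanding care are the cancellation of the pole at $s=1$ (equivalently, the regularity of $g$ at $0$), for which $\sum_{m=1}^M C(m)=0$ is precisely what is needed, and correctly tracking the sign coming from $\zeta(-r,x)=-B_{r+1}(x)/(r+1)$ (respectively from $B_{r+1}(1-x)=(-1)^{r+1}B_{r+1}(x)$). I do not anticipate any genuine obstacle beyond this.
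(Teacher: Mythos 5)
Your proposal is correct, and in fact the paper gives no proof of this lemma at all --- it is simply quoted from Lawrence--Zagier, so there is nothing internal to compare against. Both of your routes are sound and complete. The first (splitting into residue classes, writing $L(s,C)=M^{-s}\sum_{a=1}^M C(a)\,\zeta(s,a/M)$, cancelling the pole at $s=1$ via $\sum_a C(a)=0$, and invoking $\zeta(-r,x)=-B_{r+1}(x)/(r+1)$) is the quickest path if one grants the standard facts about the Hurwitz zeta function. The second, via the Mellin transform of $g(t)=\sum_{m\ge1}C(m)e^{-mt}$ and the Taylor expansion of $Mte^{-mt}/(1-e^{-Mt})$ in Bernoulli polynomials, is essentially the argument in the cited Lawrence--Zagier source, and has the advantage of simultaneously proving the companion asymptotic-expansion lemma that the paper states immediately afterwards (the coefficients $c_r=[t^r]g(t)=L(-r,C)\,(-1)^r/r!$ are exactly the ones appearing in $\sum_m C(m)e^{-m^2t}\sim\sum_m L(-2m,C)(-t)^m/m!$ after replacing $t$ by the appropriate variable). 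Your sign bookkeeping --- both the residue of $\Gamma$ at $s=-r$ and the reflection $B_{r+1}(1-x)=(-1)^{r+1}B_{r+1}(x)$ --- checks out and reproduces the stated $-M^r/(r+1)$ prefactor.
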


\begin{lemma}
	The following asymptotic expansion holds.
	\[
		\sum_{m=1}^\infty C(m) e^{-m^2 t} \sim \sum_{m=0}^\infty L(-2m, C) \frac{(-t)^m}{m!} \quad (t \to 0+).
	\]
\end{lemma}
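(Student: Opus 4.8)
The plan is to recover the expansion from the Mellin transform of $g(t) := \sum_{m=1}^\infty C(m)\, e^{-m^2 t}$. This series converges for every $t>0$, and since $C$ is bounded with mean value $0$ its partial sums $\sum_{m\le X} C(m)$ are bounded, so $g(t)$ stays bounded as $t\to 0+$. First I would establish, for $\Re(s)>\tfrac12$, the identity
\[
	\int_0^\infty g(t)\,t^{s-1}\,dt = \Gamma(s)\,L(2s,C),
\]
by interchanging sum and integral — legitimate because $\sum_m |C(m)|\int_0^\infty e^{-m^2 t}t^{\Re(s)-1}\,dt = \Gamma(\Re(s))\sum_m |C(m)|\,m^{-2\Re(s)}<\infty$ for $\Re(s)>\tfrac12$ — using the elementary evaluation $\int_0^\infty e^{-m^2 t}t^{s-1}\,dt = \Gamma(s)\,m^{-2s}$ and the analytic continuation of $L(s,C)$ from the preceding lemma.

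Next I would invert: for any $c>\tfrac12$,
\[
	g(t) = \frac{1}{2\pi i}\int_{c-i\infty}^{c+i\infty} \Gamma(s)\,L(2s,C)\,t^{-s}\,ds.
\]
To justify this, and all contour shifts below, I need the integrand to be small on vertical lines: Stirling gives $|\Gamma(\sigma+iy)|\ll |y|^{\sigma-1/2}e^{-\pi|y|/2}$, while the mean-zero hypothesis combined with partial summation (or the functional equation of $L(s,C)$ together with Phragm\'{e}n--Lindel\"{o}f) bounds $|L(2s,C)|$ polynomially in $\Im(s)$ on vertical strips, so the product is exponentially small in $\Im(s)$. Now fix an integer $K\ge 0$ and move the contour to $\Re(s)=-K-\tfrac12$. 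Because $C$ has mean value $0$, the preceding lemma shows $L(s,C)$, hence $L(2s,C)$, is entire, so the only singularities crossed are the simple poles of $\Gamma$ at $s=0,-1,\dots,-K$, where $\Res_{s=-n}\Gamma(s)=(-1)^n/n!$. Collecting residues (the horizontal segments vanish in the limit by the exponential decay of $\Gamma$) gives
\[
	g(t) = \sum_{n=0}^{K} L(-2n,C)\,\frac{(-t)^n}{n!} + \frac{1}{2\pi i}\int_{-K-1/2-i\infty}^{-K-1/2+i\infty}\Gamma(s)\,L(2s,C)\,t^{-s}\,ds .
\]
On the shifted line $|t^{-s}|=t^{K+1/2}$ and $\int|\Gamma(s)L(2s,C)|\,|ds|<\infty$, so the remainder is $O(t^{K+1/2})=o(t^K)$ as $t\to 0+$; since $K$ is arbitrary, this is precisely the asserted asymptotic expansion.

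The step I expect to be the main obstacle is the vertical-line growth estimate for $L(2s,C)$ needed for the Mellin inversion and the contour shift; this is where the mean-zero hypothesis does genuine work, through $\sum_{m\le X}C(m)=O(1)$ and partial summation, rather than serving as a mere normalization. As an alternative route avoiding the analytic continuation of $L(s,C)$, one could apply Euler--Maclaurin summation to each residue class $m\equiv j$ modulo a period of $C$: each progression contributes a term $\asymp t^{-1/2}$ with a coefficient independent of $j$, and these cancel after weighting by $C(j)$ precisely because $\sum_j C(j)=0$, while the remaining terms reassemble --- via the Bernoulli-polynomial formula of the preceding lemma --- into the stated power series in $t$.
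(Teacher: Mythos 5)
Your Mellin-transform argument is correct and complete: the identity $\int_0^\infty g(t)t^{s-1}\,dt=\Gamma(s)L(2s,C)$, the contour shift past the simple poles of $\Gamma(s)$ at $s=0,-1,\dots,-K$ (with $L(2s,C)$ entire and polynomially bounded on vertical strips precisely because $C$ has mean value zero), and the $O(t^{K+1/2})$ remainder together give the asserted expansion. The paper offers no proof of this lemma, quoting it from Lawrence--Zagier, and your argument is essentially the standard one given there, so there is nothing to add.
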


\begin{proposition}\label{false-limit}
	For positive integers $M,N > 0$ and $\mu \in \{0, 1, \dots, M-1\}$ with $2\mu \not\equiv 0 \pmod{M}$, we have
	\[
		\lim_{t \to 0+} \widetilde{\theta}_{M, \mu}^{(1)} \left(\frac{1}{N} + it \right) = - \frac{1}{2MN} \sum_{n=1}^{2MN} n \psi_{M, \mu}(n) e^{\pi i \frac{n^2}{MN}},
	\]
	where we put
	\[
		\psi_{M, \mu}(n) = \begin{cases}
			\varepsilon &\text{if } n \equiv \varepsilon \mu \pmod{M},\\
			0 &\text{if otherwise},
		\end{cases}
	\]
	for $\varepsilon \in \{\pm 1\}$.
\end{proposition}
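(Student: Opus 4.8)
The plan is to reduce the radial limit of the false theta function $\widetilde{\theta}_{M,\mu}^{(1)}(1/N + it)$ to the asymptotic machinery of Lawrence--Zagier. First I would write $\widetilde{\theta}_{M,\mu}^{(1)}(\tau) = \sum_{n\geq 1} \psi_{M,\mu}(n) q^{n^2/(2M)}$ using oddness of $\psi_{M,\mu}$ together with $\sgn(0)=0$; note that the hypothesis $2\mu \not\equiv 0 \pmod M$ guarantees $\psi_{M,\mu}$ is well defined (there is no $n$ with $n \equiv \mu$ and $n \equiv -\mu$ simultaneously) and, crucially, that its mean value over a period vanishes, so the previous two Lawrence--Zagier lemmas apply.

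The key step is to factor out the oscillatory behavior at $\tau = 1/N$. With $q = e^{2\pi i\tau}$ and $\tau = 1/N + it$ we have $q^{n^2/(2M)} = e^{\pi i n^2/(MN)} e^{-\pi n^2 t/M}$. The phase $e^{\pi i n^2/(MN)}$ depends only on $n \bmod 2MN$, so I would split the sum over $n$ into residue classes modulo $2MN$, writing $\widetilde{\theta}_{M,\mu}^{(1)}(1/N+it) = \sum_{n=1}^{2MN} \psi_{M,\mu}(n) e^{\pi i n^2/(MN)} \sum_{k\geq 0} \psi_{M,\mu}(n)\big/\psi_{M,\mu}(n)\, e^{-\pi(n+2MNk)^2 t/M}$ — more cleanly, define the period-$2MN$ function $\widetilde{C}(n) = \psi_{M,\mu}(n) e^{\pi i n^2/(MN)}$, whose mean value is again $0$ (the phase is itself $2MN$-periodic and even, while $\psi_{M,\mu}$ retains mean zero after this twist — this needs a short check), and observe
\[
	\widetilde{\theta}_{M,\mu}^{(1)}\Big(\frac{1}{N}+it\Big) = \sum_{n=1}^\infty \widetilde{C}(n)\, e^{-\pi n^2 t/M}.
\]
Then the asymptotic expansion lemma gives the limit as $t\to 0+$ equal to $L(0, \widetilde{C})$, and the special-value lemma with $r=0$ yields $L(0,\widetilde{C}) = -\sum_{n=1}^{2MN} \widetilde{C}(n) B_1(n/(2MN)) = -\sum_{n=1}^{2MN} \widetilde{C}(n)\big(\tfrac{n}{2MN} - \tfrac12\big)$. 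The constant $-\tfrac12$ contributes $\tfrac12 \sum_{n=1}^{2MN}\widetilde{C}(n) = 0$ by the mean-zero property, leaving $-\frac{1}{2MN}\sum_{n=1}^{2MN} n\, \widetilde{C}(n) = -\frac{1}{2MN}\sum_{n=1}^{2MN} n\, \psi_{M,\mu}(n) e^{\pi i n^2/(MN)}$, which is exactly the claimed formula.

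The main obstacle I anticipate is bookkeeping rather than conceptual: verifying that the twisted function $\widetilde{C}(n) = \psi_{M,\mu}(n)e^{\pi i n^2/(MN)}$ has period exactly $2MN$ and mean value zero. For the period one checks $(n+2MN)^2 \equiv n^2 \pmod{2MN}$ after dividing the exponent argument appropriately — i.e. $e^{\pi i(n+2MN)^2/(MN)} = e^{\pi i n^2/(MN)} e^{2\pi i(2n + 2MN)} = e^{\pi i n^2/(MN)}$; and $\psi_{M,\mu}$ has period $M \mid 2MN$. For the mean-zero property one pairs $n \leftrightarrow 2MN - n$: since $\psi_{M,\mu}$ is odd and $2MN \equiv 0 \pmod M$, we get $\psi_{M,\mu}(2MN-n) = -\psi_{M,\mu}(n)$, while $(2MN-n)^2 \equiv n^2$ makes the phase invariant, so the terms cancel in pairs (the fixed points $n = MN$ and $n = 2MN$ satisfy $2\mu \not\equiv 0$, forcing $\psi_{M,\mu}$ to vanish there, which is where the hypothesis is used a second time). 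One should also record the mild analytic point that the asymptotic-expansion lemma's hypotheses are met because $\widetilde{C}$ is periodic with mean zero, so the associated $L$-function is entire and the expansion is genuinely an asymptotic series as $t\to 0+$; taking the constant term is legitimate since we only need the leading ($m=0$) coefficient.
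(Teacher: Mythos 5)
Your proposal is correct and follows essentially the same route as the paper's proof: rewrite $\widetilde{\theta}_{M,\mu}^{(1)}(1/N+it)$ as $\sum_{n\geq 1}C(n)e^{-\pi n^2 t/M}$ with $C(n)=\psi_{M,\mu}(n)e^{\pi i n^2/(MN)}$, invoke the two Lawrence--Zagier lemmas to identify the limit with $L(0,C)=-\sum_{n=1}^{2MN}C(n)B_1(n/(2MN))$, and use $B_1(x)=x-\tfrac12$ together with the mean-zero property. Your explicit verification that $C$ has period $2MN$ and mean zero (via the pairing $n\leftrightarrow 2MN-n$ and the hypothesis $2\mu\not\equiv 0\pmod M$) is detail the paper asserts without proof, and it checks out.
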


\begin{proof}
	By the definition, we have
	\begin{align*}
		\widetilde{\theta}_{M, \mu}^{(1)} \left(\frac{1}{N} + it \right) &= \sum_{n \equiv \mu\ (M)} \sgn(n) e^{\pi i \frac{n^2}{MN}} e^{-n^2 \frac{\pi t}{M}} = \sum_{n = 1}^\infty \psi_{M,\mu}(n) e^{\pi i \frac{n^2}{MN}} e^{-n^2 \frac{\pi t}{M}}.
	\end{align*}
	Since the function $C_{M, \mu, N}(n) =\psi_{M,\mu}(n) e^{\pi i \frac{n^2}{MN}}$ has a period $2MN$ and its mean value equals $0$, we can apply Lawrence--Zagier's lemmas. Then we have
	\begin{align*}
		\lim_{t \to 0+} \widetilde{\theta}_{M, \mu}^{(1)} \left(\frac{1}{N} + it \right) &= L(0, C_{M, \mu, N}) = -\sum_{n=1}^{2MN} C_{M, \mu, N}(n) B_1 \left(\frac{n}{2MN}\right).
	\end{align*}
	The fact that $B_1(x) = x - 1/2$ concludes the proof.
\end{proof}

\begin{corollary}[{\cite[Proposition 3]{Hikami2005IJM}}]\label{Hikami-Prop3}
	\[
		\lim_{t \to 0+} \widetilde{\Phi}_{\bm{p}}^{\bm{\ell}} \left(\frac{1}{N} + it \right) = -\frac{1}{2PN} \sum_{n=1}^{2PN} n \chi_{\bm{p}}^{\bm{\ell}}(n) e^{\pi i \frac{n^2}{2PN}}.
	\]
\end{corollary}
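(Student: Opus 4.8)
The plan is to feed the decomposition of \cref{Eichler-false} into the limit formula of \cref{false-limit} and then carry out some elementary bookkeeping on sums of periodic functions. First I would use \cref{Eichler-false} to write
\[
	\widetilde{\Phi}_{\bm{p}}^{\bm{\ell}}\Bigl(\tfrac{1}{N}+it\Bigr) = -\frac{1}{2}\sum_{\bm{\varepsilon}\in\{\pm1\}^3} \varepsilon_1\varepsilon_2\varepsilon_3\,\widetilde{\theta}_{2P,\mu(\bm{\varepsilon},\bm{\ell})}^{(1)}\Bigl(\tfrac{1}{N}+it\Bigr),
\]
and apply \cref{false-limit} to each of the eight terms with $M=2P$. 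For this one must check the hypothesis $2\mu(\bm{\varepsilon},\bm{\ell})\not\equiv 0 \pmod{2P}$, i.e.\ $\mu(\bm{\varepsilon},\bm{\ell})\not\equiv 0\pmod{P}$: writing $\mu(\bm{\varepsilon},\bm{\ell})\equiv \sum_{j=1}^3 \varepsilon_j\ell_j\prod_{k\ne j}p_k \pmod P$ and reducing modulo $p_j$ kills every term but the $j$-th, leaving $\varepsilon_j\ell_j\prod_{k\ne j}p_k$, which is a unit modulo $p_j$ times $\ell_j\not\equiv0\pmod{p_j}$, since $0<\ell_j<p_j$ and the $p_j$ are pairwise coprime. (The same coprimality shows the eight residues $\mu(\bm{\varepsilon},\bm{\ell})$ are pairwise distinct mod $2P$, which is implicit in \eqref{period-chi} making sense.)

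Granting the hypothesis, \cref{false-limit} turns the above into
\[
	\lim_{t\to0+}\widetilde{\Phi}_{\bm{p}}^{\bm{\ell}}\Bigl(\tfrac{1}{N}+it\Bigr) = \frac{1}{8PN}\sum_{n=1}^{4PN} n\,e^{\pi i\frac{n^2}{2PN}} \sum_{\bm{\varepsilon}\in\{\pm1\}^3}\varepsilon_1\varepsilon_2\varepsilon_3\,\psi_{2P,\mu(\bm{\varepsilon},\bm{\ell})}(n).
\]
The crux is the identity $\sum_{\bm{\varepsilon}}\varepsilon_1\varepsilon_2\varepsilon_3\,\psi_{2P,\mu(\bm{\varepsilon},\bm{\ell})}(n) = -2\,\chi_{\bm{p}}^{\bm{\ell}}(n)$, valid for every $n\in\Z$. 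I would prove it by writing $\psi_{2P,\mu}(n)$ as the difference of the indicator functions of $n\equiv\mu$ and $n\equiv-\mu\pmod{2P}$, using $-\mu(\bm{\varepsilon},\bm{\ell})\equiv\mu(-\bm{\varepsilon},\bm{\ell})\pmod{2P}$ together with the reindexing $\bm{\varepsilon}\mapsto-\bm{\varepsilon}$ (which flips $\varepsilon_1\varepsilon_2\varepsilon_3$) to collapse the two indicator sums into $2\sum_{\bm{\varepsilon}}\varepsilon_1\varepsilon_2\varepsilon_3\,\mathbf{1}[n\equiv\mu(\bm{\varepsilon},\bm{\ell})]$, and then reading off the inner sum as $-\chi_{\bm{p}}^{\bm{\ell}}(n)$ directly from \eqref{period-chi}. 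Substituting this in gives
\[
	\lim_{t\to0+}\widetilde{\Phi}_{\bm{p}}^{\bm{\ell}}\Bigl(\tfrac{1}{N}+it\Bigr) = -\frac{1}{4PN}\sum_{n=1}^{4PN} n\,\chi_{\bm{p}}^{\bm{\ell}}(n)\,e^{\pi i\frac{n^2}{2PN}}.
\]

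It then remains to halve the range of summation. The function $C(n):=\chi_{\bm{p}}^{\bm{\ell}}(n)\,e^{\pi i n^2/(2PN)}$ has period $2PN$ (since $\chi_{\bm{p}}^{\bm{\ell}}$ has period $2P\mid 2PN$ and $e^{\pi i n^2/(2PN)}$ is invariant under $n\mapsto n+2PN$), so substituting $n=m+2PN$ in the block $2PN<n\le 4PN$ shows its contribution equals $\sum_{m=1}^{2PN} mC(m) + 2PN\sum_{m=1}^{2PN}C(m)$. The last sum vanishes: from $\chi_{\bm{p}}^{\bm{\ell}}$ odd and $e^{\pi i(2PN-m)^2/(2PN)}=e^{\pi i m^2/(2PN)}$ one gets $C(2PN-m)=-C(m)$, so pairing $m$ with $2PN-m$ over a full period cancels everything except the fixed point $m=PN$, which obeys $C(PN)=-C(PN)$ and hence is $0$ (and $C(2PN)=C(0)=\chi_{\bm{p}}^{\bm{\ell}}(0)=0$). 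Therefore $\sum_{n=1}^{4PN} nC(n)=2\sum_{n=1}^{2PN}nC(n)$, which yields the asserted formula. The genuinely delicate points---hence the ones I would write out in full---are the combinatorial identity for $\sum_{\bm{\varepsilon}}\varepsilon_1\varepsilon_2\varepsilon_3\,\psi_{2P,\mu(\bm{\varepsilon},\bm{\ell})}$ and this final periodicity/oddness bookkeeping; everything else is substitution. (Alternatively, one can bypass \cref{false-limit} and apply the two Lawrence--Zagier lemmas directly to $\widetilde{\Phi}_{\bm{p}}^{\bm{\ell}}(1/N+it)=\sum_{n\ge1}\chi_{\bm{p}}^{\bm{\ell}}(n)\,e^{\pi i n^2/(2PN)}e^{-\pi t n^2/(2P)}$, after checking that $\chi_{\bm{p}}^{\bm{\ell}}(n)e^{\pi i n^2/(2PN)}$ is $2PN$-periodic with vanishing mean---which is the same oddness computation.)
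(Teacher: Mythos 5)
Your proposal is correct and follows essentially the same route as the paper: decompose via \cref{Eichler-false}, apply \cref{false-limit} with $M=2P$, use the identity $\sum_{\bm{\varepsilon}}\varepsilon_1\varepsilon_2\varepsilon_3\,\psi_{2P,\mu(\bm{\varepsilon},\bm{\ell})}(n)=-2\chi_{\bm{p}}^{\bm{\ell}}(n)$, and halve the summation range by periodicity and vanishing mean (you merely perform the last two steps in the opposite order, which is immaterial). Your explicit verification of the hypothesis $2\mu(\bm{\varepsilon},\bm{\ell})\not\equiv 0\pmod{2P}$ and of the combinatorial identity are details the paper leaves to the reader, and they check out.
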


\begin{proof}
	By \cref{Eichler-false} and \cref{false-limit}, we have
	\[
		\lim_{t \to 0+} \widetilde{\Phi}_{\bm{p}}^{\bm{\ell}} \left(\frac{1}{N} + it \right) = \frac{1}{8PN} \sum_{\bm{\varepsilon} \in \{\pm 1\}^3} \varepsilon_1 \varepsilon_2 \varepsilon_3 \sum_{n=1}^{4PN} n \psi_{2P, \mu(\bm{\varepsilon}, \bm{\ell})}(n) e^{\pi i \frac{n^2}{2PN}}.
	\]
	First, since $\psi_{2P, \mu(\bm{\varepsilon}, \bm{\ell})}(n) e^{\pi i \frac{n^2}{2PN}}$ has a period of $2PN$ and its mean value equals $0$, the inner sum is reduced to
	\begin{align*}
		\sum_{n=1}^{4PN} n \psi_{2P, \mu(\bm{\varepsilon}, \bm{\ell})}(n) e^{\pi i \frac{n^2}{2PN}} &= \sum_{n=1}^{2PN} n \psi_{2P, \mu(\bm{\varepsilon}, \bm{\ell})}(n) e^{\pi i \frac{n^2}{2PN}} + \sum_{n=1}^{2PN} (2PN+n) \psi_{2P, \mu(\bm{\varepsilon}, \bm{\ell})}(n) e^{\pi i \frac{n^2}{2PN}}\\
			&= 2 \sum_{n=1}^{2PN} n \psi_{2P, \mu(\bm{\varepsilon}, \bm{\ell})}(n) e^{\pi i \frac{n^2}{2PN}}.
	\end{align*}
	Second, we can check that
	\[
		\sum_{\bm{\varepsilon} \in \{\pm 1\}^3} \varepsilon_1 \varepsilon_2 \varepsilon_3 \psi_{2P, \mu(\bm{\varepsilon}, \bm{\ell})}(n) = -2 \chi_{\bm{p}}^{\bm{\ell}}(n),
	\]
	which finishes the proof.
\end{proof}

\subsection{False theta decompositions in two dimensional case}\label{s3-2}

In the case of $r=2$, we consider $L = 2\Z^2$ and a bilinear form associated with the matrix
\[
	A = \pmat{2p+1 & 2 \\ 2 & 3}
\]
for a positive integer $p \geq 1$. In other words, $B(\bm{x}, \bm{y}) = \bm{x}^T A \bm{y}$ and $Q(\bm{x}) = (p+1/2) x_1^2 + 2 x_1 x_2 + 3/2 x_2^2$. The dual lattice is given by $L^* = \frac{1}{2} A^{-1} \Z^2$. As a vector $\bm{c} \in \R^2$ satisfying $2Q(\bm{c}) = 1$, we choose here
\[
	\bm{c_1} = \frac{1}{\sqrt{3(6p-1)}} \pmat{3 \\ -2}, \quad \bm{c_2} = \frac{1}{\sqrt{(2p+1)(6p-1)}} \pmat{-2 \\ 2p+1}.
\]
Then for $\bm{\mu} \in L^*$ and $\bm{c} \in \{\bm{c_1}, \bm{c_2}\}$, we consider the false theta function
\begin{align}\label{2-false-def}
	\widetilde{\theta}_{\bm{\mu}, \bm{c}}^{(2)}(\tau) = \sum_{\bm{n} \in L + \bm{\mu}} \sgn(B(\bm{n}, \bm{c})) q^{Q(\bm{n})}
\end{align}
defined in \cref{def-false-theta}. To distinguish it from the one-dimensional case, we intentionally add superscript here. In the next subsection, we reformulate the Hecke-type series shown in \cref{s2-3} using the above false theta functions. Before we do so, we prepare the decomposition formula to compute the limit values of false theta functions.

\begin{lemma}\label{heihoukansei}
	We define $Q_{\bm{c}}(\bm{x}) = Q(\bm{x}) - \frac{1}{2} B(\bm{x}, \bm{c})^2$, $\bm{e}_1 = \smat{1 \\ 0}$, and $\bm{e}_2 = \smat{0 \\ 1}$. For $\bm{x} = (x,y)$, we have
	\begin{align*}
		Q_{\bm{c_1}}(\bm{x}) &= \frac{1}{6} (2x+3y)^2 = \frac{1}{6} B(\bm{e}_2, \bm{x})^2,\\
		Q_{\bm{c_2}}(\bm{x}) &= \frac{1}{2(2p+1)} ((2p+1)x+2y)^2 = \frac{1}{2(2p+1)} B(\bm{e}_1, \bm{x})^2
	\end{align*}
	and
	\begin{align*}
		B(\bm{x}, \bm{c_1}) = \sqrt{\frac{6p-1}{3}} x, \qquad B(\bm{x}, \bm{c_2}) = \sqrt{\frac{6p-1}{2p+1}} y.
	\end{align*}
\end{lemma}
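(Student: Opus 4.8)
The lemma is a direct verification, so the plan is to route everything through the matrix $A = \pmat{2p+1 & 2 \\ 2 & 3}$, exploiting the fact that $\bm{c_1}$ and $\bm{c_2}$ were engineered so that $B(\bm{c_1},\bm{e}_2) = 0$ and $B(\bm{c_2},\bm{e}_1) = 0$; equivalently, $A\bm{c_1}$ lies along $\bm{e}_1$ and $A\bm{c_2}$ lies along $\bm{e}_2$. First I would record the single computation
\[
  A\pmat{3 \\ -2} = \pmat{6p-1 \\ 0}, \qquad A\pmat{-2 \\ 2p+1} = \pmat{0 \\ 6p-1},
\]
which yields $A\bm{c_1} = \sqrt{(6p-1)/3}\;\bm{e}_1$ and $A\bm{c_2} = \sqrt{(6p-1)/(2p+1)}\;\bm{e}_2$, and, as a by-product, $2Q(\bm{c_i}) = \bm{c_i}^T A\bm{c_i} = 1$, confirming the normalization. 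Since $B(\bm{x},\bm{c}) = \bm{x}^T A\bm{c}$, the formulas $B(\bm{x},\bm{c_1}) = \sqrt{(6p-1)/3}\;x$ and $B(\bm{x},\bm{c_2}) = \sqrt{(6p-1)/(2p+1)}\;y$ follow immediately.

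For the two expressions for $Q_{\bm{c}}$, I would write $B(\bm{x},\bm{c})^2 = \bm{x}^T(A\bm{c})(A\bm{c})^T\bm{x}$, so that $Q_{\bm{c}}(\bm{x}) = \tfrac12\,\bm{x}^T\!\big(A - (A\bm{c})(A\bm{c})^T\big)\bm{x}$. The matrix determinant lemma gives $\det\!\big(A - (A\bm{c})(A\bm{c})^T\big) = \det(A)\,(1 - \bm{c}^T A\bm{c}) = 0$, so $Q_{\bm{c}}$ is a nonnegative multiple of the square of a single linear form; since $Q_{\bm{c}}(\bm{c}) = Q(\bm{c}) - \tfrac12 = 0$, that form vanishes on $\bm{c}$, and the $B$-orthogonality noted above identifies it (up to scale) with $B(\bm{e}_2,\cdot)$ for $\bm{c_1}$ and with $B(\bm{e}_1,\cdot)$ for $\bm{c_2}$. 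The scale is pinned down by the rank-one updates themselves: subtracting $(A\bm{c_1})(A\bm{c_1})^T$ alters only the $(1,1)$ entry, leaving $A - (A\bm{c_1})(A\bm{c_1})^T = \pmat{4/3 & 2 \\ 2 & 3} = \tfrac13\,(A\bm{e}_2)(A\bm{e}_2)^T$ with $A\bm{e}_2 = \pmat{2\\3}$, hence $Q_{\bm{c_1}}(\bm{x}) = \tfrac16\,B(\bm{e}_2,\bm{x})^2 = \tfrac16(2x+3y)^2$; symmetrically, $A - (A\bm{c_2})(A\bm{c_2})^T = \pmat{2p+1 & 2 \\ 2 & 4/(2p+1)} = \tfrac{1}{2p+1}(A\bm{e}_1)(A\bm{e}_1)^T$ with $A\bm{e}_1 = \pmat{2p+1\\2}$, giving $Q_{\bm{c_2}}(\bm{x}) = \tfrac{1}{2(2p+1)}B(\bm{e}_1,\bm{x})^2 = \tfrac{1}{2(2p+1)}((2p+1)x+2y)^2$.

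There is no genuine obstacle; the only care needed is the arithmetic identity $6p+3 - (6p-1) = 4$, which is what produces the $4/3$ and $4/(2p+1)$ entries above. The one conceptual point worth stating explicitly — and the organizing principle of the argument — is that each $\bm{c_i}$ is $B$-orthogonal to a coordinate vector ($\bm{e}_2$ for $\bm{c_1}$, $\bm{e}_1$ for $\bm{c_2}$), which is precisely why the degenerate quadratic form $Q_{\bm{c_i}}$ collapses onto the square of that coordinate's dual linear form.
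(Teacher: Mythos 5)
Your proposal is correct: all the computations check out (in particular $A\smat{3\\-2}=\smat{6p-1\\0}$, $A\smat{-2\\2p+1}=\smat{0\\6p-1}$, and the rank-one identities $A-(A\bm{c_1})(A\bm{c_1})^T=\tfrac13(A\bm{e}_2)(A\bm{e}_2)^T$ and $A-(A\bm{c_2})(A\bm{c_2})^T=\tfrac{1}{2p+1}(A\bm{e}_1)(A\bm{e}_1)^T$), and the paper itself proves the lemma only by saying it "follows immediately from a direct calculation," which is exactly what you carry out. Your version is simply a more structured write-up of that same verification, with the $B$-orthogonality of $\bm{c_i}$ to a coordinate vector made explicit as the organizing principle.
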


\begin{proof}
	It follows immediately from a direct calculation.
\end{proof}

\begin{lemma}\label{image-Qc}
	For $\bm{c} \in \{\bm{c_1}, \bm{c_2}\}$ and $\bm{\mu} = \frac{1}{2} A^{-1} \smat{m_1 \\ m_2} \in L^*$ with $\smat{m_1 \\ m_2} \in \Z^2$, we have
	\[
		Q_{\bm{c_1}}(L + \bm{\mu}) = \frac{1}{24} (4\Z + m_2)^2, \qquad Q_{\bm{c_2}}(L+ \bm{\mu}) = \frac{1}{8(2p+1)} (4\Z+m_1)^2.
	\]
\end{lemma}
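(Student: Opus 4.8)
The plan is to feed the identities of \cref{heihoukansei} into a direct lattice computation. By that lemma, on all of $\R^2$ we have $Q_{\bm{c_1}}(\bm{x}) = \frac{1}{6} B(\bm{e}_2,\bm{x})^2$ and $Q_{\bm{c_2}}(\bm{x}) = \frac{1}{2(2p+1)} B(\bm{e}_1,\bm{x})^2$, so it suffices to determine the images of the linear forms $\bm{n} \mapsto B(\bm{e}_2,\bm{n})$ and $\bm{n} \mapsto B(\bm{e}_1,\bm{n})$ on the coset $L+\bm{\mu}$, and then square and rescale by $\frac{1}{6}$ and $\frac{1}{2(2p+1)}$ respectively.

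To carry this out I would write a general point of $L+\bm{\mu}$ as $\bm{n} = 2\bm{k} + \frac{1}{2} A^{-1} \bm{m}$, where $\bm{k} = \smat{k_1 \\ k_2} \in \Z^2$ and $\bm{m} = \smat{m_1 \\ m_2}$. Since $A$ is symmetric,
\[
	B(\bm{e}_i,\bm{n}) = \bm{e}_i^T A \bm{n} = 2\,\bm{e}_i^T A \bm{k} + \tfrac{1}{2}\,\bm{e}_i^T \bm{m} = 2(A\bm{k})_i + \tfrac{1}{2} m_i ,
\]
so everything reduces to the arithmetic of the two integer linear forms $(A\bm{k})_1 = (2p+1)k_1 + 2k_2$ and $(A\bm{k})_2 = 2k_1 + 3k_2$. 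As $\bm{k}$ runs over $\Z^2$, the quantity $2(A\bm{k})_2 = 4k_1 + 6k_2$ runs exactly over $\gcd(4,6)\Z = 2\Z$, and $2(A\bm{k})_1 = 2(2p+1)k_1 + 4k_2$ runs exactly over $\gcd\!\big(2(2p+1),4\big)\Z = 2\Z$, the last equality using that $2p+1$ is odd. Hence $B(\bm{e}_2,\bm{n})$ ranges exactly over $2\Z + \frac{1}{2} m_2 = \frac{1}{2}(4\Z + m_2)$ and $B(\bm{e}_1,\bm{n})$ over $2\Z + \frac{1}{2} m_1 = \frac{1}{2}(4\Z + m_1)$. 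Squaring and applying the prefactors from \cref{heihoukansei} then gives $Q_{\bm{c_1}}(L+\bm{\mu}) = \frac{1}{6}\cdot\frac{1}{4}(4\Z+m_2)^2 = \frac{1}{24}(4\Z+m_2)^2$ and $Q_{\bm{c_2}}(L+\bm{\mu}) = \frac{1}{2(2p+1)}\cdot\frac{1}{4}(4\Z+m_1)^2 = \frac{1}{8(2p+1)}(4\Z+m_1)^2$, as claimed.

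No real difficulty arises here; the only point that needs care is \emph{surjectivity} — that the image is all of $4\Z + m_i$, not merely a subset — and this is precisely what the two $\gcd$ computations (together with the oddness of $2p+1$ in the second one) guarantee. It is also worth noting that the $\bm{\mu}$-part contributes the clean shift $\frac{1}{2} m_i$ with no interference from the lattice variable $\bm{k}$, since $B\!\big(\bm{e}_i,\tfrac{1}{2}A^{-1}\bm{m}\big) = \tfrac{1}{2}\bm{e}_i^T\bm{m} = \tfrac{1}{2} m_i$; this is the same identity that makes $\bm{\mu} = \frac{1}{2}A^{-1}\bm{m}$ lie in $L^*$ in the first place.
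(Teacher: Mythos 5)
Your proof is correct and follows essentially the same route as the paper: both reduce the claim to computing the image $B(\bm{e}_i, L+\bm{\mu}) = 2\Z + \tfrac{1}{2}m_i$ and then apply the identities of \cref{heihoukansei}. The only difference is that you spell out the surjectivity of the integer linear forms via the $\gcd$ computations, which the paper leaves implicit; this is a welcome bit of extra care but not a new idea.
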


\begin{proof}
	\cref{heihoukansei} and the equations
	\begin{align*}
		B(\bm{e}_2, L+\bm{\mu}) &= B(\bm{e}_2, L) + \frac{1}{2} \bm{e}_2^T A A^{-1} \pmat{m_1 \\ m_2} = 2\Z + \frac{m_2}{2},\\
		B(\bm{e}_1, L+\bm{\mu}) &= 2\Z + \frac{m_1}{2}
	\end{align*}
	yield the result.
\end{proof}

With these preparations, the false theta functions $\widetilde{\theta}_{\bm{\mu}, \bm{c}}^{(2)}(\tau)$ can be decomposed into a sum of products of the one-dimensional false theta functions and the ordinary theta functions.

\begin{theorem}\label{false-decompose1}
	For $\bm{\mu} = \frac{1}{2}A^{-1} \smat{m_1 \\ m_2} \in L^*$ with odd integers $m_1, m_2$, we have
	\[
		\widetilde{\theta}_{\bm{\mu}, \bm{c_1}}^{(2)}(\tau) = \sum_{j=0}^2 \theta_{12, m_2+4j}^{(1)}(\tau) \widetilde{\theta}_{12(6p-1), 3m_1-2m_2-4(6p-1)j}^{(1)}(\tau).
	\]
\end{theorem}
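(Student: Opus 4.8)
The plan is to substitute the ``completing the square'' identities of \cref{heihoukansei} into the definition \eqref{2-false-def} and then recognize the resulting double sum as three products of an ordinary theta function with a one-dimensional false theta function. Write $\bm n = (x,y) \in L + \bm\mu$. By \cref{heihoukansei} we have
\[
	Q(\bm n) = \tfrac{1}{6} B(\bm e_2, \bm n)^2 + \tfrac{1}{2} B(\bm n, \bm c_1)^2 = \tfrac{1}{6}(2x+3y)^2 + \tfrac{6p-1}{6} x^2,
\]
and, since $\sqrt{(6p-1)/3} > 0$, the sign is $\sgn(B(\bm n, \bm c_1)) = \sgn(x)$. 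I would then pass to the integer variables $n_1 := 2(2x+3y)$ and $n_2 := 2(6p-1)x$. The assignment $\bm n \mapsto (n_1, n_2)$ is injective (recover $x$ from $n_2$, then $y$ from $n_1$), it turns $q^{Q(\bm n)}$ into $q^{n_1^2/24}\, q^{n_2^2/(24(6p-1))}$, and $\sgn(x) = \sgn(n_2)$. It then remains to determine the image of $L + \bm\mu$ under this map and to split the sum over it.

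The central step is the lattice bookkeeping. Writing $\bm\mu = (\mu_1, \mu_2) = \tfrac{1}{2} A^{-1} \smat{m_1 \\ m_2} = \tfrac{1}{2(6p-1)} \smat{3m_1 - 2m_2 \\ -2m_1 + (2p+1)m_2}$ and $\bm n = \bm\mu + (2a, 2b)$ with $a, b \in \Z$, the identities $2\mu_1 + 3\mu_2 = m_2/2$ and $2(6p-1)\mu_1 = 3m_1 - 2m_2$ give
\[
	n_1 = 4(2a+3b) + m_2, \qquad n_2 = 4(6p-1)a + (3m_1 - 2m_2).
\]
Hence $n_2$ determines $a$, and then $n_1 \equiv m_2 + 8a \pmod{12}$ with $b$ uniquely determined. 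Grouping the sum by $j \in \{0,1,2\}$ according to $a \equiv -j \pmod 3$, one checks that this condition on $a$ is equivalent to $n_2 \equiv 3m_1 - 2m_2 - 4(6p-1)j \pmod{12(6p-1)}$, and that it forces $8a \equiv 4j \pmod{12}$, i.e.\ $n_1 \equiv m_2 + 4j \pmod{12}$. The point I would verify carefully is the converse: for each fixed $j$ the two congruences on $n_1$ and on $n_2$ are \emph{independent} and together describe exactly the image of the $j$-th class. Consequently,
\begin{align*}
	\widetilde{\theta}_{\bm\mu, \bm c_1}^{(2)}(\tau)
	&= \sum_{j=0}^2 \Bigg( \sum_{n_1 \equiv m_2 + 4j\ (12)} q^{\frac{n_1^2}{24}} \Bigg) \Bigg( \sum_{n_2 \equiv 3m_1 - 2m_2 - 4(6p-1)j\ (12(6p-1))} \sgn(n_2)\, q^{\frac{n_2^2}{24(6p-1)}} \Bigg)\\
	&= \sum_{j=0}^2 \theta_{12,\, m_2 + 4j}^{(1)}(\tau)\, \widetilde{\theta}_{12(6p-1),\, 3m_1 - 2m_2 - 4(6p-1)j}^{(1)}(\tau),
\end{align*}
which is the asserted formula.

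I expect this decoupling to be the main obstacle: one must check that $\bm n \mapsto (n_1, n_2)$ identifies $L + \bm\mu$ with an index-$3$ sublattice of $(m_2 + 4\Z) \times ((3m_1 - 2m_2) + 4(6p-1)\Z)$, and that separating the three cosets by $j \equiv -a \pmod 3$ decouples the remaining constraints so that the sum factors. The hypothesis that $m_1$ and $m_2$ are odd enters here in a small but genuine way: it makes $n_2 \equiv 3m_1 - 2m_2 \pmod 2$ odd, hence $n_2 \neq 0$, so that $\sgn(n_2)$ is unambiguous and no term is lost to $\sgn(0) = 0$; I would note this explicitly. The remaining ingredients — the algebra for $\mu_1, \mu_2$, the exponent identity, and the reduction of the sign — are routine and already anticipated by \cref{heihoukansei} and \cref{image-Qc}.
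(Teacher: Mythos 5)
Your proof is correct and follows essentially the same route as the paper: both split $Q$ via \cref{heihoukansei} into $\tfrac12 B(\cdot,\bm{c_1})^2 + Q_{\bm{c_1}}$, reduce to the integer parameters $2a+3b$ and $a$, and classify by a residue modulo $3$ to obtain the three products. The only cosmetic difference is that the paper fixes the value of $Q_{\bm{c_1}}$ first and uses the oddness of $m_2$ to discard the second solution branch of $2B(\bm{e}_2,\bm{n}+\bm{\mu})=\pm(4N+m_2)$, whereas your explicit coordinate map $(a,b)\mapsto(n_1,n_2)$ is injective from the start, so that step is absorbed into the lattice bookkeeping you already carry out.
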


\begin{proof}
	By \cref{image-Qc},
	\begin{align*}
		\widetilde{\theta}_{\bm{\mu}, \bm{c_1}}^{(2)}(\tau) &= \sum_{\bm{n} \in L + \bm{\mu}} \sgn(B(\bm{n}, \bm{c_1})) q^{\frac{1}{2} B(\bm{n}, \bm{c_1})^2} q^{Q_{\bm{c_1}}(\bm{n})}\\
			&= \sum_{N \in \Z} q^{\frac{(4N+m_2)^2}{24}} \sum_{\substack{\bm{n} \in 2\Z^2 \\ Q_{\bm{c_1}}(\bm{n}+ \bm{\mu}) = \frac{(4N+m_2)^2}{24}}} \sgn(B(\bm{n}+\bm{\mu}, \bm{c_1})) q^{\frac{1}{2} B(\bm{n}+ \bm{\mu}, \bm{c_1})^2}.
	\end{align*}
	By \cref{heihoukansei}, the condition $Q_{\bm{c_1}}(\bm{n}+\bm{\mu}) = \frac{(4N+m_2)^2}{24}$ for $\bm{n} = 2\smat{n_1 \\ n_2} \in 2\Z^2$ is reduced to
	\begin{align*}
		2B(\bm{e}_2, \bm{n}+\bm{\mu}) = \pm (4N+m_2),
	\end{align*}
	that is, $2n_1+3n_2 = N$ or $2n_1 + 3n_2 = -N - \frac{m_2}{2}$. However, since $m_2$ is odd, the second possibility is eliminated. Therefore, we have
	\[
		\pmat{n_1 \\ n_2} = n \pmat{3 \\ -2} + N \pmat{-1 \\ 1}
	\]
	for any $n \in \Z$. For this $\bm{n}$, the value of the bilinear form is given by
	\begin{align*}
		B(\bm{n} + \bm{\mu}, \bm{c_1}) &= \sqrt{\frac{6p-1}{3}} \left(6n - 2N + \frac{3m_1 - 2m_2}{2(6p-1)}\right)\\
			&= \frac{12(6p-1) n + (3m_1 - 2m_2 - 4(6p-1)N)}{\sqrt{12(6p-1)}}.
	\end{align*}
	Therefore, we have
	\begin{align*}
		\widetilde{\theta}_{\bm{\mu}, \bm{c_1}}^{(2)} (\tau) &= \sum_{N \in \Z} q^{\frac{(4N+m_2)^2}{24}} \sum_{n \equiv 3m_1 - 2m_2 - 4(6p-1)N \pmod{12(6p-1)}} \sgn(n) q^{\frac{n^2}{24(6p-1)}}.
	\end{align*}
	If we classify $N$ modulo $3$, we conclude the claim.
\end{proof}

\begin{theorem}\label{false-decompose2}
	For $\bm{\mu} = \frac{1}{2}A^{-1} \smat{m_1 \\ m_2} \in L^*$ with odd integers $m_1, m_2$, we have
	\[
		\widetilde{\theta}_{\bm{\mu}, \bm{c_2}}^{(2)}(\tau) = \sum_{j=0}^{2p} \theta_{4(2p+1), m_1+4j}^{(1)}(\tau) \widetilde{\theta}_{4(2p+1)(6p-1), -2m_1+(2p+1)m_2-4p(6p-1)j}^{(1)}(\tau).
	\]
\end{theorem}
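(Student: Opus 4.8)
The plan is to carry out the \emph{false theta decomposition} along the vector $\bm{c_2}$, exactly mirroring the proof of \cref{false-decompose1} (which does the $\bm{c_1}$ case). Writing $Q(\bm{n}) = Q_{\bm{c_2}}(\bm{n}) + \tfrac12 B(\bm{n},\bm{c_2})^2$ as in \cref{heihoukansei} and grouping the lattice sum \eqref{2-false-def} according to the value of $Q_{\bm{c_2}}$, \cref{image-Qc} tells us that $Q_{\bm{c_2}}$ takes the values $\frac{(4N+m_1)^2}{8(2p+1)}$ for $N \in \Z$, so
\[
	\widetilde{\theta}_{\bm{\mu}, \bm{c_2}}^{(2)}(\tau) = \sum_{N \in \Z} q^{\frac{(4N+m_1)^2}{8(2p+1)}} \sum_{\substack{\bm{n} \in 2\Z^2 \\ Q_{\bm{c_2}}(\bm{n}+\bm{\mu}) = \frac{(4N+m_1)^2}{8(2p+1)}}} \sgn(B(\bm{n}+\bm{\mu}, \bm{c_2})) \, q^{\frac{1}{2} B(\bm{n}+\bm{\mu}, \bm{c_2})^2}.
\]

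Next I would identify the inner set of lattice points. Since $Q_{\bm{c_2}}(\bm{x}) = \frac{1}{2(2p+1)} B(\bm{e}_1,\bm{x})^2$ by \cref{heihoukansei}, the constraint becomes $2B(\bm{e}_1,\bm{n}+\bm{\mu}) = \pm(4N+m_1)$; writing $\bm{n} = 2\smat{n_1 \\ n_2}$ and using $B(\bm{e}_1,\bm{\mu}) = m_1/2$ this reads $(2p+1)n_1 + 2n_2 = N$ or $(2p+1)n_1 + 2n_2 = -N - m_1/2$, and the second equation has no integer solution because $m_1$ is odd — this is the exact analogue of the parity argument in \cref{false-decompose1}. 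Because $\gcd(2p+1,2)=1$, the first equation is solved by $\smat{n_1 \\ n_2} = n\smat{2 \\ -(2p+1)} + N\smat{1 \\ -p}$ with $n \in \Z$, where $\smat{2 \\ -(2p+1)}$ spans the kernel of $(n_1,n_2)\mapsto(2p+1)n_1+2n_2$ and $\smat{1\\-p}$ is a particular solution of the inhomogeneous equation with right-hand side $1$. Substituting this into $B(\bm{n}+\bm{\mu},\bm{c_2}) = \sqrt{(6p-1)/(2p+1)}$ times the second coordinate of $\bm{n}+\bm{\mu}$ (again by \cref{heihoukansei}), together with $\bm{\mu} = \frac{1}{2(6p-1)}\smat{3m_1-2m_2 \\ -2m_1+(2p+1)m_2}$, a short computation gives
\[
	B(\bm{n}+\bm{\mu},\bm{c_2}) = \frac{-4(2p+1)(6p-1)n - 4p(6p-1)N - 2m_1 + (2p+1)m_2}{2\sqrt{(2p+1)(6p-1)}},
\]
so that summing over $n\in\Z$ collapses the inner sum into the one-dimensional false theta function $\widetilde{\theta}_{4(2p+1)(6p-1),\, -2m_1+(2p+1)m_2-4p(6p-1)N}^{(1)}(\tau)$.

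Finally I would classify $N$ modulo $2p+1$ (rather than modulo $3$ as in \cref{false-decompose1}, reflecting the $2p+1$ appearing in the image of $Q_{\bm{c_2}}$), writing $N = (2p+1)k+j$ with $0\le j\le 2p$. On one hand, $4N+m_1$ then ranges over the residue class $4j+m_1$ modulo $4(2p+1)$ as $k$ varies, so $\sum_{N\equiv j\ (2p+1)} q^{(4N+m_1)^2/(8(2p+1))} = \theta_{4(2p+1),\, m_1+4j}^{(1)}(\tau)$. On the other hand, $4p(6p-1)(2p+1) = p\cdot 4(2p+1)(6p-1)$ is a multiple of the modulus $4(2p+1)(6p-1)$, so the index of the inner false theta depends on $N$ only through $j$. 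Collecting the $2p+1$ residue classes yields the claimed identity. The one point requiring genuine care is the bookkeeping of the $\bm{\mu}$-contribution through the parametrization and the two modular reductions — modulo $4(2p+1)$ for the theta factor and modulo $4(2p+1)(6p-1)$ for the false theta factor — so that the resulting indices come out exactly as $m_1+4j$ and $-2m_1+(2p+1)m_2-4p(6p-1)j$; the rest is a direct transcription of the proof of \cref{false-decompose1}.
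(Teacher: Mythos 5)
Your proposal is correct and follows essentially the same route as the paper's own proof: the splitting $Q = Q_{\bm{c_2}} + \tfrac12 B(\cdot,\bm{c_2})^2$, the parity argument that kills the branch $2B(\bm{e}_1,\bm{n}+\bm{\mu}) = -(4N+m_1)$, the explicit parametrization of the solution set (yours uses the kernel vector $\smat{2 \\ -(2p+1)}$ instead of the paper's $\smat{-2 \\ 2p+1}$, which only flips the sign of the dummy variable $n$ and yields the same one-dimensional false theta function), and the final classification of $N$ modulo $2p+1$. The index bookkeeping you flag as the delicate point comes out exactly as in the paper, so no changes are needed.
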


\begin{proof}
	The idea of the proof is entirely the same as that of \cref{false-decompose1}. By \cref{image-Qc},
	\begin{align*}
		\widetilde{\theta}_{\bm{\mu}, \bm{c_2}}^{(2)}(\tau) &= \sum_{\bm{n} \in L + \bm{\mu}} \sgn(B(\bm{n}, \bm{c_2})) q^{\frac{1}{2} B(\bm{n}, \bm{c_2})^2} q^{Q_{\bm{c_2}}(\bm{n})}\\
			&= \sum_{N \in \Z} q^{\frac{(4N+m_1)^2}{8(2p+1)}} \sum_{\substack{\bm{n} \in 2\Z^2 \\ Q_{\bm{c_2}}(\bm{n}+ \bm{\mu}) = \frac{(4N+m_1)^2}{8(2p+1)}}} \sgn(B(\bm{n}+\bm{\mu}, \bm{c_2})) q^{\frac{1}{2} B(\bm{n}+ \bm{\mu}, \bm{c_2})^2}.
	\end{align*}
	By \cref{heihoukansei}, the condition $Q_{\bm{c_2}}(\bm{n}+\bm{\mu}) = \frac{(4N+m_1)^2}{8(2p+1)}$ for $\bm{n} = 2\smat{n_1 \\ n_2} \in 2\Z^2$ is reduced to
	\begin{align*}
		2B(\bm{e}_1, \bm{n}+\bm{\mu}) = \pm (4N+m_1),
	\end{align*}
	that is, $(2p+1)n_1+2n_2 = N$ or $(2p+1)n_1 + 2n_2 = -N - \frac{m_1}{2}$. However, since $m_1$ is odd, the second possibility is eliminated. Therefore, we have
	\[
		\pmat{n_1 \\ n_2} = n \pmat{-2 \\ 2p+1} + N \pmat{1 \\ -p}
	\]
	for any $n \in \Z$. For this $\bm{n}$, the value of the bilinear form is given by
	\begin{align*}
		B(\bm{n} + \bm{\mu}, \bm{c_2}) &= \sqrt{\frac{6p-1}{2p+1}} \left(2(2p+1)n - 2p N + \frac{-2m_1 + (2p+1)m_2}{2(6p-1)}\right)\\
			&= \frac{4(2p+1)(6p-1) n + (-2m_1 + (2p+1)m_2 - 4p(6p-1)N)}{\sqrt{4(2p+1)(6p-1)}}.
	\end{align*}
	Therefore, we have
	\begin{align*}
		\widetilde{\theta}_{\bm{\mu}, \bm{c_2}}^{(2)} (\tau) &= \sum_{N \in \Z} q^{\frac{(4N+m_1)^2}{8(2p+1)}} \sum_{n \equiv -2m_1 + (2p+1)m_2 - 4p(6p-1)N \pmod{4(2p+1)(6p-1)}} \sgn(n) q^{\frac{n^2}{8(2p+1)(6p-1)}}.
	\end{align*}
	If we classify $N$ modulo $2p+1$, we conclude the claim.
\end{proof}

\subsection{Limit values}

Under the decomposition formulas given in \cref{false-decompose1} and \cref{false-decompose2}, we compute the limit values of Habiro-type series as $q \to e^{2\pi i/N}$ radially from within the unit disc. First, we transform Hecke-type series into false theta functions. The notations are the same as in \cref{s3-2}.

\begin{lemma}\label{Hecke-false}
	For any pair of integers $(m_1, m_2) \in \Z^2$, we put
	\[
		\bm{\mu} = \pmat{\mu_1 \\ \mu_2} := \frac{1}{2} A^{-1} \pmat{m_1 \\ m_2}.
	\]
	If $0 < \mu_1 < 1$ and $0 < \mu_2 < 1$, we have
	\begin{align*}
		&\frac{1}{(q)_\infty} \left(\sum_{a, b \geq 0} - \sum_{a, b < 0} \right) (-1)^{a+b} q^{\left(p+\frac{1}{2}\right) a^2 + 2ab + \frac{3}{2} b^2 + \frac{m_1}{2} a + \frac{m_2}{2} b}\\
			&= \frac{q^{\frac{1}{24} - Q(\bm{\mu})}}{2\eta(\tau)} \sum_{\bm{\varepsilon} = \smat{\varepsilon_1 \\ \varepsilon_2} \in \{0,1\}^2} (-1)^{\varepsilon_1 + \varepsilon_2} \bigg(\widetilde{\theta}_{\bm{\mu}+\bm{\varepsilon}, \bm{c_1}}^{(2)}(\tau) + \widetilde{\theta}_{\bm{\mu}+\bm{\varepsilon}, \bm{c_2}}^{(2)}(\tau) \bigg),
	\end{align*}
	where $\eta(\tau) = q^{1/24} (q)_\infty$ is the Dedekind eta function.
\end{lemma}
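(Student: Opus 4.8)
The plan is to complete the square in the quadratic exponent and then recognize the resulting lattice sum, decomposed over the cosets of $L=2\Z^2$, as exactly half the signed combination of two-dimensional false theta functions on the right. First I would observe that by the defining relation $A\bm{\mu}=\tfrac12\smat{m_1\\m_2}$ the linear part $\tfrac{m_1}{2}a+\tfrac{m_2}{2}b$ of the exponent is precisely $B(\bm{\mu},\bm{x})$ for $\bm{x}=\smat{a\\b}$, while the quadratic part is $Q(\bm{x})$; hence by polarization, $Q(\bm{x}+\bm{\mu})=Q(\bm{x})+B(\bm{x},\bm{\mu})+Q(\bm{\mu})$, the entire exponent equals $Q(\bm{x}+\bm{\mu})-Q(\bm{\mu})$. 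Rewriting $1/(q)_\infty=q^{1/24}/\eta(\tau)$ and pulling out the constant $q^{-Q(\bm{\mu})}$, the left-hand side becomes $\tfrac{q^{1/24-Q(\bm{\mu})}}{\eta(\tau)}\bigl(\sum_{a,b\ge0}-\sum_{a,b<0}\bigr)(-1)^{a+b}q^{Q(\bm{x}+\bm{\mu})}$. Since $A$ has positive trace and determinant $6p-1>0$, $Q$ is positive definite, so every series in sight converges absolutely on $|q|<1$ and all the rearrangements below are harmless.

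Next I would unfold the right-hand side. In the definition \eqref{2-false-def}, a point of $L+\bm{\mu}+\bm{\varepsilon}=2\Z^2+\bm{\mu}+\bm{\varepsilon}$ can be written $\bm{n}=\bm{m}+\bm{\mu}$ with $\bm{m}=2\bm{k}+\bm{\varepsilon}\in\Z^2$, and as $(\bm{\varepsilon},\bm{k})$ runs over $\{0,1\}^2\times\Z^2$ the vector $\bm{m}$ runs bijectively over $\Z^2$, with $(-1)^{\varepsilon_1+\varepsilon_2}=(-1)^{m_1+m_2}$ because $m_i\equiv\varepsilon_i\pmod 2$ (I would rename the running indices to avoid the clash with the fixed $m_i$ of the statement). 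Thus $\sum_{\bm{\varepsilon}}(-1)^{\varepsilon_1+\varepsilon_2}\widetilde{\theta}_{\bm{\mu}+\bm{\varepsilon},\bm{c}}^{(2)}(\tau)=\sum_{\bm{m}\in\Z^2}(-1)^{m_1+m_2}\sgn\bigl(B(\bm{m}+\bm{\mu},\bm{c})\bigr)q^{Q(\bm{m}+\bm{\mu})}$ for each $\bm{c}\in\{\bm{c_1},\bm{c_2}\}$. Now I would invoke \cref{heihoukansei}: $B(\cdot,\bm{c_1})$ is a positive multiple of the first coordinate and $B(\cdot,\bm{c_2})$ a positive multiple of the second, so using $0<\mu_1<1$ and $0<\mu_2<1$ the sign $\sgn(B(\bm{m}+\bm{\mu},\bm{c_1}))$ is $+1$ exactly when $m_1\ge0$ and $-1$ otherwise, and similarly $\sgn(B(\bm{m}+\bm{\mu},\bm{c_2}))$ tracks the sign of $m_2$. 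The elementary identity $\tfrac12\bigl(\sgn(B(\bm{m}+\bm{\mu},\bm{c_1}))+\sgn(B(\bm{m}+\bm{\mu},\bm{c_2}))\bigr)$ then equals $+1$ on $\{m_1\ge0,\,m_2\ge0\}$, $-1$ on $\{m_1<0,\,m_2<0\}$, and $0$ on the two mixed chambers --- which is exactly the coefficient turning $\sum_{\bm{m}\in\Z^2}$ into $\sum_{a,b\ge0}-\sum_{a,b<0}$.

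Putting these together, summing the $\bm{c}=\bm{c_1}$ and $\bm{c}=\bm{c_2}$ versions and halving yields $\tfrac12\sum_{\bm{\varepsilon}}(-1)^{\varepsilon_1+\varepsilon_2}\bigl(\widetilde{\theta}_{\bm{\mu}+\bm{\varepsilon},\bm{c_1}}^{(2)}+\widetilde{\theta}_{\bm{\mu}+\bm{\varepsilon},\bm{c_2}}^{(2)}\bigr)=\bigl(\sum_{a,b\ge0}-\sum_{a,b<0}\bigr)(-1)^{a+b}q^{Q(\bm{x}+\bm{\mu})}$, and multiplying by $q^{1/24-Q(\bm{\mu})}/\eta(\tau)$ reproduces the left-hand side of the lemma computed in the first paragraph. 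I do not expect a genuine obstacle here: the content is entirely in completing the square, and the remainder is bookkeeping. The two points that warrant a little care are the behaviour of $\sgn$ at the chamber walls --- checking that a vanishing coordinate $m_i=0$ falls on the ``$\ge0$'' side precisely because $\mu_i$ lies strictly inside $(0,1)$ --- and keeping the running lattice index notationally distinct from the fixed data $m_1,m_2$ in the statement.
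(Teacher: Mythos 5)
Your proposal is correct and follows essentially the same route as the paper's proof: unfolding the $\bm{\varepsilon}$-sum over the cosets of $2\Z^2$ into a single sum over $\Z^2$ with sign $(-1)^{m_1+m_2}$, using \cref{heihoukansei} together with $0<\mu_i<1$ to identify the sign factors with $\sgn_0$ of the coordinates, and applying the polarization identity $Q(\bm{x}+\bm{\mu})=Q(\bm{x})+B(\bm{x},\bm{\mu})+Q(\bm{\mu})$. The care you take at the chamber walls ($m_i=0$) and with the index-naming clash is exactly the right bookkeeping.
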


\begin{proof}
	By the definition,
	\begin{align*}
		&\sum_{\bm{\varepsilon} \in \{0,1\}^2} (-1)^{\varepsilon_1 + \varepsilon_2} \bigg(\widetilde{\theta}_{\bm{\mu}+\bm{\varepsilon}, \bm{c_1}}^{(2)}(\tau) + \widetilde{\theta}_{\bm{\mu}+\bm{\varepsilon}, \bm{c_2}}^{(2)}(\tau) \bigg)\\
			&= \sum_{\bm{\varepsilon} \in \{0,1\}^2} (-1)^{\varepsilon_1 + \varepsilon_2} \sum_{\bm{n} \in 2\Z^2} (\sgn(B(\bm{n}+ \bm{\mu} + \bm{\varepsilon}, \bm{c_1})) + \sgn(B(\bm{n}+ \bm{\mu} + \bm{\varepsilon}, \bm{c_2}))) q^{Q(\bm{n}+ \bm{\mu} + \bm{\varepsilon})}.
	\end{align*}
	Since $(-1)^{\varepsilon_1 + \varepsilon_2} = (-1)^{(n_1+\varepsilon_1) + (n_2+\varepsilon_2)}$ for any $\bm{n} \in 2\Z^2$, the above is equal to
	\begin{align*}
		\sum_{\bm{n} \in \Z^2} (-1)^{n_1+n_2} (\sgn(B(\bm{n}+ \bm{\mu}, \bm{c_1})) + \sgn(B(\bm{n}+ \bm{\mu}, \bm{c_2}))) q^{Q(\bm{n}+ \bm{\mu})}.
	\end{align*}
	By \cref{heihoukansei} and our assumption $0 < \mu_1< 1$, we have
	\begin{align*}
		\sgn(B(\bm{n}+ \bm{\mu}, \bm{c_1})) = \sgn(n_1+\mu_1) = \sgn_0(n_1),
	\end{align*}
	where $\sgn_0$ is the sign function with $\sgn_0(0) = 1$. Similarly $\sgn(B(\bm{n}+ \bm{\mu}, \bm{c_2})) = \sgn_0(n_2)$ holds. Thus we obtain
	\begin{align*}
		\sum_{\bm{\varepsilon} \in \{0,1\}^2} (-1)^{\varepsilon_1 + \varepsilon_2} \bigg(\widetilde{\theta}_{\bm{\mu}+\bm{\varepsilon}, \bm{c_1}}^{(2)}(\tau) + \widetilde{\theta}_{\bm{\mu}+\bm{\varepsilon}, \bm{c_2}}^{(2)}(\tau) \bigg) = 2 \left(\sum_{a, b \geq 0} - \sum_{a, b < 0} \right) (-1)^{a+b} q^{Q(a+\mu_1, b+\mu_2)}.
	\end{align*}
	Using the relation $Q(\bm{x}+\bm{\mu}) = Q(\bm{x}) + Q(\bm{\mu}) + B(\bm{x}, \bm{\mu})$ concludes the proof.
\end{proof}

\begin{theorem}\label{Habiro-false}
	Let
	\begin{align*}
		\bm{\mu_1} &= \frac{1}{2} A^{-1} \pmat{2p+1 \\ 5} = \frac{1}{2(6p-1)} \pmat{6p-7 \\ 6p+3},\\
		\bm{\mu_2} &= \frac{1}{2} A^{-1} \pmat{1 \\ 1} = \frac{1}{2(6p-1)} \pmat{1 \\ 2p-1},\\
		\bm{\mu_3} &= \frac{1}{2} A^{-1} \pmat{2p+3 \\ 3} = \frac{1}{2(6p-1)} \pmat{6p+3 \\ 2p-3},\\
		\bm{\mu_4} &= \frac{1}{2} A^{-1} \pmat{2p+1 \\ 1} = \frac{1}{2(6p-1)} \pmat{6p+1 \\ -2p-1},\\
		\bm{\mu_5} &= \frac{1}{2} A^{-1} \pmat{1 \\ 3} = \frac{1}{2(6p-1)} \pmat{-3 \\ 6p+1},
	\end{align*}
	and
	\begin{align*}
		e_1 = (6p+5)^2, \quad e_2 = 1, \quad e_3 = 36p^2 + 84p+1, \quad e_4 = (6p+1)^2, \quad e_5 = 48p+1.
	\end{align*}
	The five families of the Habiro-type series defined in \cref{Habiro-elements} have the following expressions in terms of false theta functions except for the cases of $p=1$ with $k=1, 3$.
	\begin{align*}
		H_p^{(k)}(q) &= \frac{q^{-\frac{e_k}{24(6p-1)}}}{2\eta(\tau)} \sum_{\bm{\varepsilon} \in \{0,1\}^2} (-1)^{\varepsilon_1 + \varepsilon_2} \bigg(\widetilde{\theta}_{\bm{\mu_k} + \bm{\varepsilon}, \bm{c_1}}^{(2)} (\tau) + \widetilde{\theta}_{\bm{\mu_k} + \bm{\varepsilon}, \bm{c_2}}^{(2)} (\tau) \bigg).
	\end{align*}
\end{theorem}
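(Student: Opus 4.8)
The plan is to feed each of the five Hecke‑type expansions established in \cref{s2-3} into the dictionary provided by \cref{Hecke-false}. The first step is pure parameter matching: for each $k$ the quadratic exponent occurring in the Hecke‑type expansion of $H_p^{(k)}(q)$ — given by \cref{Hikami-Theorem3.5} for $k=1$, by \cref{Habiro-2}, \cref{Habiro-3}, \cref{Habiro-4} for $k=2,3,4$, and by \cref{Habiro-5} for $k=5$ — is exactly of the shape $(p+\tfrac12)a^2+2ab+\tfrac32 b^2+\tfrac{m_1}{2}a+\tfrac{m_2}{2}b$, with $(m_1,m_2)$ equal respectively to $(2p+1,5)$, $(1,1)$, $(2p+3,3)$, $(2p+1,1)$, $(1,3)$. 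Setting $\bm\mu_k=\tfrac12A^{-1}\smat{m_1\\m_2}$ and using $A^{-1}=\tfrac{1}{6p-1}\smat{3&-2\\-2&2p+1}$, a one‑line computation identifies $\bm\mu_k$ with the vector written in the statement. Along the way I would record the values $Q(\bm\mu_2)=\tfrac{p}{4(6p-1)}$, $Q(\bm\mu_1)=\tfrac{(2p+1)(3p+4)}{4(6p-1)}$, $Q(\bm\mu_3)=\tfrac{3p(2p+5)}{4(6p-1)}$, $Q(\bm\mu_4)=\tfrac{3p(2p+1)}{4(6p-1)}$, $Q(\bm\mu_5)=\tfrac{9p}{4(6p-1)}$, and check that $\tfrac1{24}-Q(\bm\mu_k)=-\tfrac{e_k}{24(6p-1)}$ in each case, which is precisely the exponent appearing in the theorem; this is mechanical but a little tedious.

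For $k=2$, and for $k=1$ and $k=3$ when $p\ge2$, the coordinates of $\bm\mu_k$ satisfy $0<\mu_1<1$ and $0<\mu_2<1$: the required inequalities are $6p-7>0$, $6p+3<2(6p-1)$, $2p-3>0$, $2p-1>0$, and so on, and it is exactly the failure of $6p-7>0$ (for $k=1$) and of $2p-3>0$ (for $k=3$) at $p=1$ that accounts for the two cases excluded from the statement. In all of these cases \cref{Hecke-false} applies word for word, and combining it with $\eta(\tau)=q^{1/24}(q)_\infty$ and the identity $\tfrac1{24}-Q(\bm\mu_k)=-\tfrac{e_k}{24(6p-1)}$ from the first step produces the asserted formula.

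The cases $k=4$ and $k=5$ need a short extra argument, because there exactly one coordinate of $\bm\mu_k$ always lies in $(-1,0)$ rather than $(0,1)$ — namely $\mu_{4,2}=\tfrac{-2p-1}{2(6p-1)}$ and $\mu_{5,1}=\tfrac{-3}{2(6p-1)}$ — and, since $L=2\Z^2$, no choice of representative moves it into $(0,1)$, so \cref{Hecke-false} does not apply directly. I would rerun the proof of \cref{Hecke-false} for this configuration: by \cref{heihoukansei} the only effect of having $\mu_i\in(-1,0)$ is that $\sgn(B(\bm n+\bm\mu_k,\bm c))=\sgn(n_i+\mu_i)$ now equals $+1$ for $n_i\ge1$ and $-1$ for $n_i\le0$, so the double sum that emerges runs over $n_i\ge1$ (and $n_i\le0$) rather than $n_i\ge0$ (and $n_i\le-1$). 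Comparing this with the Hecke‑type double sum of $(q)_\infty H_p^{(k)}(q)$, the two differ by a single one‑dimensional series living on the slice $n_i=0$: for $k=4$ it is $-\sum_{a\in\Z}(-1)^a q^{(p+\frac12)a^2+\frac{2p+1}{2}a}=-\sum_{a\in\Z}(-1)^a q^{(p+\frac12)a(a+1)}$, and for $k=5$ it is $-\sum_{b\in\Z}(-1)^b q^{\frac32 b^2+\frac32 b}=-\sum_{b\in\Z}(-1)^b q^{\frac32 b(b+1)}$. Each of these vanishes under the involution $n\mapsto-n-1$, which fixes $n(n+1)$ while negating $(-1)^n$; hence the correction term drops out, the two double sums coincide, and the claimed identity follows with the very same $\bm\mu_4,\bm\mu_5$, the prefactor being handled exactly as before.

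I expect the main obstacle to be this last step: recognising that $\bm\mu_4$ and $\bm\mu_5$, despite having a slightly negative coordinate, still generate the \emph{same} Hecke‑type double sum as a genuine element of $(0,1)^2$ would, precisely because the boundary contribution is a theta series antisymmetric under $n\mapsto-n-1$ and therefore zero. Everything else — pinning down $(m_1,m_2)$, evaluating $Q(\bm\mu_k)$, and matching it against $e_k$ — is coefficient chasing.
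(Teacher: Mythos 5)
Your proposal is correct and follows essentially the same route as the paper: match the Hecke-type exponents from \cref{Hikami-Theorem3.5}--\cref{Habiro-5} to $(m_1,m_2)$, verify $\tfrac1{24}-Q(\bm{\mu_k})=-\tfrac{e_k}{24(6p-1)}$, apply \cref{Hecke-false} when $0<\mu_1,\mu_2<1$ (which fails exactly at $p=1$ for $k=1,3$), and for $k=4,5$ account for the coordinate in $(-1,0)$ by the vanishing boundary slice $\sum_{n\in\Z}(-1)^n q^{\lambda n(n+1)}=0$ under $n\mapsto-n-1$. The paper phrases this last step as adding the same trivial term to shift the summation region to $\{a\ge0,\,b>0\}-\{a<0,\,b\le0\}$, which is your argument read in the opposite direction.
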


\begin{proof}
	The cases of $k=1,2,3$ immediately follow from \cref{Hikami-Theorem3.5}, \cref{Habiro-2}, \cref{Habiro-3}, and \cref{Hecke-false}. As for the cases of $k=4,5$, \cref{Hecke-false} can not be applied directly because $\bm{\mu_k}$ does not satisfy the required conditions.
	
	First, we consider the case $k=4$. The difference in proofs for this case comes from
	\[
		\sgn(B(\bm{n} + \bm{\mu_4}, \bm{c_2})) = \sgn(n_2 + \mu_{4,2}) = \begin{cases}
			+1 &\text{if } n_2 > 0,\\
			-1 &\text{if } n_2 \leq 0,
		\end{cases}
	\] 
	which is not equal to $\sgn_0(n_2)$. We recall that, by \cref{Habiro-4},
	\[
		H_p^{(4)}(q) = \frac{1}{(q)_\infty} \left(\sum_{a, b \geq 0} - \sum_{a, b < 0} \right) (-1)^{a+b} q^{\left(p+\frac{1}{2}\right)a^2 + 2ab + \frac{3}{2} b^2 + \frac{2p+1}{2} a + \frac{1}{2} b}.
	\]
	To modify the proof of \cref{Hecke-false}, we add the trivial term
	\[
		-\frac{1}{(q)_\infty} \sum_{a \in \Z} (-1)^a q^{\left(p+\frac{1}{2}\right)a^2 + \left(p+\frac{1}{2}\right)a} = 0
	\]
	to the right-hand side. Then, we get
	\[
		H_p^{(4)}(q) = \frac{1}{(q)_\infty} \left(\sum_{\substack{a \geq 0\\ b>0}} - \sum_{\substack{a < 0 \\ b \leq 0}} \right) (-1)^{a+b} q^{\left(p+\frac{1}{2}\right)a^2 + 2ab + \frac{3}{2} b^2 + \frac{2p+1}{2} a + \frac{1}{2} b}.
	\]
	The rest is similar. As for the case of $k=5$, we also obtain the desired formula by adding the trivial term
	\[
		-\frac{1}{(q)_\infty} \sum_{b \in \Z} (-1)^b q^{\frac{3}{2} b^2 + \frac{3}{2}b} = 0
	\]
	to the expression of $H_p^{(5)}(q)$ in \cref{Habiro-5}.
\end{proof}

We note that for the two excluded cases, the equality no longer holds either. However, the difference seems to be just $1/q$.

Finally, we compute the limit values. For simplicity, we put
\begin{align*}
	\widetilde{\Theta}_{M, \mu}^{(1)} (\tau) &= \widetilde{\theta}_{M, \mu}^{(1)}(\tau) - \widetilde{\theta}_{M, \mu+M/2}^{(1)} (\tau),\\
	\Theta_{M, \mu}^{(1)} (\tau) &= \theta_{M, \mu}^{(1)}(\tau) - \theta_{M, \mu+M/2}^{(1)} (\tau)
\end{align*}
and
\begin{align*}
	\widetilde{\Theta}_{\bm{\mu}, \bm{c}}^{(2)}(\tau) = \sum_{\bm{\varepsilon} \in \{0,1\}^2} (-1)^{\varepsilon_1 + \varepsilon_2} \widetilde{\theta}_{\bm{\mu} + \bm{\varepsilon}, \bm{c}}^{(2)}(\tau)
\end{align*}
for a positive even integer $M > 0$. By the false theta decompositions given in \cref{false-decompose1} and \cref{false-decompose2}, we have the following.

\begin{lemma}\label{THETA-DECOMP}
	The notations are the same as in \cref{false-decompose1} and \cref{false-decompose2}. Then we have
	\begin{align*}
		\widetilde{\Theta}_{\bm{\mu}, \bm{c_1}}^{(2)}(\tau) &= \sum_{j=0}^2 \Theta_{12, m_2+4j}^{(1)}(\tau) \widetilde{\Theta}_{12(6p-1), 3m_1-2m_2-4(6p-1)j}^{(1)}(\tau),\\
		\widetilde{\Theta}_{\bm{\mu}, \bm{c_2}}^{(2)}(\tau) &= \sum_{j=0}^{2p} \Theta_{4(2p+1), m_1+4j}^{(1)}(\tau) \widetilde{\Theta}_{4(2p+1)(6p-1), -2m_1+(2p+1)m_2-4p(6p-1)j}^{(1)}(\tau).
	\end{align*}
\end{lemma}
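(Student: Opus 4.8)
\emph{Proof strategy.} The plan is to obtain both identities by applying the operator $\widetilde\theta^{(2)}_{\bm\mu,\bm c}\mapsto\sum_{\bm\varepsilon\in\{0,1\}^2}(-1)^{\varepsilon_1+\varepsilon_2}\widetilde\theta^{(2)}_{\bm\mu+\bm\varepsilon,\bm c}$ to the decomposition formulas of \cref{false-decompose1} and \cref{false-decompose2} term by term and then collapsing the resulting double sum over $\bm\varepsilon$. First I would record that $\bm\mu+\bm\varepsilon=\tfrac12 A^{-1}\smat{m_1^{\bm\varepsilon}\\m_2^{\bm\varepsilon}}$ with $\smat{m_1^{\bm\varepsilon}\\m_2^{\bm\varepsilon}}=\smat{m_1\\m_2}+2A\bm\varepsilon$, i.e.\ $m_1^{\bm\varepsilon}=m_1+2(2p+1)\varepsilon_1+4\varepsilon_2$ and $m_2^{\bm\varepsilon}=m_2+4\varepsilon_1+6\varepsilon_2$, which are again odd, so \cref{false-decompose1} and \cref{false-decompose2} apply to each summand. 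A short computation then gives $3m_1^{\bm\varepsilon}-2m_2^{\bm\varepsilon}=3m_1-2m_2+2(6p-1)\varepsilon_1$ and $-2m_1^{\bm\varepsilon}+(2p+1)m_2^{\bm\varepsilon}=-2m_1+(2p+1)m_2+2(6p-1)\varepsilon_2$; in particular, for $\bm c_1$ the variable $\varepsilon_2$ occurs only in the modulus-$12$ theta factor, and for $\bm c_2$ the variable $\varepsilon_1$ occurs only in the modulus-$4(2p+1)$ theta factor.

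For the $\bm c_1$-part, $\varepsilon_2$ enters the theta factor through the shift $6\varepsilon_2=\tfrac12\cdot 12\cdot\varepsilon_2$, so summing over $\varepsilon_2\in\{0,1\}$ against $(-1)^{\varepsilon_2}$ replaces $\theta^{(1)}_{12,\,\cdot}$ by $\Theta^{(1)}_{12,\,\cdot}$ directly from the definition of $\Theta^{(1)}$. What remains is $\sum_{\varepsilon_1\in\{0,1\}}(-1)^{\varepsilon_1}\sum_{j=0}^{2}\Theta^{(1)}_{12,\,m_2+4j+4\varepsilon_1}(\tau)\,\widetilde\theta^{(1)}_{12(6p-1),\,3m_1-2m_2-4(6p-1)j+2(6p-1)\varepsilon_1}(\tau)$. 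In the $\varepsilon_1=1$ summand I re-index $j\mapsto j-1$: the theta index becomes $m_2+4j$ and the false-theta index becomes $3m_1-2m_2-4(6p-1)j+6(6p-1)$, a shift by the half-period $6(6p-1)=\tfrac12\cdot 12(6p-1)$. Using only the plain periodicities of $\theta^{(1)}_{12,\,\cdot}$ and $\widetilde\theta^{(1)}_{12(6p-1),\,\cdot}$ in their lower indices, the new range $j\in\{1,2,3\}$ folds back to $\{0,1,2\}$ (the $j=3$ term equals the $j=0$ term), and the $\varepsilon_1=0$ and $\varepsilon_1=1$ contributions combine, via $\widetilde\Theta^{(1)}_{M,\mu}=\widetilde\theta^{(1)}_{M,\mu}-\widetilde\theta^{(1)}_{M,\mu+M/2}$, into $\sum_{j=0}^{2}\Theta^{(1)}_{12,\,m_2+4j}\widetilde\Theta^{(1)}_{12(6p-1),\,3m_1-2m_2-4(6p-1)j}$, as asserted.

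The $\bm c_2$-part is completely parallel with the roles of $\varepsilon_1$ and $\varepsilon_2$ interchanged: $\varepsilon_1$ enters only the theta factor through $2(2p+1)\varepsilon_1=\tfrac12\cdot 4(2p+1)\cdot\varepsilon_1$, so the $\varepsilon_1$-sum turns $\theta^{(1)}_{4(2p+1),\,\cdot}$ into $\Theta^{(1)}_{4(2p+1),\,\cdot}$; then in the remaining $\varepsilon_2$-sum I re-index $j\mapsto j-1$ over $\{0,\dots,2p\}$ and use the arithmetic identities $4p(6p-1)+2(6p-1)=2(2p+1)(6p-1)=\tfrac12\cdot 4(2p+1)(6p-1)$ (so that the $\varepsilon_2$-shift equals the half-period) and $4p(6p-1)(2p+1)=p\cdot 4(2p+1)(6p-1)$ (so that the out-of-range $j=2p+1$ term folds back to $j=0$). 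Combining as before yields $\sum_{j=0}^{2p}\Theta^{(1)}_{4(2p+1),\,m_1+4j}\widetilde\Theta^{(1)}_{4(2p+1)(6p-1),\,-2m_1+(2p+1)m_2-4p(6p-1)j}$.

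No step here is conceptually deep; the one point that needs care is the bookkeeping in the final re-indexing — checking that the single extra boundary term ($j=3$ for $\bm c_1$, $j=2p+1$ for $\bm c_2$) is matched with the missing one ($j=0$) using exactly the right period, and that the signs coming from $(-1)^{\varepsilon_i}$ together with $\Theta^{(1)}_{M,\mu+M/2}=-\Theta^{(1)}_{M,\mu}$ and $\widetilde\Theta^{(1)}_{M,\mu+M/2}=-\widetilde\Theta^{(1)}_{M,\mu}$ are tracked correctly. A cleaner-looking but longer alternative, which I would keep in reserve, is to first note (exactly as in the proof of \cref{Hecke-false}) that $\widetilde\Theta^{(2)}_{\bm\mu,\bm c}(\tau)=\sum_{\bm n\in\Z^2}(-1)^{n_1+n_2}\sgn(B(\bm n+\bm\mu,\bm c))\,q^{Q(\bm n+\bm\mu)}$ and then re-run the lattice-parametrization argument of \cref{false-decompose1}/\cref{false-decompose2} with the extra sign $(-1)^{n_1+n_2}$, which now folds the inner $\widetilde\theta^{(1)}$ sum into a $\widetilde\Theta^{(1)}$ at the cost of the same regrouping at the end.
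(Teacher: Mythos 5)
Your proposal is correct and follows essentially the same route as the paper: apply \cref{false-decompose1} and \cref{false-decompose2} to each shifted $\widetilde{\theta}^{(2)}_{\bm{\mu}+\bm{\varepsilon},\bm{c}}$ using $\bm{\mu}+\bm{\varepsilon}=\tfrac12 A^{-1}(\smat{m_1\\m_2}+2A\bm{\varepsilon})$, observe that one of $\varepsilon_1,\varepsilon_2$ produces exactly the half-period shift in the one-dimensional theta while the other produces a $j\mapsto j+1$ re-indexing combined with the half-period shift in the false theta, and fold the boundary term back using periodicity. Your bookkeeping (the identities $2(6p-1)+4p(6p-1)=\tfrac12\cdot4(2p+1)(6p-1)$ and $4p(6p-1)(2p+1)\equiv 0$) matches the computation recorded in the paper's table, so there is nothing to add.
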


\begin{proof}
	By \cref{false-decompose1},
	\begin{align*}
		\widetilde{\Theta}_{\bm{\mu}, \bm{c_1}}^{(2)}(\tau) &= \sum_{\bm{\varepsilon} \in \{0,1\}^2} (-1)^{\varepsilon_1 + \varepsilon_2}\\
			&\quad \times \sum_{j=0}^2 \theta_{12, m_2+4\varepsilon_1+6\varepsilon_2+4j}^{(1)}(\tau) \widetilde{\theta}_{12(6p-1), 3m_1 - 2m_2 + 2(6p-1)\varepsilon_1 -4(6p-1)j}^{(1)}(\tau).
	\end{align*}
	Here we use the fact that
	\[
		\pmat{1 \\ 0} = \frac{1}{2} A^{-1} \pmat{4p+2 \\ 4}, \quad \pmat{0 \\ 1} = \frac{1}{2} A^{-1} \pmat{4 \\ 6}.
	\]
	The calculation
	\begin{table}[H]
	\centering
	\begin{tabular}{c||c|c} 
		$(\varepsilon_1, \varepsilon_2)$ & $m_2+4\varepsilon_1+6\varepsilon_2+4j$ & $3m_1 - 2m_2 + 2(6p-1)\varepsilon_1 -4(6p-1)j$ \\ \hline \hline
		$(1,0)$ & $m_2+4(j+1)$ & $3m_1-2m_2+6(6p-1) - 4(6p-1)(j+1)$ \\ \hline
		$(0,1)$ & $m_2 + 6 + 4j$ & $3m_1 - 2m_2 -4(6p-1)j$
	\end{tabular}
	\end{table}
	\noindent yields the result. The same calculation works for the second claim.
\end{proof}

The expressions in \cref{Habiro-false} is rewritten as
\[
	H_p^{(k)}(q) = \frac{q^{-\frac{e_k}{24(6p-1)}}}{2\eta(\tau)} \bigg(\widetilde{\Theta}_{\bm{\mu_k}, \bm{c_1}}^{(2)}(\tau) + \widetilde{\Theta}_{\bm{\mu_k}, \bm{c_2}}^{(2)}(\tau) \bigg).
\]
Then the second term
\[
	\frac{q^{-\frac{e_k}{24(6p-1)}}}{2\eta(\tau)} \widetilde{\Theta}_{\bm{\mu_k}, \bm{c_2}}^{(2)}(\tau)
\]
diverges in the vertical limit $\tau \to 1/N$ because of the Dedekind eta function in the denominator. On the other hand, the remaining first term converges in the same limit since the divergence is canceled out by the decay of $\widetilde{\Theta}_{\bm{\mu_k}, \bm{c_1}}^{(2)} (\tau)$. The difference in convergence comes from the difference of $12$ and $4(2p+1)$ in the subscripts of $\Theta^{(1)}$. To confirm it, we recall the modular transformation formula for the Dedekind eta function, 
\begin{align*}
	\eta(\tau+1) &= e^{\frac{\pi i}{12}} \eta(\tau),\\
	\eta\left(-\frac{1}{\tau}\right) &= (-i\tau)^{1/2} \eta(\tau).
\end{align*}
By combining \cref{theta-modular}, for an even $M$, we have
\begin{align}\label{theta/eta}
\begin{split}
	\frac{\theta_{M,\mu}^{(1)}}{\eta} (\tau+1) &= e^{2\pi i \left(\frac{\mu^2}{2M} - \frac{1}{24}\right)} \frac{\theta_{M,\mu}^{(1)}}{\eta} (\tau),\\
	\frac{\theta_{M,\mu}^{(1)}}{\eta} \left(-\frac{1}{\tau}\right) &= \frac{1}{\sqrt{M}} \sum_{\nu=0}^{M-1} e^{2\pi i \frac{\mu \nu}{M}} \frac{\theta_{M,\nu}^{(1)}}{\eta} (\tau).
\end{split}
\end{align}
These transformations imply the following.

\begin{lemma}
	For an even integer $M > 0$ and a positive integer $N > 0$, we have
	\[
		\frac{\theta_{M,\mu}^{(1)}}{\eta} \left(\frac{1}{N} + it\right) = \frac{1}{M} \sum_{\nu'=0}^{M-1} \left(\sum_{\nu=0}^{M-1} e^{2\pi i \frac{(\mu+\nu')\nu}{M}} e^{-2\pi i N \left(\frac{\nu^2}{2M} - \frac{1}{24}\right)} \right) \frac{\theta_{M,\nu'}^{(1)}}{\eta} \left(-\frac{1}{N} + \frac{i}{N^2t} \right).
	\]
\end{lemma}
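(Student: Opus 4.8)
The plan is to realize the point $1/N + it$ as the image of $-1/N + i/(N^2t)$ under a short word in the standard generators $S\colon \tau \mapsto -1/\tau$ and $T\colon \tau\mapsto \tau+1$ of $\mathrm{SL}_2(\Z)$, and then to propagate the transformation formulas \eqref{theta/eta} (together with the $\eta$-transformations $\eta(\tau+1)=e^{\pi i/12}\eta(\tau)$ and $\eta(-1/\tau)=(-i\tau)^{1/2}\eta(\tau)$ recalled just above) step by step along that word. Write $f_\mu(\tau) = \theta_{M,\mu}^{(1)}(\tau)/\eta(\tau)$; note the indices are taken modulo $M$, and $M$ is even so that \eqref{theta/eta} applies.

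First I would record the elementary identity
\[
	S\,T^{-N}\,S\,w = \frac1N + it, \qquad\text{where } w = -\frac1N + \frac{i}{N^2 t}.
\]
This is a one-line check: $Sw = N^2t/(Nt-i)$, then $T^{-N}Sw = Sw - N = Ni/(Nt-i)$, and applying $S$ once more gives $-(Nt-i)/(Ni) = 1/N + it$. (Equivalently, the matrix $\smat{-1 & 0 \\ N & -1}$, which represents $S T^{N} S$ in $\mathrm{PSL}_2(\Z)$, sends $1/N+it$ to $w$.)

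Second I would apply the three relations of \eqref{theta/eta} in order. Setting $\tau := T^{-N}Sw$, the $S$-relation gives
\[
	f_\mu\!\left(\frac1N + it\right) = f_\mu(-1/\tau) = \frac{1}{\sqrt M}\sum_{\nu=0}^{M-1} e^{2\pi i \frac{\mu\nu}{M}}\, f_\nu\!\left(T^{-N}Sw\right).
\]
Iterating the $T$-relation $N$ times with translation by $-N$ gives
\[
	f_\nu\!\left(T^{-N}Sw\right) = e^{-2\pi i N\left(\frac{\nu^2}{2M} - \frac{1}{24}\right)} f_\nu(Sw),
\]
and a second application of the $S$-relation gives
\[
	f_\nu(Sw) = \frac{1}{\sqrt M}\sum_{\nu'=0}^{M-1} e^{2\pi i \frac{\nu\nu'}{M}} f_{\nu'}(w).
\]
Substituting back, using $e^{2\pi i \mu\nu/M}\,e^{2\pi i \nu\nu'/M} = e^{2\pi i(\mu+\nu')\nu/M}$, and interchanging the finite $\nu$- and $\nu'$-sums produces exactly the asserted formula, with $w = -1/N + i/(N^2t)$ as the argument on the right.

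There is essentially no obstacle beyond bookkeeping: the formulas in \eqref{theta/eta} are clean equalities (all roots of unity and square-root factors already absorbed, using that $M$ is even), so composing them three times is unambiguous. The only point requiring care is fixing the word $S\,T^{-N}\,S$ and the sign of the translation correctly, and this is pinned down by the point computation in the first step.
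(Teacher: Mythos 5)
Your proposal is correct and is essentially the paper's own proof: the identity $S\,T^{-N}\,S\,w = 1/N+it$ is exactly the continued fraction decomposition $\frac1N+it = -1/\bigl(-N-1/w\bigr)$ that the paper uses, followed by the same three applications of the transformation formulas in \eqref{theta/eta}. Your write-up just makes the bookkeeping explicit.
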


\begin{proof}
	It immediately follows from the identity
	\[
		\frac{1}{N} + it = -\cfrac{1}{-N-\cfrac{1}{-\cfrac{1}{N} + \cfrac{i}{N^2 t}}}
	\]
	and the transformation formulas in \eqref{theta/eta}.
\end{proof}

This lemma implies that
\begin{align*}
	\frac{\Theta_{M, \mu}^{(1)}}{\eta} \left(\frac{1}{N} + it \right) &= \frac{2}{M} \sum_{\nu'=0}^{M-1} \left( \sum_{\substack{0 \leq \nu \leq M-1 \\ \nu: \text{odd}}} e^{2\pi i \frac{(\mu+\nu') \nu}{M}} e^{-2\pi i N \left(\frac{\nu^2}{2M} - \frac{1}{24}\right)} \right) \frac{\theta_{M,\nu'}^{(1)}}{\eta} \left(-\frac{1}{N} + \frac{i}{N^2t} \right)\\
		&= \frac{2}{M} \sum_{\nu'=0}^{M-1} \left(e^{2\pi i \left(\frac{\mu+\nu'}{M} - \frac{N}{2M} + \frac{N}{24}\right)} \sum_{\nu=0}^{M/2-1} e^{2\pi i \frac{-N \nu^2 + (\mu+\nu'-N)\nu}{M/2}} \right) \frac{\theta_{M, \nu'}^{(1)}}{\eta} \left(-\frac{1}{N} + \frac{i}{N^2t} \right).
\end{align*}
Furthermore, we take $M = 4(2p+1)$ and put $b = \mu + \nu'$. Then the inner sum becomes
\begin{align*}
	\sum_{\nu=0}^{M/2-1} e^{2\pi i \frac{-N \nu^2 + (\mu+\nu'-N)\nu}{M/2}} &= \sum_{\nu=0}^{4p+1} e^{2\pi i \frac{-N (\nu+2p+1)^2 + (b-N)(\nu + 2p+1)}{2(2p+1)}} = (-1)^b \sum_{\nu=0}^{4p+1} e^{2\pi i \frac{-N \nu^2 + (b-N)\nu}{2(2p+1)}}.
\end{align*}
The equation shows that if $b = \mu + \nu'$ is odd, the inner sum equals $0$.

\begin{lemma}\label{theta-eta-limit}
	For a positive integer $p \geq 1$ and an odd $\mu$, we have
	\begin{align*}
		\frac{\Theta_{4(2p+1), \mu}^{(1)}}{\eta} \left(\frac{1}{N} + it \right) &= \frac{1}{2(2p+1)} \sum_{\nu'=0}^{2(2p+1)-1} e^{2\pi i \left(\frac{2\nu'+\mu+1}{4(2p+1)} - \frac{N}{8(2p+1)} + \frac{N}{24}\right)} \\
			&\qquad \times \sum_{\nu=0}^{2(2p+1)-1} e^{2\pi i \frac{-N \nu^2 + (2\nu'+\mu+1-N)\nu}{2(2p+1)}} \frac{\theta_{4(2p+1), 2\nu'+1}^{(1)}}{\eta} \left(-\frac{1}{N} + \frac{i}{N^2t} \right).
	\end{align*}
\end{lemma}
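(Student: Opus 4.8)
The plan is to deduce the claim directly from the identity for $\Theta_{M,\mu}^{(1)}/\eta$ displayed immediately above the statement, specialized to $M = 4(2p+1)$. Concretely, I would begin with
\[
	\frac{\Theta_{M, \mu}^{(1)}}{\eta}\left(\frac{1}{N} + it\right) = \frac{2}{M}\sum_{\nu'=0}^{M-1} e^{2\pi i\left(\frac{\mu+\nu'}{M} - \frac{N}{2M} + \frac{N}{24}\right)} \left(\sum_{\nu=0}^{M/2-1} e^{2\pi i \frac{-N\nu^2 + (\mu+\nu'-N)\nu}{M/2}}\right) \frac{\theta_{M,\nu'}^{(1)}}{\eta}\left(-\frac{1}{N} + \frac{i}{N^2 t}\right),
\]
noting that for $M = 4(2p+1)$ one has $M/2 = 2(2p+1)$ and $2/M = 1/(2(2p+1))$.

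The key step is already carried out in the displayed computation preceding the statement: under the bijective reindexing $\nu \mapsto \nu + (2p+1)$ of $\Z/2(2p+1)\Z$, the inner sum becomes $(-1)^{b}$ times itself, where $b = \mu + \nu'$, so it vanishes whenever $b$ is odd. Since $\mu$ is odd by hypothesis, only the terms with $\nu'$ odd survive in the outer sum over $\nu' \in \{0,\dots,M-1\}$.

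It then remains to substitute $\nu' = 2\nu'' + 1$, with $\nu''$ now ranging over $\{0,1,\dots,M/2-1\} = \{0,\dots,2(2p+1)-1\}$, into the three factors: the exponential prefactor becomes $e^{2\pi i((2\nu''+\mu+1)/(4(2p+1)) - N/(8(2p+1)) + N/24)}$, the inner sum becomes $\sum_{\nu=0}^{2(2p+1)-1} e^{2\pi i(-N\nu^2 + (2\nu''+\mu+1-N)\nu)/(2(2p+1))}$, and $\theta_{M,\nu'}^{(1)}/\eta$ becomes $\theta_{4(2p+1),2\nu''+1}^{(1)}/\eta$. Renaming $\nu''$ back to $\nu'$ and using $2/M = 1/(2(2p+1))$ then produces exactly the asserted identity.

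There is no real obstacle; the argument is purely bookkeeping on top of the transformation formulas \eqref{theta/eta} together with the eta-transformation. The only point worth double-checking is that the reindexing $\nu \mapsto \nu + (2p+1)$ really is a bijection of $\Z/2(2p+1)\Z$ and contributes precisely the sign $(-1)^b$ — which in turn hinges on the elementary congruence $(-1)^{2Np} = 1$ used in the preceding computation. Once this is granted, collecting the surviving terms and relabelling is routine.
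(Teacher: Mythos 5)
Your proof is correct and is essentially identical to the paper's: the lemma is stated there as an immediate consequence of the displayed computation preceding it, namely the vanishing of the inner Gauss-type sum for odd $b=\mu+\nu'$ via the shift $\nu\mapsto\nu+(2p+1)$ on $\Z/2(2p+1)\Z$, followed by relabelling the surviving odd $\nu'$ as $2\nu'+1$. The only cosmetic remark is that the sign contributed by the shift is $e^{2\pi i(b/2-N(p+1))}=(-1)^b$, so the relevant triviality is $e^{-2\pi iN(p+1)}=1$ rather than $(-1)^{2Np}=1$; this does not affect the argument.
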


In the particular case of $p=1$, we obtain the following converging limit formula.

\begin{corollary}\label{THETA-eta-lim}
	For an odd $\mu$, we have
	\[
		\lim_{t \to 0} \frac{\Theta_{12,\mu}^{(1)}}{\eta} \left(\frac{1}{N} + it \right) = \begin{cases}
			1 &\text{if } \mu \equiv 1, 11 \pmod{12},\\
			-1 &\text{if } \mu \equiv 5, 7 \pmod{12},\\
			0 &\text{if otherwise}.
		\end{cases}
	\]
\end{corollary}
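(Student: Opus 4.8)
The plan is to specialise \cref{theta-eta-limit} to $p=1$ (so that $4(2p+1)=12$, $2(2p+1)=6$, and $\frac{N}{8(2p+1)}=\frac N{24}$ cancels the $\frac N{24}$), obtaining
\begin{align*}
	\frac{\Theta_{12,\mu}^{(1)}}{\eta}\left(\frac1N+it\right)=\frac16\sum_{\nu'=0}^{5}e^{2\pi i\frac{2\nu'+\mu+1}{12}}\left(\sum_{\nu=0}^{5}e^{2\pi i\frac{-N\nu^2+(2\nu'+\mu+1-N)\nu}{6}}\right)\frac{\theta_{12,2\nu'+1}^{(1)}}{\eta}\left(-\frac1N+\frac i{N^2t}\right),
\end{align*}
and then to let $t\to0^+$. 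As $t\to0^+$ the argument $-\frac1N+\frac i{N^2t}$ runs up the line $\Re\tau=-1/N$ to $i\infty$, so $q'=e^{2\pi i(-1/N+i/(N^2t))}\to0$. Since $\eta=q'^{1/24}(q';q')_\infty$ at $q=q'$, we have $\theta_{12,\rho}^{(1)}/\eta=q'^{-1/24}(q';q')_\infty^{-1}\sum_{n\equiv\rho\ (12)}q'^{n^2/24}$, whose lowest term is $q'^{(n_0^2-1)/24}$ with $n_0\equiv\rho\pmod{12}$ of least absolute value; for odd $\rho$ this exponent is $0$ precisely when $\rho\equiv\pm1\pmod{12}$. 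Hence $\theta_{12,\rho}^{(1)}/\eta\to1$ if $\rho\equiv\pm1\pmod{12}$ and $\to0$ otherwise, so in the limit only the terms with $2\nu'+1\equiv\pm1\pmod{12}$, i.e. $\nu'=0$ and $\nu'=5$, survive.

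It remains to evaluate the surviving expression $\frac16\big(e^{2\pi i\frac{\mu+1}{12}}S_0+e^{2\pi i\frac{\mu+11}{12}}S_5\big)$, where $S_0$ and $S_5$ are the inner sums at $2\nu'+1=1$ and $11$. Reducing each of the six exponents modulo $6$ and using that $\mu+1$ and $\mu+11$ are even, a short finite computation shows that the $N$-dependent contributions of the two weighted sums enter with opposite signs and cancel; writing $d=(\mu+1)/2$, what is left is
\begin{align*}
	\lim_{t\to0^+}\frac{\Theta_{12,\mu}^{(1)}}{\eta}\left(\frac1N+it\right)=\frac23\left(\cos\frac{\pi d}{3}+\cos\frac{\pi(d-1)}{3}\right).
\end{align*}
Evaluating this according to $d\bmod 6$ gives $1$ for $d\equiv0,1$, $-1$ for $d\equiv3,4$, and $0$ for $d\equiv2,5$, i.e. $1$ for $\mu\equiv1,11$, $-1$ for $\mu\equiv5,7$, and $0$ for $\mu\equiv3,9\pmod{12}$, which is the claim.

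The only delicate point is the bookkeeping in $S_0$ and $S_5$ together with the check that the $N$-dependence disappears; this is a routine finite calculation rather than a genuine obstacle. It is worth noting that there is also a clean self-contained route: by evenness $\theta_{12,\mu}^{(1)}=\theta_{12,-\mu}^{(1)}$, so $2\,\Theta_{12,1}^{(1)}(\tau)=\sum_{n\equiv\pm1\ (12)}q^{n^2/24}-\sum_{n\equiv\pm5\ (12)}q^{n^2/24}$, which by Euler's pentagonal number theorem (see \eqref{Pentagon}) equals $2\,q^{1/24}(q)_\infty=2\eta(\tau)$; combined with $\Theta_{12,\mu}^{(1)}=\Theta_{12,-\mu}^{(1)}=-\Theta_{12,\mu+6}^{(1)}$ and $\Theta_{12,3}^{(1)}\equiv0$ (since $\theta_{12,3}^{(1)}=\theta_{12,9}^{(1)}$), this identifies $\Theta_{12,\mu}^{(1)}/\eta$ as the \emph{constant} function equal to the value in the statement, making the limit immediate.
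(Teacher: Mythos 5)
Your main argument is correct and is essentially the paper's own proof: specialize \cref{theta-eta-limit} to $p=1$, note that in the limit only the terms with $2\nu'+1\equiv\pm1\pmod{12}$ (i.e.\ $\nu'=0,5$) survive because $q'^{-1/24}\theta_{12,2\nu'+1}^{(1)}(\tau')\to 0$ otherwise, and evaluate the remaining finite exponential sums; I checked that the $N$-dependent pieces do cancel (they enter as $(-1)^d$ and $(-1)^{d-1}$) and that your closed form $\tfrac23\bigl(\cos\tfrac{\pi d}{3}+\cos\tfrac{\pi(d-1)}{3}\bigr)$ with $d=(\mu+1)/2$ reproduces the stated case values. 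Your closing remark is in fact a cleaner route than the paper's: by the pentagonal number theorem $\eta=\theta_{12,11}^{(1)}-\theta_{12,5}^{(1)}=\Theta_{12,1}^{(1)}$, so $\Theta_{12,\mu}^{(1)}/\eta$ is identically the constant $1$, $-1$, or $0$ according to $\mu\bmod 12$, and the limit statement becomes trivial with no modular transformation needed.
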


\begin{proof}
	Since $\mu$ is odd, by applying \cref{theta-eta-limit}, we obtain
	\begin{align*}
		\frac{\Theta_{12, \mu}^{(1)}}{\eta} \left(\frac{1}{N} + it \right) &= \frac{1}{6} \sum_{\nu'=0}^{5} e^{2\pi i \left(\frac{2\nu'+\mu+1}{12} \right)} \\
			&\qquad \times \sum_{\nu=0}^{5} e^{2\pi i \frac{-N \nu^2 + (2\nu'+\mu+1-N)\nu}{6}} \frac{\theta_{12, 2\nu'+1}^{(1)}}{\eta} \left(-\frac{1}{N} + \frac{i}{N^2t} \right).
	\end{align*}
	By the definition of the theta function $\theta_{M,\mu}^{(1)}(\tau)$ in \cref{theta-modular}, we see that
	\[
		\lim_{\tau \to i\infty} q^{-1/24} \theta_{12, 2\nu'+1}^{(1)}(\tau) = \begin{cases}
			1 &\text{if } 2\nu'+1 \equiv 1, 11 \pmod{12},\\
			0 &\text{if otherwise}.
		\end{cases}
	\]
	Therefore,
	\begin{align*}
		\lim_{t \to 0} \frac{\Theta_{12, \mu}^{(1)}}{\eta} \left(\frac{1}{N} + it \right) &= \frac{1}{6} e^{2\pi i \frac{\mu+1}{12}} \sum_{\nu=0}^{5} e^{2\pi i \frac{-N \nu^2 + (\mu+1-N)\nu}{6}} + \frac{1}{6} e^{2\pi i \frac{\mu-1}{12}} \sum_{\nu=0}^{5} e^{2\pi i \frac{-N \nu^2 + (\mu-1-N)\nu}{6}}\\
			&= \begin{cases}
				1 &\text{if } \mu \equiv 1, 11 \pmod{12},\\
				-1 &\text{if } \mu \equiv 5, 7 \pmod{12},\\
				0 &\text{if otherwise},
			\end{cases}
	\end{align*}
	which concludes the proof.
\end{proof}

\begin{corollary}\label{Theta2-conv}
	For any pair of odd integers $(m_1, m_2) \in \Z^2$, we put $\bm{\mu} = \frac{1}{2} A^{-1} \smat{m_1 \\ m_2}$. Then we have
	\begin{align*}
		&\lim_{\tau \to 1/N} \frac{\widetilde{\Theta}_{\bm{\mu}, \bm{c_1}}^{(2)} (\tau)}{\eta (\tau)}\\
		&= \widetilde{\theta}_{12(6p-1), 3m_1-2m_2-4(6p-1)j_1}^{(1)} (1/N) - \widetilde{\theta}_{12(6p-1), 3m_1-2m_2-4(6p-1)j_1+6(6p-1)}^{(1)} (1/N)\\
		&\qquad - \widetilde{\theta}_{12(6p-1), 3m_1-2m_2-4(6p-1)j_2}^{(1)} (1/N) + \widetilde{\theta}_{12(6p-1), 3m_1-2m_2-4(6p-1)j_2+6(6p-1)}^{(1)} (1/N),
	\end{align*}
	where we put
	\begin{align*}
		(j_1, j_2) = \begin{cases}
			(0, 1) &\text{if } m_2 \equiv 1 \pmod{12},\\
			(2,1) &\text{if } m_2 \equiv 3 \pmod{12},\\
			(2,0) &\text{if } m_2 \equiv 5 \pmod{12},\\
			(1,0) &\text{if } m_2 \equiv 7 \pmod{12},\\
			(1,2) &\text{if } m_2 \equiv 9 \pmod{12},\\
			(0,2) &\text{if } m_2 \equiv 11 \pmod{12}.
		\end{cases}
	\end{align*}
	Here we put $\widetilde{\theta}_{M, \mu}^{(1)} (1/N) = \lim_{\tau \to 1/N} \widetilde{\theta}_{M, \mu}^{(1)} (\tau).$
\end{corollary}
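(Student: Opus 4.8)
The plan is to feed the false theta decomposition of \cref{THETA-DECOMP} into the one-dimensional limit formulas already at our disposal, and then carry out a short bookkeeping over residues modulo $12$. Set $M = 12(6p-1)$, so that $M/2 = 6(6p-1)$. Dividing the first identity of \cref{THETA-DECOMP} by $\eta(\tau)$ gives
\[
	\frac{\widetilde{\Theta}_{\bm{\mu}, \bm{c_1}}^{(2)}(\tau)}{\eta(\tau)} = \sum_{j=0}^{2} \frac{\Theta_{12,\, m_2+4j}^{(1)}(\tau)}{\eta(\tau)}\, \widetilde{\Theta}_{M,\, 3m_1-2m_2-4(6p-1)j}^{(1)}(\tau).
\]
Since this is a finite sum, I would pass to the vertical limit $\tau \to 1/N$ term by term. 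In each summand the second factor is $\widetilde{\Theta}_{M,\nu}^{(1)} = \widetilde{\theta}_{M,\nu}^{(1)} - \widetilde{\theta}_{M,\nu+M/2}^{(1)}$ with $\nu = 3m_1-2m_2-4(6p-1)j$, and each one-dimensional false theta function $\widetilde{\theta}_{M,\cdot}^{(1)}$ converges radially to a finite value at $1/N$ by \cref{false-limit}; its hypothesis $2\nu \not\equiv 0 \pmod{M}$ is immediate here, since $m_1, m_2$ are odd forces $2\nu \equiv 2 \pmod{4}$ while $4$ divides $M$. The first factor $\Theta_{12,\, m_2+4j}^{(1)}/\eta$ has a limit given explicitly by \cref{THETA-eta-lim}, applicable because $m_2+4j$ is odd.

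The core of the argument is then to identify, for $j=0,1,2$, which of the values $\lim_{t\to 0}\frac{\Theta_{12,\, m_2+4j}^{(1)}}{\eta}(1/N+it)$ are nonzero. As $j$ runs over $\{0,1,2\}$ the residue $m_2+4j \bmod 12$ runs through the three odd residues modulo $12$ congruent to $m_2$ modulo $4$, that is, through $\{1,5,9\}$ when $m_2 \equiv 1 \pmod{4}$ and through $\{3,7,11\}$ when $m_2 \equiv 3 \pmod{4}$. By \cref{THETA-eta-lim} this limit equals $+1$ on residues $\equiv 1,11 \pmod{12}$, equals $-1$ on residues $\equiv 5,7 \pmod{12}$, and equals $0$ on residues $\equiv 3,9 \pmod{12}$; hence in either class exactly one $j$ contributes $+1$ and exactly one contributes $-1$, the remaining one contributing $0$. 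Labelling these $j_1$ (for the $+1$) and $j_2$ (for the $-1$) and running through the six cases for $m_2 \bmod 12$ reproduces the table in the statement.

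Putting the pieces together, the limit equals $\widetilde{\Theta}_{M,\, 3m_1-2m_2-4(6p-1)j_1}^{(1)}(1/N) - \widetilde{\Theta}_{M,\, 3m_1-2m_2-4(6p-1)j_2}^{(1)}(1/N)$, and expanding each $\widetilde{\Theta}_{M,\mu}^{(1)}(1/N) = \widetilde{\theta}_{M,\mu}^{(1)}(1/N) - \widetilde{\theta}_{M,\mu+6(6p-1)}^{(1)}(1/N)$ yields exactly the four-term expression claimed. I expect the only step with any friction to be the residue bookkeeping that pins down $(j_1,j_2)$; everything else is a finite-sum limit interchange together with direct appeals to \cref{THETA-DECOMP}, \cref{false-limit}, and \cref{THETA-eta-lim}.
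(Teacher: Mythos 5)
Your proposal is correct and follows exactly the route the paper takes: the paper's proof is the one-line "It immediately follows from \cref{THETA-DECOMP} and \cref{THETA-eta-lim}," and you have simply made explicit the term-by-term limit interchange, the verification of the hypothesis of \cref{false-limit} (which indeed holds since $3m_1-2m_2-4(6p-1)j$ and its shift by $6(6p-1)$ are odd while $4 \mid 12(6p-1)$), and the residue bookkeeping producing the table for $(j_1,j_2)$, all of which check out.
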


\begin{proof}
	It immediately follows from \cref{THETA-DECOMP} and \cref{THETA-eta-lim}.
\end{proof}

\begin{theorem}\label{main}
	The limit of the first half of the expression of each of five Habiro-type series given in \cref{Habiro-false} converges as $\tau \to 1/N$. More precisely, we have
	\begin{align*}
		\lim_{\tau \to 1/N} \frac{q^{-\frac{e_k}{24(6p-1)}}}{2 \eta(\tau)} \sum_{\bm{\varepsilon} \in \{0,1\}^2} (-1)^{\varepsilon_1 + \varepsilon_2} \widetilde{\theta}_{\bm{\mu_k}+\bm{\varepsilon}, \bm{c_1}}^{(2)} (\tau) 
			&= -\frac{1}{2} \lim_{\tau \to 1/N} q^{-\frac{e_k}{24(6p-1)}} \widetilde{\Phi}_{(2,3,6p-1)}^{(1,1,\ell_k)} (\tau),
	\end{align*}
	where $(\ell_k)_{1 \leq k \leq 5} = (1, p, 2p-1, 2p, 3p-1)$.
\end{theorem}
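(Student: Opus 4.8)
The plan is to reduce the statement to the one-dimensional limit formula \cref{false-limit} on the $\widetilde{\Phi}$-side and to the decomposition \cref{Theta2-conv} on the $H_p^{(k)}$-side, and then to match the two resulting finite sums of values $\widetilde{\theta}_{12(6p-1),\nu}^{(1)}(1/N)$. First I would note that the sum on the left-hand side is by definition $\widetilde{\Theta}_{\bm{\mu_k},\bm{c_1}}^{(2)}(\tau)$ and that the prefactor $q^{-e_k/(24(6p-1))} = e^{2\pi i\tau\,(-e_k/(24(6p-1)))}$ tends, as $\tau = 1/N + it \to 1/N$, to the nonzero constant $e^{-\pi i e_k/(12(6p-1)N)}$; since exactly this prefactor also multiplies $\widetilde{\Phi}_{(2,3,6p-1)}^{(1,1,\ell_k)}(\tau)$ on the right, it cancels and the claim reduces to showing that $\lim_{\tau\to1/N}\widetilde{\Theta}_{\bm{\mu_k},\bm{c_1}}^{(2)}(\tau)/\eta(\tau)$ exists and equals $-\lim_{\tau\to1/N}\widetilde{\Phi}_{(2,3,6p-1)}^{(1,1,\ell_k)}(\tau)$, the latter limit existing by \cref{Hikami-Prop3}. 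Nothing here uses the identification of this expression with $H_p^{(k)}$, so the two cases $p=1$, $k\in\{1,3\}$ excluded from \cref{Habiro-false} need no separate treatment.

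For the left-hand limit I would invoke \cref{Theta2-conv} directly: in each of the five families the pair $(m_1,m_2)$ defining $\bm{\mu_k}$ consists of odd integers---namely $(2p+1,5),(1,1),(2p+3,3),(2p+1,1),(1,3)$ for $k=1,\dots,5$---so the corollary applies, giving both convergence and an explicit $\{0,\pm1,\pm2\}$-combination of four values $\widetilde{\theta}_{12(6p-1),\nu}^{(1)}(1/N)$, where the pair $(j_1,j_2)$ is fixed by $m_2^{(k)}\bmod12$ (equal to $5,1,3,1,3$ respectively). For the right-hand limit I would use \cref{Eichler-false} to write $\widetilde{\Phi}_{(2,3,6p-1)}^{(1,1,\ell_k)}(\tau) = -\tfrac12\sum_{\bm{\varepsilon}\in\{\pm1\}^3}\varepsilon_1\varepsilon_2\varepsilon_3\,\widetilde{\theta}_{12(6p-1),\mu(\bm{\varepsilon})}^{(1)}(\tau)$ with $\mu(\bm{\varepsilon})\equiv 6(6p-1)+3(6p-1)\varepsilon_1+2(6p-1)\varepsilon_2+6\ell_k\varepsilon_3\pmod{12(6p-1)}$; since $2\mu(\bm{\varepsilon})$ is not divisible by $3$ (hence not by $12(6p-1)$), \cref{false-limit} applies termwise, and the involution $\bm{\varepsilon}\mapsto-\bm{\varepsilon}$---which flips $\varepsilon_1\varepsilon_2\varepsilon_3$ and sends $\mu(\bm{\varepsilon})$ to $-\mu(\bm{\varepsilon})\pmod{12(6p-1)}$---combined with $\widetilde{\theta}_{M,-\mu}^{(1)}=-\widetilde{\theta}_{M,\mu}^{(1)}$ folds the eight terms into the four-term sum $\sum_{(\varepsilon_1,\varepsilon_2)\in\{\pm1\}^2}\varepsilon_1\varepsilon_2\,\widetilde{\theta}_{12(6p-1),\,6(6p-1)+3(6p-1)\varepsilon_1+2(6p-1)\varepsilon_2+6\ell_k}^{(1)}(1/N)$, which equals $-\lim_{\tau\to1/N}\widetilde{\Phi}_{(2,3,6p-1)}^{(1,1,\ell_k)}(\tau)$.

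What remains---and this is where the actual work lies---is to verify, for each $k=1,\dots,5$, that the two four-term expressions coincide. I would reduce every residue to a fixed set of representatives modulo $M=12(6p-1)$, use $\widetilde{\theta}_{M,-\mu}^{(1)}=-\widetilde{\theta}_{M,\mu}^{(1)}$ and $M$-periodicity to bring both lists into the same normal form, and check that the two signed multisets agree; the arithmetic facts that make this work are the congruences $3m_1^{(k)}-2m_2^{(k)}\equiv\pm6\ell_k\pmod{6p-1}$ and the matching congruences modulo $12$ controlled by $(j_1,j_2)$. The main obstacle is precisely this bookkeeping: one must keep the $(j_1,j_2)$ assignment of \cref{Theta2-conv} straight, reduce several residues per family while the modulus itself depends on $p$ (so that residues behave differently modulo $12$ according to the parity of $p$), and correctly track the sign flips produced by $\widetilde{\theta}_{M,-\mu}^{(1)}=-\widetilde{\theta}_{M,\mu}^{(1)}$---uniformly in $p$ and $N$.
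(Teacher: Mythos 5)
Your proposal follows essentially the same route as the paper's proof: reduce the left-hand side via \cref{Theta2-conv} to a four-term combination of values $\widetilde{\theta}^{(1)}_{12(6p-1),\nu}(1/N)$, rewrite $\widetilde{\Phi}_{(2,3,6p-1)}^{(1,1,\ell_k)}$ via \cref{Eichler-false} and the $\bm{\varepsilon}\mapsto-\bm{\varepsilon}$ folding into another four-term combination, and match the residues modulo $12(6p-1)$. The case-by-case residue matching you defer as bookkeeping is exactly the content of the two tables in the paper's proof, so your plan is correct and complete in outline.
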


\begin{proof}
	By \cref{Theta2-conv}, the limit on the left-hand side converges. The subscripts of the resulting false theta functions are listed below.
	\begin{table}[H]
	\centering
	\begin{tabular}{c||c|c} 
		$(m_1, m_2)$ & $3m_1 - 2m_2 - 4(6p-1)j_1$ & $3m_1-2m_2-4(6p-1)j_2$ \\ \hline \hline
		$(2p+1,5)$ & $-42p+1$ & $6p-7$ \\ \hline
		$(1,1)$ & $1$ & $-24p+5$ \\ \hline
		$(2p+3,3)$ & $-42p+11$ & $-18p+7$ \\ \hline
		$(2p+1,1)$ & $6p+1$ & $-18p+5$ \\ \hline
		$(1,3)$ & $-48p+5$ & $-24p+1$
	\end{tabular}
	\end{table}
	For $\bm{p} = (2,3,6p-1)$, the values of 
	\[
		\mu(\bm{\varepsilon}, \bm{\ell}) = 6(6p-1) \left(1 + \frac{\varepsilon_1 \ell_1}{2} + \frac{\varepsilon_2 \ell_2}{3}+ \frac{\varepsilon_3 \ell_3}{6p-1} \right) \mod{12(6p-1)}
	\]
	defined in \cref{Eichler-false} coincide with the values in the above list for some $(\bm{\varepsilon}, \bm{\ell})$'s as follows.
	\begin{table}[H]
	\centering
	\begin{tabular}{l||c|c||c|c} 
		$\bm{\ell} = (\ell_1, \ell_2, \ell_3)$ & $\bm{\varepsilon}$ & $\mu(\bm{\varepsilon}, \bm{\ell})$ & $\bm{\varepsilon}$ & $\mu(\bm{\varepsilon}, \bm{\ell})$ \\ \hline \hline
		$\bm{\ell_1} = (1,1,1)$ & $(-1,1,-1)$ & $-42p+1$ & $(-1,-1,-1)$ & $6p-7$ \\ \hline
		$\bm{\ell_2} = (1,1,p)$ & $(1,1,1)$ & $1$ & $(1,-1,1)$ & $-24p+5$ \\ \hline
		$\bm{\ell_3} = (1,1,2p-1)$ & $(1,-1,-1)$ & $-42p+11$ & $(1,1,-1)$ & $-18p+7$ \\ \hline
		$\bm{\ell_4} = (1,1,2p)$ & $(1,1,1)$ & $6p+1$ & $(1,-1,1)$ & $-18p+5$ \\ \hline
		$\bm{\ell_5} = (1,1,3p-1)$ & $(-1,-1,1)$ & $-48p+5$ & $(-1,1,1)$ & $-24p+1$
	\end{tabular}
	\end{table}
	By the relations $\mu(-\bm{\varepsilon}, \bm{\ell}) \equiv -\mu(\bm{\varepsilon}, \bm{\ell})$ and $\mu((-\varepsilon_1, \varepsilon_2, \varepsilon_3), \bm{\ell}) \equiv \mu((\varepsilon_1, \varepsilon_2, \varepsilon_3), \bm{\ell}) + 6(6p-1)$ for $\ell_1 = 1$, we have
	\begin{align*}
		\widetilde{\Phi}_{\bm{p}}^{\bm{\ell_1}}(\tau) &= \widetilde{\theta}_{12(6p-1), 6p-7}^{(1)} (\tau) - \widetilde{\theta}_{12(6p-1), -42p+1}^{(1)} (\tau)\\
			&\qquad - \widetilde{\theta}_{12(6p-1), 6p-7+6(6p-1)}^{(1)} (\tau) + \widetilde{\theta}_{12(6p-1), -42p+1+6(6p-1)}^{(1)} (\tau),
	\end{align*}
	which implies that
	\[
		\lim_{\tau \to 1/N} \frac{\widetilde{\Theta}_{\bm{\mu_1}, \bm{c_1}}^{(2)}(\tau)}{\eta(\tau)} = - \lim_{\tau \to 1/N} \widetilde{\Phi}_{\bm{p}}^{\bm{\ell_1}}(\tau).
	\]
	The same calculation works for the remaining four cases.
\end{proof}

As for the second half of the expressions of the Habiro-type series, the corresponding limit
\[
	\lim_{t \to 0} \frac{\Theta_{4(2p+1), m_1+4j}^{(1)}}{\eta} \left(\frac{1}{N} + it\right)
\]
to \cref{THETA-eta-lim} diverges in general because of the Dedekind eta function in the denominator. In other words, the limit $\lim_{q \to e^{2\pi i/N}} H_p^{(k)}(q)$ from within the unit disc diverges in general.

\subsection{Hikami's question, revisited}

In \cite[Concluding remarks]{Hikami2007}, Hikami left the question on the modular transformation theory of the Hecke-type series expression of the Habiro-type series as a future study. Hikami's question locates in the counterpart of the transformation theory of the indefinite-theta function expressions of Ramanujan's mock theta functions developed by Zwegers~\cite{Zwegers2002}. As mentioned at the beginning of \cref{s3}, the work of Bringmann--Nazaroglu~\cite{BringmannNazaroglu2019} on the transformation theory of false theta functions is one answer to this question. In this last subsection, we will briefly review it.

We now consider the false theta function 
\[
	\widetilde{\theta}_{\bm{\mu}, \bm{c}}^{(2)}(\tau) = \sum_{\bm{n} \in L + \bm{\mu}} \sgn(B(\bm{n}, \bm{c})) q^{Q(\bm{n})}
\]
for the general setting in \cref{def-false-theta} with $\mathrm{rank} L = 2$. Recalling the definition of $Q_{\bm{c}}(\bm{x})$ defined in \cref{heihoukansei} and the expression
\[
	\widetilde{\theta}_{\bm{\mu}, \bm{c}}^{(2)}(\tau) = \sum_{\bm{n} \in L + \bm{\mu}} \sgn(B(\bm{n}, \bm{c})) q^{\frac{1}{2} B(\bm{n}, \bm{c})^2} q^{Q_{\bm{c}}(\bm{n})},
\] 
we introduce the function
\[
	f_{\tau, z}(\bm{x}) = B(\bm{x}, \bm{c}) e^{\pi i z B(\bm{x}, \bm{c})^2} e^{2\pi i \tau Q_{\bm{c}}(\bm{x})} = B(\bm{x}, \bm{c}) e^{2\pi i Q(\bm{x}) \tau} e^{\pi i (z-\tau) B(\bm{x}, \bm{c})^2}
\]
for $\tau, z \in \bbH$ and $\bm{x} \in \R^2$.

\begin{lemma}
	We have
	\[
		\mathcal{F}(f_{\tau, z}) (\bm{x}) = \frac{-i (-i\tau)^{-1/2} (-iz)^{-3/2}}{\sqrt{\det A}} f_{-\frac{1}{\tau}, -\frac{1}{z}} (\bm{x}),
	\]
	where
	\[
		\mathcal{F}(f) (\bm{x}) = \int_{\R^2} f(\bm{y}) e^{-2\pi i B(\bm{x}, \bm{y})} d\bm{y}
	\]
	is the Fourier transform of $f$.
\end{lemma}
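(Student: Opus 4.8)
The plan is to recognize $f_{\tau,z}$ as a Gaussian with a linear prefactor and to apply the standard complex Gaussian Fourier integral. Put $\bm{v}=A\bm{c}$, so that $B(\bm{x},\bm{c})=\bm{c}^TA\bm{x}=\bm{v}^T\bm{x}$, $2Q(\bm{x})=\bm{x}^TA\bm{x}$ and $B(\bm{x},\bm{c})^2=\bm{x}^T\bm{v}\bm{v}^T\bm{x}$; hence
\[
	f_{\tau,z}(\bm{y})=(\bm{v}^T\bm{y})\,e^{\pi i\,\bm{y}^T\Omega\bm{y}},\qquad \Omega:=\tau A+(z-\tau)\,\bm{v}\bm{v}^T .
\]
The two identities $\bm{v}^TA^{-1}\bm{v}=\bm{c}^TA\bm{c}=2Q(\bm{c})=1$ and $A^{-1}\bm{v}=\bm{c}$ are what will make the matrix algebra collapse. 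Also, since the Fourier transform in the statement uses the pairing $B(\bm{x},\bm{y})=\bm{x}^TA\bm{y}$, it is the ordinary Fourier transform $\widehat{g}(\bm{\xi})=\int_{\R^2}g(\bm{y})e^{-2\pi i\bm{\xi}^T\bm{y}}\,d\bm{y}$ evaluated at $\bm{\xi}=A\bm{x}$.

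Next I would invoke the Gaussian formula: for a symmetric $M$ whose real part is positive definite,
\[
	\int_{\R^2}(\bm{v}^T\bm{y})\,e^{-\pi\bm{y}^TM\bm{y}}e^{-2\pi i\bm{\xi}^T\bm{y}}\,d\bm{y}=(\det M)^{-1/2}\,(\bm{v}^TM^{-1}\bm{\xi})\,e^{-\pi\bm{\xi}^TM^{-1}\bm{\xi}},
\]
obtained from the scalar identity $\int e^{-\pi\bm{y}^TM\bm{y}-2\pi i\bm{\xi}^T\bm{y}}\,d\bm{y}=(\det M)^{-1/2}e^{-\pi\bm{\xi}^TM^{-1}\bm{\xi}}$ by differentiating once under the integral sign in $\bm{\xi}$. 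Here $M=-i\Omega$. For the convergence hypothesis, writing $\bm{y}=t\bm{c}+\bm{y}'$ with $B(\bm{y}',\bm{c})=0$ gives $\bm{y}^T\Omega\bm{y}=2\tau Q_{\bm{c}}(\bm{y})+z\,B(\bm{y},\bm{c})^2$ (using $Q_{\bm{c}}(\bm{y})=Q(\bm{y}')$ and $Q(\bm{y})=Q_{\bm{c}}(\bm{y})+\tfrac12 B(\bm{y},\bm{c})^2$), hence $\bm{y}^T\operatorname{Re}(-i\Omega)\bm{y}=2\operatorname{Im}(\tau)Q_{\bm{c}}(\bm{y})+\operatorname{Im}(z)B(\bm{y},\bm{c})^2$, which is positive definite for $\tau,z\in\bbH$ because $Q$ is.

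Then I would compute $\det\Omega$ and $\Omega^{-1}$ by the matrix determinant lemma and the Sherman--Morrison formula applied to the rank-one perturbation $\Omega=\tau A+(z-\tau)\bm{v}\bm{v}^T$ of $\tau A$: using $\bm{v}^TA^{-1}\bm{v}=1$ and $A^{-1}\bm{v}=\bm{c}$,
\[
	\det\Omega=\det(\tau A)\Bigl(1+\tfrac{z-\tau}{\tau}\Bigr)=\tau z\det A,\qquad \Omega^{-1}=\tfrac1\tau A^{-1}-\tfrac{z-\tau}{\tau z}\,\bm{c}\bm{c}^T .
\]
Thus $\det M=\det(-i\Omega)=-\tau z\det A=(-i\tau)(-iz)\det A$, so $(\det M)^{-1/2}=(\det A)^{-1/2}(-i\tau)^{-1/2}(-iz)^{-1/2}$ for a consistent branch choice. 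With $\bm{\xi}=A\bm{x}$, the relations $\bm{v}^TA^{-1}=\bm{c}^T$ and $\bm{v}^T\bm{c}=1$ give $\bm{v}^T\Omega^{-1}A\bm{x}=\tfrac1z\,\bm{c}^TA\bm{x}=\tfrac1z\,B(\bm{x},\bm{c})$, and $(A\bm{x})^T\Omega^{-1}(A\bm{x})=\tfrac{2}{\tau}Q(\bm{x})-\tfrac{z-\tau}{\tau z}B(\bm{x},\bm{c})^2$; multiplying the latter by $-\pi i$ reproduces exactly the exponent of $f_{-1/\tau,-1/z}(\bm{x})$. Collecting everything, $\mathcal{F}(f_{\tau,z})(\bm{x})=(\det A)^{-1/2}(-i\tau)^{-1/2}(-iz)^{-1/2}\cdot\tfrac1z\cdot f_{-1/\tau,-1/z}(\bm{x})$, and since $\tfrac1z=\tfrac{-i}{-iz}$ the scalar equals $\dfrac{-i\,(-i\tau)^{-1/2}(-iz)^{-3/2}}{\sqrt{\det A}}$, which is the assertion.

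I expect the only non-routine points to be (i) a clean justification of the complex Gaussian identity and of differentiating under the integral sign, which I would handle by analytic continuation from $\tau,z\in i\R_{>0}$ (where $M=-i\Omega$ is real and positive definite), both sides being holomorphic in $(\tau,z)\in\bbH^2$ with uniform local bounds; and (ii) keeping the branches of $(-i\tau)^{1/2}$, $(-iz)^{1/2}$ and $(\det M)^{1/2}$ mutually consistent so that the leading $-i$ comes out with the correct sign. The branch is pinned down once and for all by testing $\tau=z=i$, where $M=A$, all square roots are positive, $-1/i=i$, and a direct evaluation gives $\mathcal{F}(f_{i,i})(\bm{x})=-i(\det A)^{-1/2}f_{i,i}(\bm{x})$, matching the claimed constant.
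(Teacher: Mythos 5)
Your proof is correct, and the final constant matches the lemma, but the route differs from the paper's in execution. The paper introduces a matrix $C$ with $A = CC^T$ and $C^T\bm{c} = \smat{1\\0}$, changes variables $\bm{y}\mapsto C^T\bm{y}$, and thereby splits the integral into a product of two one-dimensional Gaussian integrals (one carrying the linear factor $y_1$, which produces the $(-iz)^{-3/2}$, and one plain, producing the $(-i\tau)^{-1/2}$). You instead keep the two-dimensional complex Gaussian intact, package the exponent as $\Omega = \tau A + (z-\tau)\bm{v}\bm{v}^T$ with $\bm{v}=A\bm{c}$, and extract $\det\Omega = \tau z\det A$ and $\Omega^{-1} = \tfrac1\tau A^{-1}-\tfrac{z-\tau}{\tau z}\bm{c}\bm{c}^T$ from the rank-one update formulas; the identities $\bm{v}^TA^{-1}\bm{v}=2Q(\bm{c})=1$ and $A^{-1}\bm{v}=\bm{c}$ play exactly the role that the normalization $C^T\bm{c}=\smat{1\\0}$ plays in the paper. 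Your version has the advantage of making the convergence hypothesis ($\Re(-i\Omega)$ positive definite for $\tau,z\in\bbH$) and the branch bookkeeping explicit, which the paper leaves implicit; the paper's version is shorter because the separation of variables reduces everything to two standard one-variable integrals. One slip to fix: your displayed identity
\[
	\int_{\R^2}(\bm{v}^T\bm{y})\,e^{-\pi\bm{y}^TM\bm{y}}e^{-2\pi i\bm{\xi}^T\bm{y}}\,d\bm{y}=(\det M)^{-1/2}\,(\bm{v}^TM^{-1}\bm{\xi})\,e^{-\pi\bm{\xi}^TM^{-1}\bm{\xi}}
\]
is missing a factor $-i$ on the right (check it in one variable: $\int y\,e^{-\pi y^2-2\pi i\xi y}\,dy=-i\xi e^{-\pi\xi^2}$). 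This does not propagate: in your final collection you use $\bm{v}^T\Omega^{-1}\bm{\xi}=\tfrac1z B(\bm{x},\bm{c})$ rather than $\bm{v}^TM^{-1}\bm{\xi}=\tfrac{i}{z}B(\bm{x},\bm{c})$, and the two omissions cancel, as your sanity check at $\tau=z=i$ confirms; still, the intermediate formula should be corrected for the argument to read consistently.
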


\begin{proof}
	The idea of the proof is based on Bringmann--Nazaroglu~\cite{BringmannNazaroglu2019}. By the definition,
	\begin{align*}
		\mathcal{F}(f_{\tau, z}) (\bm{x}) = \int_{\R^2} B(\bm{y}, \bm{c}) e^{2\pi i Q(\bm{y})\tau} e^{\pi i (z- \tau) B(\bm{y}, \bm{c})^2} e^{-2\pi i B(\bm{x}, \bm{y})} d\bm{y}.
	\end{align*}
	Let $A = \smat{a & b \\ b & d}$ and $\bm{c} = \smat{c_1 \\ c_2}$, we put
	\[
		C= \frac{1}{\sqrt{2Q(\bm{c})}} \pmat{a c_1 + b c_2 & c_2 \sqrt{\det A} \\ b c_1 + d c_2 & -c_1 \sqrt{\det A}}.
	\]
	Then we have $A = C C^T$ and $C^T \bm{c} = \smat{\sqrt{2Q(\bm{c})} \\ 0}$. Since $2Q(\bm{c}) = 1$, we have $C^T \bm{c} = \smat{1 \\ 0}$. By putting $\smat{y_1 \\ y_2} = C^T \bm{y}$ and $\smat{x_1 \\ x_2} = C^T \bm{x}$,
	\begin{align*}
		\mathcal{F}(f_{\tau, z}) (\bm{x}) &= \int_{\R^2} y_1 e^{\pi i (y_1^2 + y_2^2) \tau} e^{\pi i (z- \tau) y_1^2} e^{-2\pi i (x_1 y_1 + x_2 y_2)} \frac{dy_1 dy_2}{\sqrt{\det A}}\\
			&= \frac{1}{\sqrt{\det A}}\int_{-\infty}^\infty y_1 e^{\pi i y_1^2 z - 2\pi i x_1 y_1} dy_1 \int_{-\infty}^\infty e^{\pi i y_2^2 \tau - 2\pi i x_2 y_2} dy_2.
	\end{align*}
	Each integral is well-known and is equal to
	\begin{align*}
			&= \frac{-i}{\sqrt{\det A}} (-i \tau)^{-1/2} (-i z)^{-3/2} x_1 e^{\pi i x_1^2 \frac{-1}{z}}  e^{\pi i x_2^2 \frac{-1}{\tau}}\\
			&= \frac{-i}{\sqrt{\det A}} (-i \tau)^{-1/2} (-i z)^{-3/2} B(\bm{x}, \bm{c}) e^{2\pi i Q(\bm{x}) \frac{-1}{\tau}} e^{\pi i (\frac{-1}{z} - \frac{-1}{\tau}) B(\bm{x}, \bm{c})^2},
	\end{align*}
	which equals the right-hand side of the desired equation.
\end{proof}

\begin{lemma}\label{modular-g}
	We define the bivariate theta function $g_{\bm{\mu}, \bm{c}} (\tau,z)$ by
	\[
		g_{\bm{\mu}, \bm{c}} (\tau,z) = \sum_{\bm{n} \in L + \bm{\mu}} f_{\tau, z}(\bm{n}) = \sum_{\bm{n} \in L + \bm{\mu}} B(\bm{n}, \bm{c}) e^{\pi i z B(\bm{n}, \bm{c})^2} q^{Q_{\bm{c}}(\bm{n})}.
	\]
	Then we have
	\[
		g_{\bm{\mu}, \bm{c}} \left(-\frac{1}{\tau}, -\frac{1}{z}\right) = \frac{-i (-i\tau)^{1/2} (-iz)^{3/2}}{\vol(\R^2/L) \sqrt{\det A}} \sum_{\bm{\nu} \in L^*/L} e^{2\pi i B(\bm{\nu}, \bm{\mu})} g_{\bm{\nu}, \bm{c}} (\tau, z).
	\]
\end{lemma}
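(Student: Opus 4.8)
The plan is to apply Poisson summation over the lattice coset $L+\bm\mu$ and then substitute the Fourier-transform identity just established. First I would note that $f_{\tau,z}$ is a Schwartz function: written as $f_{\tau,z}(\bm x)=B(\bm x,\bm c)\,e^{\pi i z B(\bm x,\bm c)^2}e^{2\pi i\tau Q_{\bm c}(\bm x)}$ its modulus is $|B(\bm x,\bm c)|\,e^{-\pi\Im(z)B(\bm x,\bm c)^2-2\pi\Im(\tau)Q_{\bm c}(\bm x)}$, and since $\tfrac12 B(\bm x,\bm c)^2+Q_{\bm c}(\bm x)=Q(\bm x)$ is positive definite while $B(\bm x,\bm c)^2$ and $Q_{\bm c}(\bm x)$ are separately nonnegative, this (and all of its derivatives) decays like a Gaussian in every direction for $\tau,z\in\bbH$. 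Hence Poisson summation applies in the form
\[
	\sum_{\bm n\in L+\bm\mu}h(\bm n)=\frac{1}{\vol(\R^2/L)}\sum_{\bm\nu\in L^*}\mathcal F(h)(\bm\nu)\,e^{2\pi i B(\bm\nu,\bm\mu)},
\]
with the pairing inside $\mathcal F$ and in the exponential being the bilinear form $B$. This is the ordinary Poisson summation formula transported through the linear change of variables that turns $B$ into the standard inner product, namely $\bm x\mapsto C^T\bm x$ with $C$ as in the previous proof (so $A=CC^T$ and $B(\bm x,\bm y)=(C^T\bm x)\cdot(C^T\bm y)$): this change of variables sends $L$ to a lattice whose Euclidean dual corresponds to $L^*$, and the factor $\sqrt{\det A}$ from the covolume of the transformed lattice cancels the factor $\sqrt{\det A}$ relating $\mathcal F$ to the standard Fourier transform, leaving only the Lebesgue covolume $\vol(\R^2/L)$. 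Applying this with $h=f_{-1/\tau,-1/z}$,
\[
	g_{\bm\mu,\bm c}\!\left(-\tfrac1\tau,-\tfrac1z\right)=\frac{1}{\vol(\R^2/L)}\sum_{\bm\nu\in L^*}\mathcal F\bigl(f_{-1/\tau,-1/z}\bigr)(\bm\nu)\,e^{2\pi i B(\bm\nu,\bm\mu)}.
\]

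Next I would evaluate $\mathcal F(f_{-1/\tau,-1/z})$ from the Fourier-transform lemma with $(\tau,z)$ replaced by $(-1/\tau,-1/z)$; since $-1/(-1/\tau)=\tau$ this gives $\mathcal F(f_{-1/\tau,-1/z})=c\,f_{\tau,z}$ with $c=\frac{-i\,(-i(-1/\tau))^{-1/2}(-i(-1/z))^{-3/2}}{\sqrt{\det A}}$. The one delicate point is the branch of the roots: for $\tau\in\bbH$ both $-i(-1/\tau)$ and $-i\tau$ lie in the open right half-plane, so their principal square roots each have argument in $(-\pi/4,\pi/4)$; the product of these square roots therefore lies in the open right half-plane, and since it squares to $(-i(-1/\tau))(-i\tau)=1$ it must equal $1$. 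Hence $(-i(-1/\tau))^{-1/2}=(-i\tau)^{1/2}$, and likewise $(-i(-1/z))^{-3/2}=(-iz)^{3/2}$, so that $c=\frac{-i\,(-i\tau)^{1/2}(-iz)^{3/2}}{\sqrt{\det A}}$ and
\[
	g_{\bm\mu,\bm c}\!\left(-\tfrac1\tau,-\tfrac1z\right)=\frac{-i\,(-i\tau)^{1/2}(-iz)^{3/2}}{\vol(\R^2/L)\sqrt{\det A}}\sum_{\bm\nu\in L^*}f_{\tau,z}(\bm\nu)\,e^{2\pi i B(\bm\nu,\bm\mu)}.
\]

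Finally I would collapse the sum over $L^*$ into a sum over the finite group $L^*/L$. Writing $\bm\nu=\bm\nu_0+\bm\lambda$ with $\bm\nu_0$ ranging over representatives of $L^*/L$ and $\bm\lambda\in L$, the key observation is that $B(\bm\lambda,\bm\mu)\in\Z$ for all $\bm\lambda\in L$, $\bm\mu\in L^*$ by the very definition of $L^*$, so $e^{2\pi i B(\bm\nu,\bm\mu)}=e^{2\pi i B(\bm\nu_0,\bm\mu)}$ depends only on the class of $\bm\nu_0$; the inner sum $\sum_{\bm\lambda\in L}f_{\tau,z}(\bm\nu_0+\bm\lambda)$ is then exactly $g_{\bm\nu_0,\bm c}(\tau,z)$, and collecting constants yields the claimed formula. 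The only real obstacle is bookkeeping: one must carry the bilinear form $B$ (in place of the Euclidean inner product) consistently through Poisson summation, so that $L^*$ and $\vol(\R^2/L)$ — and not the standard dual lattice, nor any stray $\sqrt{\det A}$ — appear at the Poisson step; once the change of variables $\bm x\mapsto C^T\bm x$ is set up this is routine, and the $\sqrt{\det A}$ in the final answer enters solely through the Fourier-transform lemma.
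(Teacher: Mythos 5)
Your proof is correct and follows the same route as the paper, which simply invokes Poisson summation in the $B$-twisted form $\vol(\R^2/L)\sum_{\bm{n}\in L}f(\bm{n}+\bm{x})=\sum_{\bm{n}\in L^*}\mathcal{F}(f)(\bm{n})e^{2\pi iB(\bm{n},\bm{x})}$ and leaves the remaining steps implicit. You have merely filled in the details the paper omits (the normalization check through $\bm{x}\mapsto C^T\bm{x}$, the branch of $(-i\tau)^{1/2}$, and the collapse of the $L^*$-sum to $L^*/L$), all of which are handled correctly.
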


\begin{proof}
	It follows from Poisson's summation formula
	\[
		\vol(\R^2/L) \sum_{\bm{n} \in L} f(\bm{n}+\bm{x}) = \sum_{\bm{n} \in L^*} \mathcal{F}(f)(\bm{n}) e^{2\pi i B(\bm{n}, \bm{x})}.
	\]
\end{proof}

\begin{lemma}
	\[
		\widetilde{\theta}_{\bm{\mu}, \bm{c}}^{(2)} (\tau) = -i \int_\tau^{i\infty} \frac{g_{\bm{\mu}, \bm{c}}(\tau, z)}{\sqrt{-i(z-\tau)}} dz.
	\]
\end{lemma}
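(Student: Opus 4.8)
The plan is to unfold the definition of $g_{\bm{\mu},\bm{c}}(\tau,z)$ inside the integral on the right, interchange the sum over $L+\bm{\mu}$ with the $z$-integral, and thereby reduce the claimed identity to a single elementary one-dimensional integral for each lattice point. Abbreviating $\alpha=\alpha(\bm{n})=B(\bm{n},\bm{c})\in\R$ and recalling from \cref{heihoukansei} that $Q_{\bm{c}}(\bm{n})=Q(\bm{n})-\tfrac12\alpha^2$, the right-hand side becomes
\[
	-i\sum_{\bm{n}\in L+\bm{\mu}}\alpha\,q^{Q_{\bm{c}}(\bm{n})}\int_\tau^{i\infty}\frac{e^{\pi i z\alpha^2}}{\sqrt{-i(z-\tau)}}\,dz .
\]
I would take the contour to be the vertical ray $z=\tau+it$, $t\in[0,\infty)$; there $-i(z-\tau)=t\ge0$, so the principal branch gives $\sqrt{-i(z-\tau)}=\sqrt t$, $dz=i\,dt$, and $e^{\pi i z\alpha^2}=e^{\pi i\tau\alpha^2}e^{-\pi t\alpha^2}$. (Since the integrand is holomorphic in $z$ on $\{\Im z>\Im\tau\}$ and decays at $i\infty$, the value does not depend on this choice.)

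The key step is then the standard Gaussian evaluation: for $\alpha\neq0$,
\[
	-i\,\alpha\int_0^\infty\frac{e^{\pi i\tau\alpha^2}e^{-\pi t\alpha^2}}{\sqrt t}\,i\,dt
	=\alpha\,e^{\pi i\tau\alpha^2}\int_0^\infty\frac{e^{-\pi t\alpha^2}}{\sqrt t}\,dt
	=\alpha\,e^{\pi i\tau\alpha^2}\,\frac{\Gamma(1/2)}{\sqrt{\pi\alpha^2}}
	=\sgn(\alpha)\,e^{\pi i\tau\alpha^2},
\]
whereas for $\alpha=0$ both the prefactor $\alpha$ and $\sgn(\alpha)$ vanish, so the corresponding per-lattice-point identity is trivially true. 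Multiplying through by $q^{Q_{\bm{c}}(\bm{n})}$ and using $q^{Q_{\bm{c}}(\bm{n})}e^{\pi i\tau\alpha^2}=q^{Q_{\bm{c}}(\bm{n})+\frac12\alpha^2}=q^{Q(\bm{n})}$, the $\bm{n}$-th summand turns into exactly $\sgn(B(\bm{n},\bm{c}))\,q^{Q(\bm{n})}$, and summing over $\bm{n}\in L+\bm{\mu}$ recovers $\widetilde{\theta}_{\bm{\mu},\bm{c}}^{(2)}(\tau)$ as in \eqref{2-false-def}.

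The only point needing care, and the one I expect to be the main obstacle, is the legitimacy of exchanging the sum and the integral, which I would settle by Fubini--Tonelli. On the ray $z=\tau+it$ one has, using $\alpha^2+2Q_{\bm{c}}(\bm{n})=2Q(\bm{n})$ and $Q_{\bm{c}}\ge0$ (the latter holding since $Q$ is positive definite with $2Q(\bm{c})=1$),
\[
	\bigl|\alpha\,q^{Q_{\bm{c}}(\bm{n})}e^{\pi i z\alpha^2}\bigr|
	=|\alpha|\,e^{-\pi(\Im(\tau)+t)\alpha^2}e^{-2\pi\Im(\tau)Q_{\bm{c}}(\bm{n})}
	=|\alpha|\,e^{-\pi t\alpha^2}\,e^{-2\pi\Im(\tau)Q(\bm{n})} .
\]
Splitting the $t$-integral at $t=1$, and bounding $\int_0^1 t^{-1/2}e^{-\pi t\alpha^2}\,dt\le2$ and $\int_1^\infty t^{-1/2}e^{-\pi t\alpha^2}\,dt\le\int_0^\infty t^{-1/2}e^{-\pi t\alpha^2}\,dt=|\alpha|^{-1}$ for $\alpha\neq0$ (the $\alpha=0$ summand being identically zero), together with $|\alpha|=|B(\bm{n},\bm{c})|\le C\sqrt{Q(\bm{n})}$, gives
\[
	\sum_{\bm{n}\in L+\bm{\mu}}\int_0^\infty\frac{\bigl|\alpha\,q^{Q_{\bm{c}}(\bm{n})}e^{\pi i z\alpha^2}\bigr|}{\sqrt t}\,dt
	\le2\sum_{\bm{n}}|\alpha|\,e^{-2\pi\Im(\tau)Q(\bm{n})}+\sum_{\bm{n}}e^{-2\pi\Im(\tau)Q(\bm{n})}<\infty ,
\]
since $Q$ is positive definite. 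Hence the double sum-integral is absolutely convergent, the interchange is valid, and all the series involved converge. Beyond this routine bookkeeping I anticipate no difficulty: the whole computation is the rank-$2$ case of the argument of Bringmann--Nazaroglu~\cite{BringmannNazaroglu2019}, and the two lemmas preceding this statement (on $\mathcal{F}(f_{\tau,z})$ and \cref{modular-g}) are used only for the modular transformation discussion, not for this identity.
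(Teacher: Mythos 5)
Your proposal is correct and follows essentially the same route as the paper: parametrize the contour as $z=\tau+it$, evaluate the elementary Gaussian integral $\int_0^\infty t^{-1/2}e^{-\pi a^2 t}\,dt=|a|^{-1}$ termwise, and recombine $q^{Q_{\bm{c}}(\bm{n})}e^{\pi i\tau B(\bm{n},\bm{c})^2}=q^{Q(\bm{n})}$. The only difference is that you spell out the Fubini--Tonelli justification for interchanging sum and integral, which the paper leaves implicit.
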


\begin{proof}
	Since
	\[
		\int_\tau^{i \infty} \frac{e^{\pi i a^2 z} dz}{\sqrt{-i(z-\tau)}} = ie^{\pi i a^2 \tau} \int_0^\infty \frac{e^{-\pi a^2t} dt}{\sqrt{t}} = \frac{i e^{\pi i a^2 \tau}}{|a|},
	\]
	we have
	\begin{align*}
		\widetilde{\theta}_{\bm{\mu}, \bm{c}}^{(2)} (\tau) &= -i \sum_{\substack{\bm{n} \in L + \bm{\mu} \\ B(\bm{n}, \bm{c}) \neq 0}} B(\bm{n}, \bm{c}) \frac{i e^{\pi i B(\bm{n}, \bm{c})^2 \tau}}{|B(\bm{n}, \bm{c})|} q^{Q_{\bm{c}}(\bm{n})}\\
			&= -i \sum_{\bm{n} \in L + \bm{\mu}} B(\bm{n}, \bm{c}) \int_\tau^{i\infty} \frac{e^{\pi i z B(\bm{n}, \bm{c})^2} dz}{\sqrt{-i(z-\tau)}} q^{Q_{\bm{c}}(\bm{n})},
	\end{align*}
	which finishes the proof.
\end{proof}

The above integral expression and the modular transformation of $g_{\bm{\mu}, \bm{c}}(\tau,z)$ yield the following $S$-transformation formula.

\begin{theorem}\label{S-trans-ft2}
	We assume that $\Re(\tau) \neq 0$. Then we have
	\begin{align*}
		(-i \tau)^{-1} \widetilde{\theta}_{\bm{\mu}, \bm{c}}^{(2)} \left(-\frac{1}{\tau}\right) = -\frac{i \sgn(\Re(\tau))}{\vol(\R^2/L) \sqrt{\det A}} \sum_{\bm{\nu} \in L^*/L} e^{2\pi i B(\bm{\mu}, \bm{\nu})} \int_0^\tau \frac{g_{\bm{\nu}, \bm{c}} (\tau, z)}{\sqrt{-i(z-\tau)}} dz.
	\end{align*}
\end{theorem}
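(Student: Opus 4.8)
The plan is to deduce the $S$-transformation from the Eichler-type integral representation of $\widetilde{\theta}_{\bm{\mu},\bm{c}}^{(2)}$ established just above, together with the modular transformation of the auxiliary bivariate theta function $g_{\bm{\mu},\bm{c}}$ in \cref{modular-g}. First I would apply that integral representation at $-1/\tau$:
\[
	\widetilde{\theta}_{\bm{\mu},\bm{c}}^{(2)}\Bigl(-\tfrac{1}{\tau}\Bigr) = -i\int_{-1/\tau}^{i\infty}\frac{g_{\bm{\mu},\bm{c}}(-1/\tau,z)}{\sqrt{-i(z+1/\tau)}}\,dz,
\]
where the contour is the vertical ray $z = -1/\tau + it$, $t\ge 0$, along which $-i(z+1/\tau) = t \ge 0$, so the square root is the genuine nonnegative one. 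Then substitute $z = -1/w$, $dz = dw/w^{2}$. Since $z\mapsto -1/z$ preserves $\bbH$, the new path is an arc inside $\bbH$ from $w=\tau$ to $w=0$, and reversing its orientation contributes a sign. Using $z+1/\tau = (w-\tau)/(\tau w)$, the integral becomes an integral along that arc of $g_{\bm{\mu},\bm{c}}(-1/\tau,-1/w)\cdot(-i(w-\tau)/(\tau w))^{-1/2}\cdot w^{-2}$.

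Next I would insert \cref{modular-g}, rewriting $g_{\bm{\mu},\bm{c}}(-1/\tau,-1/w)$ as $\frac{-i(-i\tau)^{1/2}(-iw)^{3/2}}{\vol(\R^{2}/L)\sqrt{\det A}}\sum_{\bm{\nu}\in L^{*}/L}e^{2\pi i B(\bm{\nu},\bm{\mu})}g_{\bm{\nu},\bm{c}}(\tau,w)$, and then collect the $w$-dependent automorphy factors. On the arc $w\in\bbH$, so $(-iw)^{3/2}$ is the principal branch; the powers of $w$ from $(-iw)^{3/2}$, from the Jacobian $w^{-2}$, and from splitting off a $(\tau w)^{1/2}$ out of $(-i(w-\tau)/(\tau w))^{-1/2}$ combine to $w^{0}$, leaving an integrand of the shape $g_{\bm{\nu},\bm{c}}(\tau,w)\,(-i(w-\tau))^{-1/2}$ times a constant depending only on $\tau$. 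Tracking that constant, the factor $(-i\tau)^{1/2}$ from \cref{modular-g} together with the $\tau^{1/2}$ just split off is exactly what is cancelled against the prefactor $(-i\tau)^{-1}$ in the statement (recall $(-i\tau)^{1/2} = (-i)^{1/2}\tau^{1/2}$ for $\tau\in\bbH$), and the remaining powers of $-i$ collapse to a single $-i$. Since $B(\bm{\nu},\bm{\mu}) = B(\bm{\mu},\bm{\nu})$ by symmetry of $B$, it then remains only to replace the integral over the arc by an integral from $0$ to $\tau$; this is legitimate by Cauchy's theorem — $g_{\bm{\nu},\bm{c}}(\tau,\cdot)$ being holomorphic on $\bbH$ and the relevant branch of the square root being continued consistently along the deformation — and the convergence at the endpoint $0$ is inherited from that of the original integral at $i\infty$ through the change of variables.

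The step I expect to be the main obstacle is the branch bookkeeping for the square root. Along the vertical $z$-ray, $\sqrt{-i(z+1/\tau)}$ is the nonnegative square root of a nonnegative real; but after the substitution one is forced to factor $(-i(w-\tau)/(\tau w))^{-1/2}$ into a $(-i(w-\tau))^{-1/2}$-type factor and a $(\tau w)^{1/2}$-type factor with \emph{principal} branches, and these do not multiply naively because $w-\tau$ can lie in the lower half-plane (the arc leaves $\tau$ heading downward). Resolving the resulting global $\pm1$ by comparing $\arg(w-\tau)$ with $\arg\tau$ and $\arg w$ along the arc — observing that the sign is constant in $w$ for $\tau$ fixed — is precisely what produces the factor $\sgn(\Re\tau)$ in the statement, and explains the hypothesis $\Re\tau\neq 0$. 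Once this sign is pinned down, the remaining steps are the same formal rearrangements as in the one-dimensional analogue \cref{S-trans-ft1} and in Bringmann--Nazaroglu's treatment.
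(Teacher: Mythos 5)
Your proposal is correct and follows essentially the same route as the paper: apply the Eichler-type integral representation at $-1/\tau$, substitute $z=-1/z'$, invoke \cref{modular-g}, and resolve the square-root branch to produce the factor $\sgn(\Re(\tau))$ (the paper records this as $\sqrt{i(z-\tau)}=-i\sgn(\Re(\tau))\sqrt{-i(z-\tau)}$ on the segment from $0$ to $\tau$). Your extra attention to deforming the image arc onto the straight segment via Cauchy's theorem is a welcome refinement of a step the paper leaves implicit, but it is not a different argument.
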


\begin{proof}
	By changing a variable via $z = -1/z'$, we have
	\begin{align*}
		(-i \tau)^{-1} \widetilde{\theta}_{\bm{\mu}, \bm{c}}^{(2)} \left(-\frac{1}{\tau}\right) = -i(-i\tau)^{-1} \int_0^\tau \frac{g_{\bm{\mu}, \bm{c}} \left(-\frac{1}{\tau}, -\frac{1}{z'} \right) }{\sqrt{-i \left(-\frac{1}{z'}-\frac{-1}{\tau}\right)}} \frac{dz'}{(-iz')^2}.
	\end{align*}
	By \cref{modular-g}, it becomes
	\[
		= -\frac{1}{\vol(\R^2/L) \sqrt{\det A}} \sum_{\bm{\nu} \in L^*/L} e^{2\pi iB(\bm{\nu}, \bm{\mu})} \int_0^\tau \frac{g_{\bm{\nu}, \bm{c}}(\tau, z)}{\sqrt{i(z-\tau)}} dz.
	\]
	Since $\sqrt{i(z-\tau)} = -i \sgn(\Re(\tau)) \sqrt{-i(z-\tau)}$ holds for $z \in \bbH$ on the line segment connecting $0$ and $\tau$, we have the desired result.
\end{proof}

By adding
\begin{align*}
	&\frac{\sgn(\Re(\tau))}{\vol(\R^2/L) \sqrt{\det A}} \sum_{\bm{\nu} \in L^*/L} e^{2\pi i B(\bm{\mu}, \bm{\nu})} \widetilde{\theta}_{\bm{\nu}, \bm{c}}^{(2)}(\tau)\\
	&= -\frac{i \sgn(\Re(\tau))}{\vol(\R^2/L) \sqrt{\det A}} \sum_{\bm{\nu} \in L^*/L} e^{2\pi i B(\bm{\mu}, \bm{\nu})} \int_\tau^{i\infty} \frac{g_{\bm{\nu}, \bm{c}} (\tau, z)}{\sqrt{-i(z-\tau)}} dz
\end{align*}
to the both sides of \cref{S-trans-ft2}, we also obtain the following. 

\begin{corollary}
	We assume that $\Re(\tau) \neq 0$. Then we have
	\begin{align*}
		&(-i \tau)^{-1} \widetilde{\theta}_{\bm{\mu}, \bm{c}}^{(2)} \left(-\frac{1}{\tau}\right) + \frac{\sgn(\Re(\tau))}{\vol(\R^2/L) \sqrt{\det A}} \sum_{\bm{\nu} \in L^*/L} e^{2\pi i B(\bm{\mu}, \bm{\nu})} \widetilde{\theta}_{\bm{\nu}, \bm{c}}^{(2)} (\tau)\\
			&= -\frac{i \sgn(\Re(\tau))}{\vol(\R^2/L) \sqrt{\det A}} \sum_{\bm{\nu} \in L^*/L} e^{2\pi i B(\bm{\mu}, \bm{\nu})} \int_0^{i\infty} \frac{g_{\bm{\nu}, \bm{c}} (\tau, z)}{\sqrt{-i(z-\tau)}} dz,
	\end{align*}
	where the integration path avoids the branch cut defined by $\sqrt{-i(z-\tau)}$, that is, $\{z = \tau - iu \in \C \mid u>0\}$.
\end{corollary}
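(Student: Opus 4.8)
The plan is to start from the $S$-transformation of \cref{S-trans-ft2} and to ``complete'' the integral on its right-hand side from $\int_0^\tau$ to $\int_0^{i\infty}$ by adding to both sides a copy of the integral representation $\widetilde{\theta}_{\bm{\mu}, \bm{c}}^{(2)}(\tau) = -i\int_\tau^{i\infty} g_{\bm{\mu}, \bm{c}}(\tau, z)/\sqrt{-i(z-\tau)}\,dz$ proved in the lemma just above. Concretely, I would apply that integral representation to $\widetilde{\theta}_{\bm{\nu}, \bm{c}}^{(2)}(\tau)$ for each $\bm{\nu} \in L^*/L$, multiply by $e^{2\pi i B(\bm{\mu}, \bm{\nu})}$, sum over $\bm{\nu}$, and multiply through by $\sgn(\Re(\tau))/(\vol(\R^2/L)\sqrt{\det A})$. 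This produces exactly the auxiliary identity displayed immediately before the corollary, whose right-hand side is $-i\,\sgn(\Re(\tau))/(\vol(\R^2/L)\sqrt{\det A}) \sum_{\bm{\nu}} e^{2\pi i B(\bm{\mu}, \bm{\nu})} \int_\tau^{i\infty} g_{\bm{\nu}, \bm{c}}(\tau, z)/\sqrt{-i(z-\tau)}\,dz$.

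Adding this auxiliary identity to \cref{S-trans-ft2} term by term, the left-hand sides combine into $(-i\tau)^{-1} \widetilde{\theta}_{\bm{\mu}, \bm{c}}^{(2)}(-1/\tau) + \sgn(\Re(\tau))/(\vol(\R^2/L)\sqrt{\det A}) \sum_{\bm{\nu}} e^{2\pi i B(\bm{\mu}, \bm{\nu})} \widetilde{\theta}_{\bm{\nu}, \bm{c}}^{(2)}(\tau)$, which is precisely the left-hand side of the corollary. On the right, both terms carry the common prefactor $-i\,\sgn(\Re(\tau))/(\vol(\R^2/L)\sqrt{\det A})$ and the same integrand $g_{\bm{\nu}, \bm{c}}(\tau, z)/\sqrt{-i(z-\tau)}$, so the two integrals $\int_0^\tau$ (from \cref{S-trans-ft2}) and $\int_\tau^{i\infty}$ (from the auxiliary identity) concatenate into a single integral from $0$ to $i\infty$ along the broken path $0 \to \tau \to i\infty$, yielding the right-hand side of the corollary.

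The only point requiring care — and the main (mild) obstacle — is making sense of the notation $\int_0^{i\infty}$ and fixing the branch of $\sqrt{-i(z-\tau)}$ consistently along the two pieces. The integrand $z \mapsto g_{\bm{\nu}, \bm{c}}(\tau, z)/\sqrt{-i(z-\tau)}$ is holomorphic in $z$ on $\bbH$ away from the branch cut $\{z = \tau - iu \mid u > 0\}$, has only an integrable $(z-\tau)^{-1/2}$ singularity at the branch point $z = \tau$, and decays exponentially as $z \to i\infty$ because of the factor $e^{\pi i z B(\bm{n}, \bm{c})^2}$ in $g_{\bm{\nu}, \bm{c}}$ (the terms with $B(\bm{n}, \bm{c}) = 0$ contribute nothing, being killed by the linear prefactor), while convergence of the integral near $z = 0$ is already part of \cref{S-trans-ft2}. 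Since $\Re(\tau) \neq 0$, the part of the branch cut lying in $\overline{\bbH}$ is the segment joining $\tau$ to the real point $\Re(\tau)$, so $\overline{\bbH}$ slit along this segment is simply connected; by Cauchy's theorem together with the exponential decay at $i\infty$, the integral is then independent of the path from $0$ to $i\infty$ chosen within the slit region, and in particular the broken path $0 \to \tau \to i\infty$ — interpreted near the branch point $z = \tau$ as the limit of admissible paths making a small detour, which is harmless since the singularity there is integrable — computes this common value. This gives the stated formula; everything else is the formal addition indicated above, so no further input is needed.
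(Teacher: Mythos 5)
Your proposal is correct and follows exactly the paper's route: the displayed identity immediately preceding the corollary is precisely the sum over $\bm{\nu}$ of the integral representation $\widetilde{\theta}_{\bm{\nu}, \bm{c}}^{(2)}(\tau) = -i\int_\tau^{i\infty} g_{\bm{\nu}, \bm{c}}(\tau, z)/\sqrt{-i(z-\tau)}\,dz$ with the appropriate phases and prefactor, and adding it to \cref{S-trans-ft2} concatenates $\int_0^\tau$ with $\int_\tau^{i\infty}$ into $\int_0^{i\infty}$ along a path avoiding the branch cut. Your extra remarks on branch consistency, the integrable singularity at $z=\tau$, and path-independence in the slit half-plane only make explicit what the paper leaves to the phrase ``the integration path avoids the branch cut.''
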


If a shape similar to \cref{S-trans-ft1} is desired, we can re-apply \cref{modular-g} to the right-hand side.

\section*{Acknowledgements}
The author would like to express his sincere gratitude to Kazuhiro Hikami for his introduction to the theory of quantum invariants in a series of lectures and seminars. The author is also grateful to Yuya Murakami, Shin-ichiro Seki, and Shoma Sugimoto for continuous helpful communication. The work was supported by JSPS KAKENHI Grant Number JP20K14292 and JP21K18141.

\bibliographystyle{amsplain}
\bibliography{References}

\end{document}